\newcommand{\ie}{{\it i.e. }}
\newcommand{\cf}{{\it cf. }}
\newcommand{\eg}{{\it e.g. }}
\newcommand{\loccit}{{\it loc. cit. }}
\newcommand{\resp}{{\it resp. }}
\newcommand{\A}{\mathbf{A}}
\renewcommand{\c}{\operatorname{C}}
\newcommand{\h}{\operatorname{h}}
\newcommand{\uR}{\underline{R}}
\newcommand{\uS}{\underline{S}}
\newcommand{\C}{\place/{\h}}
\newcommand{\D}{\Sm^{\proj}(F)/R}
\newcommand{\N}{\mathbf{N}}
\renewcommand{\P}{\mathbf{P}}
\newcommand{\Q}{\mathbf{Q}}
\newcommand{\R}{\mathbf{R}}
\newcommand{\Z}{\mathbf{Z}}
\newcommand{\sA}{\mathcal{A}}
\newcommand{\sC}{\mathcal{C}}
\newcommand{\sD}{\mathcal{D}}
\newcommand{\sE}{\mathcal{E}}
\newcommand{\sF}{\mathcal{F}}
\newcommand{\sG}{\mathcal{G}}
\newcommand{\sH}{\mathcal{H}}
\newcommand{\sI}{\mathcal{I}}
\newcommand{\sO}{\mathcal{O}}
\newcommand{\sP}{\mathcal{P}}
\newcommand{\sR}{\mathcal{R}}
\newcommand{\sS}{\mathcal{S}}
\newcommand{\bG}{\mathbb{G}}
\newcommand{\fm}{\mathfrak{m}}
\newcommand{\fp}{\mathfrak{p}}
\newcommand{\Spec}{\operatorname{Spec}}
\newcommand{\Ker}{\operatorname{Ker}}
\newcommand{\IM}{\operatorname{Im}}
\newcommand{\Bl}{\operatorname{Bl}}
\newcommand{\place}{\operatorname{\mathbf{place}}}
\newcommand{\dv}{\operatorname{\mathbf{dv}}}
\newcommand{\field}{\operatorname{\mathbf{field}}}
\newcommand{\pro}[1][]{\operatorname{pro}_{#1}\hbox{--}}
\newcommand{\Rat}{\operatorname{\mathbf{Rat}}}
\newcommand{\Supp}{\operatorname{Supp}}
\newcommand{\Map}{\operatorname{Map}}
\newcommand{\Hom}{\operatorname{Hom}}
\newcommand{\Sets}{\operatorname{\mathbf{Sets}}}
\newcommand{\trdeg}{\operatorname{trdeg}}
\newcommand{\Exc}{{\operatorname{Fund}}}
\newcommand{\Fund}{{\operatorname{Exc}}}
\newcommand{\Cat}{\operatorname{\mathbf{Cat}}}
\newcommand{\car}{\operatorname{char}}
\newcommand{\dom}{{\operatorname{dom}}}
\newcommand{\an}{{\operatorname{an}}}
\newcommand{\reg}{{\operatorname{reg}}}
\newcommand{\Funct}{\operatorname{\mathbf{Funct}}}
\newcommand{\VarP}{\operatorname{\mathbf{VarP}}}
\newcommand{\Norm}{\operatorname{\mathbf{Norm}}}
\newcommand{\SmP}{\operatorname{\mathbf{SmP}}}
\newcommand{\Sm}{{\operatorname{\mathbf{Sm}}}}
\newcommand{\Sch}{\operatorname{\mathbf{Sch}}}
\newcommand{\Set}{\operatorname{\mathbf{Set}}}
\newcommand{\proper}{{\operatorname{prop}}}
\newcommand{\qp}{{\operatorname{qp}}}
\newcommand{\proj}{{\operatorname{proj}}}
\newcommand{\Intsep}{\operatorname{\mathbf{Var}}}
\newcommand{\Var}{\operatorname{\mathbf{Var}}}
\newcommand{\op}{{\text{\rm op}}}
\newcommand{\ess}{{\text{\rm ess}}}
\newcommand{\ttto}{\dashrightarrow}
\newcommand{\tto}{\rightsquigarrow}
\newcommand{\To}{\longrightarrow}
\newcommand{\rr}{\operatornamewithlimits{\rightrightarrows}\limits}
\newcommand{\by}[1]{\overset{#1}{\longrightarrow}}
\newcommand{\yb}[1]{\overset{#1}{\longleftarrow}}
\newcommand{\iso}{\by{\sim}}
\newcommand{\osi}{\yb{\sim}}
\newcommand{\inj}{\hookrightarrow}
\newcommand{\Inj}{\lhook\joinrel\longrightarrow}
\newcommand{\surj}{\rightarrow\!\!\!\!\!\rightarrow}
\newcommand{\Surj}{\relbar\joinrel\surj} 
\newcommand{\colim}{\varinjlim}
\renewcommand{\lim}{\varprojlim}
\newcommand{\codim}{\operatorname{codim}}
\newcommand{\prf}{\noindent{\bf Proof. }}
\renewcommand{\qed}{\hfill $\Box$\medskip}
\renewcommand{\phi}{\varphi}
\renewcommand{\epsilon}{\varepsilon}
\newcounter{spec}
\newenvironment{thlist}{\begin{list}{\rm{(\roman{spec})}}%
{\usecounter{spec}\labelwidth=20pt\itemindent=0pt\labelsep=10pt}}%
{\end{list}}%
\newtheorem{Th}{Theorem}
\newtheorem{Conj}{Conjecture}
\newtheorem{thm}{Theorem}[subsection]
\newtheorem{lemma}[thm]{Lemma}
\newtheorem{prop}[thm]{Proposition}
\newtheorem{cor}[thm]{Corollary}
\newtheorem{thmr}[thm]{*Theorem}
\newtheorem{lemmar}[thm]{*Lemma}
\newtheorem{propr}[thm]{*Proposition}
\newtheorem{corr}[thm]{*Corollary}
\theoremstyle{definition}
\newtheorem{defn}[thm]{Definition}
\newtheorem{nota}[thm]{Notation}
\theoremstyle{remark}
\newtheorem{caution}[thm]{Caution}
\newtheorem{rk}[thm]{Remark}
\newtheorem{rks}[thm]{Remarks}
\newtheorem{qn}[thm]{Question}
\newtheorem{ex}[thm]{Example}
\newtheorem{exs}[thm]{Examples}
\newtheorem{exo}[thm]{Exercise}
\numberwithin{equation}{section}
\begin{document}

\title{Birational geometry and localisation of categories}
\author{Bruno Kahn}
\address{IMJ-PRG\\ Case 247\\4 place Jussieu\\
75252 Paris Cedex 05\\France}
\email{bruno.kahn@imj-prg.fr}
\author{R. Sujatha}
\address{University of British Columbia\\Vancouver, BC
V6T1Z2\\Canada}
\email{sujatha@math.ubc.ca}
\date{October 2,  2014}
\thanks{The first author acknowledges the support of Agence Nationale de la Recherche (ANR) under reference ANR-12-BL01-0005 and the second author  that of an NSERC Grant. Both authors acknowledge the support of CEFIPRA project 2501-1.}

\keywords{Localisation, birational geometry, places, R-equivalence.}

\subjclass[2010]{14E05, 18F99}

\begin{abstract} We explore connections 
between places of function fields over a base field $F$ and birational morphisms between smooth $F$-varieties.  This is done by 
considering various categories of fractions involving function fields or 
varieties as objects, and constructing functors between these categories. The main result is that in the localised category 
$S_b^{-1}\Sm(F)$, where $\Sm(F)$ denotes the usual 
category of smooth varieties over $F$ and $S_b$ is the set of birational 
morphisms, the set of morphisms between two objects $X$ and $Y$, with $Y$ proper, is the 
set of $R$-equivalence classes $Y(F(X))/R$. 
\end{abstract}

\maketitle

\hfill With appendices by Jean-Louis Colliot-Th\'el\`ene and Ofer Gabber

\tableofcontents

\section*{Introduction}

Let $\Phi$ be a functor from the category of smooth proper varieties over a field $F$ to the category of sets. We say that $\Phi$ is \emph{birational}
if it transforms birational morphisms into isomorphisms. In characteristic $0$, examples of such functors
are obtained by choosing a function field $K/F$ and defining $\Phi_K(X)=X(K)/R$, the set of
$R$-equivalence classes of $K$-rational points \cite[Prop. 10]{ct-s}. One of the main results of
this paper is that \emph{any birational functor $\Phi$ is canonically a direct limit of functors
of the form $\Phi_K$}.

This follows from Theorem
\ref{T1} below via the complement to Yoneda's lemma (\cite[Exp. I, Prop. 3.4 p. 19]{SGA4} or
\cite[Ch. III, \S, Th. 1 p. 76]{maclane}). Here is the philosophy which led to this
result and others presented here:

Birational geometry over a field $F$ is the study of function fields over $F$, viewed as generic
points of algebraic varieties\footnote{By convention all varieties are irreducible here,
although not necessarily geometrically irreducible.}, or alternately the study of algebraic
$F$-varieties ``up to proper closed subsets". In this context, two ideas seem related:

\begin{itemize}
\item places between function fields;
\item rational maps.
\end{itemize}

The main motivation of this paper has been to understand the precise relationship between
them. We have done this by defining two rather different ``birational categories" and
comparing them.

The first idea gives the category $\place$ (objects: function fields; morphisms: $F$-places),
that we like to call the
\emph{coarse birational category}. For the second idea, one has to be a
little careful: the na\"{\i}ve attempt at taking as objects smooth
varieties and as morphisms rational maps does not work because,
as was pointed out to us by H\'el\`ene Esnault, one
cannot compose rational maps in general. On the other hand, one can
certainly start from the category $\Sm$ of smooth $F$-varieties and \emph{localise} it (in
the sense of Gabriel-Zisman 
\cite{gz}) with respect to the set $S_b$ of birational morphisms. We like to call the resulting category $S_b^{-1}\Sm$ the \emph{fine birational category}. 
By hindsight, the problem mentioned by Esnault can be
understood as a problem of calculus of fractions of $S_b$ in $\Sm$.

In spite of the lack of calculus of fractions, the category $S_b^{-1}\Sm$ was studied in
\cite{loc} and we were able to show that, under resolution of singularities, the natural
functor $S_b^{-1}\Sm^\proper\to S_b^{-1}\Sm$ is an equivalence of categories, where
$\Sm^\proper$ denotes the full subcategory of smooth proper varieties (\loccit, Prop. 8.5).

What was not done in \cite{loc} was the computation of Hom sets in $S_b^{-1}\Sm$. This is the
first main result of this paper:

\begin{Th}[\cf Th. \protect{\ref{t6.1} and Cor. \ref{c5.1}}]\label{T1}  Let $X,Y$ be two smooth $F$-varieties, with $Y$ proper. Then,\\
a) In
$S_b^{-1}\Sm$, we have an isomorphism
\[\Hom(X,Y) \simeq Y(F(X))/R\]
where the right hand side is the set of $R$-equivalence classes in the sense of Manin.\\
b) The natural functor
\[S_b^{-1} \Sm_*^\proper \to S_b^{-1} \Sm \]
is fully faithful. Here $\Sm_*^\proper$ is the full subcategory of $\Sm$ with objects those smooth proper varieties whose function field has a cofinal set of smooth proper models (see Definition \ref{d3.1}).
\end{Th}

\enlargethispage*{20pt}

For the link with the result mentioned at the beginning of the introduction, note that $\Sm_*^\proper=\Sm^\proper$ in characteristic $0$, and any
birational functor on smooth proper varieties factors uniquely through
$S_b^{-1}\Sm^\proper$, by the universal property of the latter category. 

Theorem \ref{T1} implies that $X\mapsto X(F)/R$ is a birational invariant of smooth proper varieties in any characteristic (Cor. \ref{c6.1}), a fact which seemed to be known previously only in characteristic $0$ \cite[Prop. 10]{ct-s}. It also implies that one can
define a composition law on classes of
$R$-equivalence (for smooth proper varieties), a fact which is not at all obvious a priori.

The second main result is a comparison between the coarse and fine birational categories. Let $\dv$ be the subcategory of $\place$ whose objects are separably generated function fields and morphisms are generated by field extensions and places associated to ``good'' discrete valuation rings (Definition \ref{d-good}).

\begin{Th}[\cf Th. \protect{\ref{t4.1} and \ref{tcor-loc}}]\label{T2} 
a) There is an equivalence of categories
\[\overline\Psi:(\dv/\h')^\op\iso S_b^{-1}\Sm\]
where $\dv/\h'$ is the quotient category of $\dv$ by the equivalence relation generated by two elementary relations: homotopy of places (definition \ref{deq1}) and ``having a common centre of codimension $2$ on some smooth model''.\\
b) If $\car F=0$, the natural functor $\dv/h'\to \place/\h''$ is an equivalence of categories, where $\h''$ is generated by homotopy of places and ``having a common centre on some smooth model".
\end{Th}

(See \S \ref{s1.2} for the notion of an equivalence relation on a category.)

Put together, Theorems \ref{T1} and \ref{T2} provide an answer to a question of Merkurjev: given a smooth proper variety $X/F$, give a purely birational description of the  set  $X(F)/R$. This answer is rather clumsy because the equivalence relation $\h'$ is not easy to handle; we hope to come back to this issue later.

\enlargethispage*{20pt}

Let us introduce the set $S_r$ of \emph{stable birational morphisms}: by definition, a morphism $s:X\to Y$ is in $S_r$ if it is dominant and the function field extension $F(X)/F(Y)$ is purely transcendental. We wondered about the nature of the localisation functor $S_b^{-1}\Sm\allowbreak\to S_r^{-1}\Sm$ for a long time, until the answer was given us by Colliot-Th\'el\`ene through a wonderfully simple geometric argument (see Appendix \ref{colliot}): 

\begin{Th}[\cf Th. \protect{\ref{sb=sr}}]\label{T3} 
 The functor $S_b^{-1}\Sm\to S_r^{-1}\Sm$ is an equivalence of categories.
\end{Th}

This shows a striking difference between birational functors and numerical birational invariants,  many of which are not stably birationally invariant (for example, plurigenera).

Theorems \ref{T1} and \ref{T2} are substantial improvements of our results in the first version of this paper \cite{Birat0}, which were proven only in characteristic $0$: even in characteristic $0$, Theorem \ref{T2} is new with respect to \cite{Birat0}. Their proofs are intertwined in a way we shall describe now.

The first point is to relate the coarse and fine birational categories, as 
there is no obvious comparison functor between them. There are two essentially different approaches to this question. In the first one:

\begin{itemize} 
\item We 
introduce (Definition
\ref{d1}) an
``incidence category" $\SmP$, whose objects are  smooth 
$F$-varieties and morphisms from $X$ to $Y$ are given by pairs $(f,\lambda)$,
where $f$ is a morphism $X\to Y$, $\lambda$ is a place $F(Y)\tto F(X)$ and
$f,\lambda$ are \emph{compatible} in an obvious sense. This category maps to both $\place^\op$
and $\Sm$ by obvious forgetful functors. Replacing $\Sm$ by $\SmP$ turns out to have a strong rigidifying effect. 
\item We embed $\place^\op$ in the category of locally ringed spaces via the ``Riemann-Zariski'' variety attached to a function field.
\end{itemize}

In this way, we obtain a naturally commutative diagram
 \[
\xymatrix{
&S_b^{-1}\Sm^\proper_* \P\ar[dr]^{\overline \Phi_2^*}\ar[dl]_{\overline \Phi_1^*}\\
\place_*^\op\ar[dr]^{\bar \Sigma} && S_b^{-1}\Sm^\proper_*\ar[dl]_{\bar J}\\
&S_b^{-1}\widehat{\Sm^\proper_*}
}
\]
where $\place_*$ denotes the full subcategory of $\place$ consisting of the function fields of varieties in $\Sm^\proper_*$ (compare Theorem \ref{T1}). Then  $J$ is an \emph{equivalence of categories}\footnote{So is $\overline\Phi_1^*$.} and the induced functor
\begin{equation}\label{psi*}
\Psi_*: \place_*^\op \to S_b^{-1}\Sm_*^{\proper}\tag{*}
\end{equation}
is \emph{full and essentially surjective} (Theorems \ref{t3.2} and \ref{cplsm}).

This is more or less where we were in the first version of this paper \cite{Birat0}, except for the use of the categories $\Sm_*$ and $\place_*$ which allow us to state results in any characteristic; in \cite{Birat0}, we also proved Theorem \ref{T1} when $\car F=0$, using resolution of singularities and a complicated categorical method.\footnote{Another way to prove Theorem \ref{T1} in characteristic $0$, which was our initial method, is to define a composition law on $R$-equivalence classes by brute force  (still using resolution of singularities) and to proceed as in the proof of Proposition \ref{leq1}.} 

The second approach is to construct a functor $\dv^\op\to S_b^{-1}\Sm$ directly. Here the new and decisive input is the recent paper of Asok and Morel \cite{a-m}, and especially the results of its \S 6: they got the insight that, working with discrete valuations of rank $1$, all the resolution that is needed  is ``in codimension $2$''. We implement their method in \S \ref{HR} of the present paper, which leads to a rather simple proof of Theorems \ref{T1} and \ref{T2} in any characteristic. Another key input is a recent uniformisation theorem of Knaf and Kuhlmann \cite{knaf-kuhlmann}.

Let us now describe the contents in more detail. We start by setting up notation in Section \ref{s1}, which ends with Theorem \ref{T3}.  
In Section
\ref{s3}, we introduce the incidence category $\SmP$ sitting in the larger category
$\VarP$, the forgetful functors $\VarP\to \Var$ and $\VarP\to \place^\op$, and
prove elementary results on these functors (see Lemmas \ref{l6} and \ref{l2bis}). 
In Section \ref{pvr}, 
 we endow the abstract Riemann variety with the structure of a locally ringed space, and 
prove that it is a cofiltered inverse limit of proper models, viewed as schemes (Theorem \ref{plim}): this ought to be well-known but we couldn't find
a reference. 
We apply these results to construct in \S\ref{s4} the functor \eqref{psi*}, using calculus of fractions. In section \ref{eqcat}, we study calculus of fractions in greater generality; in particular, we obtain a partial calculus of fractions in $S_b^{-1}\Sm_*$ in Proposition \ref{p3.4}. 

In \S\ref{HR}, we introduce a notion of homotopy on $\place$ and  the subcategory $\dv$.  We then relate our approach to the work of Asok-Morel
\cite{a-m} to prove Theorems \ref{T1} and \ref{T2}. We make the link between the first and second approaches in Theorem \ref{tcor-loc} = Theorem \ref{T2} b).

Section \ref{sbr} discusses variants of Koll\'ar's notion of rational chain connectedness (which goes back to Chow under the name of linear connectedness), recalls classical theorems of Murre, Chow and van der Waerden, states new theorems of Gabber including the one proven in Appendix \ref{gabber}, and draws some consequences in Theorem \ref{c3.5}. Section \ref{s7} discusses some applications, among which we like to mention the existence of a ``universal birational quotient'' of the fundamental group of a smooth variety admitting a smooth compactfication (\S \ref{kan}). We finish with a few open questions in \S \ref{s7.7}.

This paper grew out of the preprint \cite{birat}, where some of its results were initially
proven. We decided that
the best was to separate the present results, which have little to do with motives, from the
rest of that work.  Let us end with a word on the relationship between $S_b^{-1}\Sm$ and the $\A^1$-homotopy category of schemes $\sH$ of Morel-Voevodsky \cite{mv}.  One of the main results of Asok and Morel in \cite{a-m} is a proof of the following conjecture of  Morel in the proper case (\loccit Th. 2.4.3):

\begin{Conj}[\protect{\cite[p. 386]{morel}}] If $X$ is a smooth variety, the natural map
\[X(F)\to \Hom_{\sH}(\Spec F,X)\]
is surjective and identifies the right hand side with the quotient of the set $X(F)$ by the
equivalence relation generated by
\[(x\sim y) \iff \exists h:\A^1\to X\mid h(0) = x \text{ and } h(1)=y.\]
\end{Conj}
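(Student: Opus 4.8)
The plan is to reduce the conjecture to a statement about the sheaf of $\A^1$-connected components. By the very construction of $\sH(F)$, for a smooth variety $X$ regarded as a constant simplicial Nisnevich sheaf one has $\Hom_{\sH(F)}(\Spec F,X)\simeq \pi_0^{\A^1}(X)(F)$, where $\pi_0^{\A^1}(X)$ denotes the Nisnevich sheaf of $\A^1$-connected components of an $\A^1$-fibrant replacement $L_{\A^1}X$. So everything comes down to comparing $\pi_0^{\A^1}(X)(F)$ with the \emph{naive} quotient $X(F)/\!\sim$, where $\sim$ is the equivalence relation generated by elementary $\A^1$-homotopies $h\colon\A^1\to X$.

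First I would introduce the naive presheaf $\pi_0^{N}(X)\colon U\mapsto X(U)/\!\sim_U$, together with the tautological surjection $X\twoheadrightarrow \pi_0^N(X)$ and the comparison map $\pi_0^N(X)\to\pi_0^{\A^1}(X)$. Since $X(F)\twoheadrightarrow\pi_0^N(X)(F)$ holds by definition, the whole conjecture follows once one proves that this comparison map is a bijection on $F$-points. The surjectivity clause of the conjecture is then automatic, and the description of the equivalence relation is exactly the assertion that no further identifications are forced by sheafification or by $\A^1$-localisation at the level of field-valued points. This mirrors the shape of Theorem \ref{T1}, with $R$-equivalence playing the role that $\sim$ plays here.

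The technical core is the construction of $L_{\A^1}X$ via Morel's singular functor $\operatorname{Sing}_*^{\A^1}$ followed by Nisnevich-local replacement. Here I would invoke Morel's theorem that over a perfect field $\pi_0^{\A^1}(X)$ is an $\A^1$-invariant Nisnevich sheaf, and then argue in two stages: (i) sheafification does not change sections over a field, because $F$-sections are detected on stalks and $\pi_0^N(X)$ is already unramified in the relevant sense; (ii) the higher applications of $\operatorname{Sing}_*^{\A^1}$ create no new identifications of $F$-points, so that the iterated construction stabilises on $F$-points after the first step.

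The hard part will be step (ii). The obstruction is genuine: $\operatorname{Sing}_*^{\A^1}(X)$ need not itself be $\A^1$-local, and a priori one must iterate the singular construction transfinitely before reaching $L_{\A^1}X$; controlling this process on field points — showing that the naive relation is already ``saturated'' over $F$ — is precisely the delicate point, and is the $\A^1$-homotopy analogue of the role played by $R$-equivalence and resolution of singularities in Theorem \ref{T1}. I expect that either a geometric hypothesis on $X$ or a careful stalkwise analysis, in the spirit of the unramifiedness arguments used for $R$-equivalence, would be needed to close this gap; absent such control, only the surjectivity statement seems within easy reach.
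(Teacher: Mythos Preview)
The statement you were given is a \emph{Conjecture}, not a theorem: the paper does not prove it and makes no attempt to. It is quoted verbatim from Morel \cite{morel} in the introduction, solely to highlight the analogy between the paper's Theorem~\ref{T1} (Hom sets in $S_b^{-1}\Sm(F)$ are $R$-equivalence classes) and the conjectural description of $\Hom_{\sH(F)}(\Spec F,X)$. There is therefore no ``paper's own proof'' to compare your proposal against.

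That said, your outline is the standard reduction and is correctly organised: identify $\Hom_{\sH(F)}(\Spec F,X)$ with $\pi_0^{\A^1}(X)(F)$, compare with the naive $\A^1$-chain quotient, and isolate the difficulty in showing that iterated $\operatorname{Sing}_*^{\A^1}$ plus Nisnevich sheafification creates no new identifications on $F$-points. You are also honest that you cannot close step~(ii), and you are right not to: that is exactly the content of the conjecture, and it is genuinely open in the generality stated. Nothing in the techniques of this paper --- which rely on resolution of singularities and the properness of the target to control $R$-equivalence --- transfers to give the required control over the $\A^1$-localisation functor on arbitrary smooth (non-proper) $X$. So your diagnosis of where the gap lies is accurate; just be aware that you were asked to prove a statement the paper itself only records as an open problem.
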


(Note that this ``$\A^1$-equivalence" coincides with $R$-equivalence if $X$ is proper.) Their result can then be enriched as follows:

\begin{Th}[\protect{\cite{ck}}]
The Yoneda embedding of  $\Sm$ into the category of simplicial presheaves of sets on $\Sm$ induces
a fully faithful functor
\[S^{-1}_b\Sm\To S_b^{-1}\sH \]
where $S_b^{-1}\sH$ is a suitable localisation of $\sH$ with respect to birational morphisms.
\end{Th}

\subsection*{Acknowledgements} We would like to thank the many colleagues who gave us insights about this work, among whom  Jean-Louis 
Colliot-Th\'e\-l\`e\-ne, H\'e\-l\`e\-ne Es\-nault, Najmuddin Fakhruddin, Ofer Gabber, Hagen Knaf, Geor\-ges Maltsiniotis, Vi\-kram Mehta, Bernard Teissier, Michael Temkin, I-Hsun Tsai and Michel Va\-qui\'e.  Finally, we thank Colliot-Th\'el\`ene and Gabber for kindly allowing us to include Appendices
\ref{colliot} and \ref{gabber} in this paper.

\subsection*{Conventions} $F$ is the base field. ``Variety'' means irreducible 
separated $F$-scheme of finite type. All morphisms are $F$-morphisms. If $X$ is a variety,
$\eta_X$ denotes its generic point.

\section{Preliminaries and notation}\label{s1}

In this section, we collect some basic material that will be used in the
paper. This allows us to fix our notation.

\subsection{Localisation of categories and calculus of fractions}
\label{cf}

We refer to Gabriel-Zisman \cite[Chapter I]{gz} for the necessary
background. Recall \cite[I.1]{gz} that if $\sC$ is a small category and
$S$ is a collection of morphisms in $\sC$, there is a category 
$\sC[S^{-1}]$ and a  functor $\sC\to
\sC[S^{-1}]$ which is universal among functors from $\sC$ which invert the
elements of $S$. When $S$ satisfies \emph{calculus of fractions}
\cite[I.2]{gz} the category $\sC[S^{-1}]$ is equivalent to another one,
denoted $S^{-1}\sC$ by Gabriel and Zisman, in which the Hom sets are more
explicit.


If $\sC$ is only essentially small, one can construct a category verifying the same $2$-universal property by starting from an equivalent small category, provided $S$ contains the identities. All categories considered in this paper are subcategories of $\Var(F)$ (varieties over our base field $F$) or $\place(F)$ (finitely generated extensions of $F$, morphisms given by places), hence are essentially small. 

We shall encounter situations where calculus of fractions
is satisfied, as well as others where it is not. We shall take
the practice to abuse notation and write $S^{-1}\sC$ rather than
$\sC[S^{-1}]$ even when calculus of fractions is not verified.

\begin{nota}\label{no1.1} If $(\sC,S)$ is as above, we write $\langle S\rangle$ for the \emph{saturation} of $S$: it is the set of morphisms $s$ in $\sC$ which become invertible in $S^{-1}\sC$. We have $S^{-1} \sC=\langle S\rangle^{-1}\sC$ and $\langle S\rangle$ is maximal for this property.
\end{nota}

Note the following easy lemma:

\begin{lemma}\label{lfess} Let $T:\sC\to \sD$ be a full and essentially surjective functor. Let $S\in Ar(\sC)$ be a set of morphisms. Then the induced functor $\bar T:S^{-1} \sC\to T(S)^{-1} \sD$ is full and essentially surjective.
\end{lemma}

\begin{proof} Essential surjectivity is obvious. Given two objects $X,Y\in S^{-1} \sC$, a morphism from $\bar T(X)$ to $\bar T(Y)$ is given by a zig-zag of morphisms of $\sD$. By the essential surjectivity of $T$, lift all vertices of this zig-zag, then lift its edges thanks to the fullness of $T$.
\end{proof}

\subsection{Equivalence relations}\label{s1.2}

\begin{defn}\label{d1.5} Let $\sC$ be a category. An
\emph{equivalence relation} on $\sC$ consists, for all
$X,Y\in \sC$, of an equivalence relation $\sim_{X,Y}=\sim$ on
$\sC(X,Y)$ such that $f\sim g$ $\Rightarrow$ $fh\sim gh$
and $kf\sim kg$ whenever it makes sense.
\end{defn}

In \cite[p. 52]{maclane}, the above notion is called a `congruence'.
Given an equivalence relation $\sim$ on $\sC$, we may form the
\emph{factor category} $\sC/\sim$, with the same objects as $\sC$
and such that $(\sC/\sim)(X,Y)=\sC(X,Y)/\sim$. This category and the
projection functor $\sC\to \sC/\sim$ are universal for functors
from $\sC$ which equalise equivalent morphisms.

\begin{ex} Let $\sA$ be an Ab-category (sets of morphisms are
abelian groups and composition is bilinear). An \emph{ideal}
$\sI$ in $\sA$ is given by a subgroup $\sI(X,Y)\subseteq \sA(X,Y)$
for all $X,Y\in \sA$ such that $\sI\sA\subseteq \sI$ and
$\sA\sI\subseteq \sI$. Then the ideal $\sI$ defines an equivalence
relation on $\sA$, compatible with the additive structure.
\end{ex}

Let $\sim$ be an equivalence relation on the category $\sC$. We
have the collection $S_\sim=\{f\in \sC\mid f$ is invertible in
$\sC/\sim\}$. The functor $\sC\to \sC/\sim$ factors as a functor
$S_\sim^{-1}\sC\to \sC/\sim$. Conversely, let $S\subset \sC$ be a set
of morphisms. We have the equivalence relation $\sim_S$ on $\sC$
such that $f\sim_S g$ if $f=g$ in $S^{-1}\sC$, and the localisation
functor $\sC\to S^{-1}\sC$ factors as $\sC/\sim_S\to S^{-1}\sC$.
Neither of these two factorisations is an equivalence of categories
in general; however, \cite[Prop. 1.3.3]{GP} remarks that if $f\sim g$ implies $f = g$ in
$S_\sim^{-1}\sC$, then $S_\sim^{-1}\sC\to \sC/\sim$ is an isomorphism of categories.

\begin{exo} Let $A$ be a commutative ring and $I\subseteq A$ an
  ideal. 

a) Assume that the set of minimal primes of $A$ that do not contain
$I$ is finite (\eg that $A$ is noetherian). Show that the following
two conditions are equivalent:
\begin{thlist}
\item There exists a multiplicative subset $S$ of $A$ such that
  $A/I\allowbreak \simeq
  S^{-1}A$ (compatibly with the maps $A\to A/I$ and $A\to S^{-1}A$). 
\item $I$ is generated by an idempotent.
\end{thlist}

(Hint: show first that, without any hypothesis, (i) is equivalent to 
\begin{thlist}
\item[(iii)] For any $a\in I$, there exists $b\in I$ such that
  $ab=a$.)
\end{thlist}

b) Give a counterexample to (i) $\Rightarrow$ (ii) in the general case
(hint: take $A=k^\N$, where $k$ is a field). 
\end{exo}

\subsection{Places, valuations and centres \protect\cite[Ch. VI]{zs},
\cite[Ch. 6]{Bour}}\label{place}

Recall \cite[Ch. 6, \S 2, Def. 3]{Bour} that a \emph{place} from a
field $K$ to a field $L$ is a map $\lambda:K\cup\{\infty\}\to
L\cup\{\infty\}$ such that $\lambda(1)=1$ and $\lambda$ preserves sum
and product whenever they are defined. We shall usually denote
places by screwdriver arrows:
\[\lambda:K\tto L.\]

Then $\sO_\lambda=\lambda^{-1}(L)$ is a valuation ring of $K$ and
$\lambda_{\mid\sO_\lambda}$ factors as
\[\sO_\lambda\surj \kappa(\lambda)\inj L\]
where $\kappa(\lambda)$ is the residue field of $\sO_\lambda$.
Conversely, the data of a valuation ring $\sO$ of $K$ with residue
field $\kappa$ and of a field homomorphism $\kappa\to L$ uniquely
defines a place from $K$ to $L$ (\loccit, Prop. 2). It is easily
checked that the composition of two places is a place.

\begin{caution}  Unlike Zariski-Samuel \cite{zs} and other authors \cite{mva,knaf-kuhlmann}, we compose places in the same order as extensions of fields: so if $K\overset{\lambda}{\tto} L \overset{\mu}{\tto} M$ are two successive places, their composite is written $\mu\lambda$ in this paper. We hope this will not create confusion.
\end{caution}

If $K$ and $L$ are extensions of $F$, we say that $\lambda$ is
an \emph{$F$-place} if $\lambda_{\mid F}=Id$ and then write
$F(\lambda)$ rather than $\kappa(\lambda)$.

In this situation, let $X$ be an integral $F$-scheme of finite type
with function field $K$. A point $x\in X$ is a \emph{centre} of a
valuation ring $\sO\subset K$ if $\sO$ dominates the local ring
$\sO_{X,x}$. If $\sO$ has a centre on $X$, we sometimes say that
$\sO$ is \emph{finite} on $X$. As a special case of the valuative
criterion of separatedness (\resp of the valuative criterion of
properness), $x$ is unique (\resp and exists) for all $\sO$ if and only
if
$X$ is separated (\resp proper) \cite[Ch. 2, Th. 4.3 and
4.7]{hartshorne2}.

On the other hand, if $\lambda:K\tto L$ is an $F$-place, then a point
$x\in X(L)$ is a \emph{centre} of $\lambda$ if there is a map
$\phi:\Spec \sO_\lambda\to X$ letting the diagram
\[\xymatrix{
\Spec \sO_\lambda\ar[dr]^\phi&\Spec K\ar[l]\ar[d]\\
\Spec L\ar[u]^{\lambda^*}\ar[r]^x& X
}\]
commute. Note that the image of the closed point by $\phi$ is then a centre of the
valuation ring $\sO_\lambda$ and that $\phi$ uniquely determines $x$.

In this paper, when $X$ is separated we shall denote by $c_X(v)\in X$
the centre of a valuation $v$ and by $c_X(\lambda)\in X(L)$ the
centre of a place $\lambda$, and carefully distinguish between the two notions
(one being a scheme-theoretic point and the other a rational point).

We have the following useful lemma from Vaqui\'e \cite[Prop. 2.4]{mva}; we reproduce its proof.

\begin{lemma}\label{lvaquie} Let $X\in \Var$, $K=F(X)$, $v$ a valuation on $K$ with residue field $\kappa$ and $\bar v$ a valuation on $\kappa$. Let  $v'= \bar v \circ v$ denote the composite valuation.\\ 
a) If $v'$ is finite on $X$, so is $v$.\\
b) Assume that $v$ is finite on $X$, and let $Z\subset X$ be the closure of its centre (so that $F(Z)\subseteq \kappa$).  Then $v'$ is finite on $X$  if and only if  [the restriction to $F(Z)$ of] $\bar v$ is finite on $Z$, and then $c(\bar v)\in Z$ equals $c(v')\in X$.
\end{lemma} 

\begin{proof} We may assume that  $X = \Spec A$ is an affine variety. Denoting respectively by $V , V', \bar V$ and $\fm, \fm', \bar \fm$ the valuation rings associated to $v,v',\bar v$ and their maximal ideals, we have $(0)\subset\fm\subset\fm' \subset V' \subset V \subset K$ and $\bar \fm \subset \bar V = V'/\fm \subset \bar K = V/\fm$. 

a) $v'$ is finite on $X$ if and only if $A\subset V'$, which implies $A\subset V$.

b) The centres of the valuations $v$ and $v'$ on $X$ are defined by the prime ideals $\fp=A\cap\fm$ and $\fp' =A\cap\fm'$ of $A$, and the centre of the valuation $\bar v$ on $Z = \Spec \bar A$, with $\bar A = A/\fp$ is defined by the prime ideal $\bar \fp = \bar A \cap \bar\fm$ of $\bar A$. Then the claim is a consequence of the equality $\bar\fp = \fp'/\fp$.
\end{proof}

\subsection{Rational maps}\label{ratmaps} Let $X,Y$ be two $F$-schemes
of finite type, with $X$ integral and $Y$ separated. Recall that a \emph{rational map} from $X$ to $Y$ is a pair
$(U,f)$ where $U$ is a
dense open subset of $X$ and $f:U\to Y$ is a morphism. Two rational
maps $(U,f)$ and $(U',f')$ are equivalent if there exists a dense open
subset $U''$ contained in
$U$ and $U'$ such that $f_{\mid U''}=f'_{\mid U''}$. We denote by
$\Rat(X,Y)$ the set of equivalence classes of rational maps, so that
\[\Rat(X,Y)=\colim \Map_F(U,Y)\]
where the limit is taken over the open dense subsets of $X$. There is a largest open
subset $U$ of $X$ on which a given rational map $f:X\ttto Y$ is
defined \cite[Ch. I, Ex. 4.2]{hartshorne2}. The (reduced) closed
complement $X-U$ is called the \emph{fundamental set} of $f$
(notation: $\Exc(f)$). We say that $f$ is \emph{dominant} if $f(U)$ is
dense in $Y$.

Similarly, let $f:X\to Y$ be a birational morphism. The complement of
the largest open subset of $X$ on which $f$ is an isomorphism is
called the \emph{exceptional locus} of $f$ and is denoted by
$\Fund(f)$. \label{exc} 

Note that the sets $\Rat(X,Y)$ only define a \emph{precategory}
(or diagram, or diagram scheme, or quiver) $\Rat(F)$, because rational
maps cannot be composed in general. To clarify this, let $f:X\ttto Y$
and $g:Y\ttto Z$ be
two rational maps, where $X,Y,Z$
are varieties. We say that $f$ and $g$ are \emph{composable} if
$f(\eta_X)\notin \Exc(g)$, where
$\eta_X$ is the generic point of $X$. Then there exists an open subset
$U\subseteq X$ such that $f$ is defined on $U$ and $f(U)\cap
\Exc(g)=\emptyset$, and $g\circ f$ makes sense as a rational map. This
happens in two important cases:
\begin{itemize}
\item $f$ is dominant;
\item $g$ is a morphism.
\end{itemize}

This composition law is associative wherever it makes sense. In
particular, we do have the category $\Rat_\dom(F)$ with objects 
$F$-varieties and morphisms dominant rational maps. Similarly,
the category $\Var(F)$ of \ref{zoo1} acts on $\Rat(F)$ on the left. 

\begin{lemma}[\protect{\cite[Lemma 8.2]{loc}}]\label{l1} Let $f,g:X\to
  Y$ be two morphisms, with $X$ 
integral and $Y$ separated. Then $f=g$ if and only if
$f(\eta_X)=g(\eta_X)=:y$ and $f,g$ induce the same map $F(y)\to F(X)$ on
the residue fields.\qed
\end{lemma}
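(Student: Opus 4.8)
The forward implication is immediate, so the plan is to concentrate on the converse, using the standard principle that two morphisms into a separated scheme coincide as soon as they coincide on a dense subset; here I would reduce the whole matter to agreement at the single generic point.

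Concretely, I would form the morphism $h=(f,g)\colon X\to Y\times_F Y$ and invoke separatedness of $Y$ to get that the diagonal $\Delta_Y\colon Y\to Y\times_F Y$ is a closed immersion. Then the equaliser $Z:=h^{-1}(\Delta_Y)$ is a closed subscheme of $X$, and by its universal property $f=g$ is equivalent to $Z=X$. The first step is to verify that $\eta_X\in Z$ as a point, i.e. that the two composites $\Spec F(X)\to X\rightrightarrows Y$ induced by $f$ and $g$ agree. This is precisely what the hypotheses give: a morphism from the spectrum of a field $k$ to $Y$ is the datum of a point $y\in Y$ together with a field homomorphism $F(y)\to k$, and the assumptions $f(\eta_X)=g(\eta_X)=y$ together with ``$f,g$ induce the same map $F(y)\to F(X)$'' say exactly that these two data coincide for $k=F(X)$. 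Hence $h(\eta_X)\in\Delta_Y$, that is, $\eta_X\in Z$.

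The second step uses integrality of $X$. Since $\eta_X$ is dense in $X$, the closed subset $|Z|$, which contains $\eta_X$, must equal all of $|X|$; and since $X$ is reduced, any closed subscheme whose underlying space is all of $X$ coincides with $X$ (working locally on $\Spec A\subseteq X$ with $Z$ cut out by an ideal $I$, the condition $V(I)=\Spec A$ forces $I\subseteq\sqrt 0=0$, whence $I=0$). Therefore $Z=X$ and $f=g$.

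The only genuinely delicate point, and the one I would watch most carefully, is this last passage from the set-theoretic equality $|Z|=|X|$ to the scheme-theoretic equality $Z=X$: it is exactly where reducedness of $X$ enters (irreducibility having already been used to make $\eta_X$ dense), and it explains why an integrality-type hypothesis on the source is indispensable.
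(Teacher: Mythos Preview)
Your proof is correct and is exactly the standard argument via the equaliser (pullback of the diagonal), using separatedness of $Y$ to make $Z$ closed and integrality of $X$ to conclude $Z=X$ from $\eta_X\in Z$. The paper does not give its own proof of this lemma: it is stated as a well-known fact, recalled from \cite[Lemma 8.2]{loc} and marked with a \qed, so there is nothing to compare against beyond noting that your argument is the expected one.
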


For $X,Y$ as above, there is a well-defined map
\begin{align}
\Rat(X,Y)&\to Y(F(X))\label{eq2.1}\\
(U,f)&\mapsto f_{\mid \eta_X}\notag
\end{align}
where $\eta_X$ is the generic point of $X$.

\begin{lemma}\label{l2.1} The map \eqref{eq2.1}  is bijective.
\end{lemma}

\begin{proof} Surjectivity is clear, and injectivity follows from
Lemma \ref{l1}.\end{proof}

\subsection{The graph trick} \label{p1.1} We shall often use this
well-known and basic device, which allows us to replace a rational map by
a morphism.
 
Let $U,Y$ be two $F$-varieties. Let $j:U\to X$ be an open immersion
($X$ a variety) and $g:U\to Y$ a morphism. Consider the graph 
$\Gamma_g\subset U\times Y$. By the first projection, $\Gamma_g\iso U$.
Let $\bar\Gamma_g$ be the closure of $\Gamma_g$ in $X\times Y$, viewed as
a reduced scheme. Then 
the rational map $g:X\ttto Y$ has been replaced by $g':\bar\Gamma_g\to Y$
(second projection) through the birational map $p:\bar\Gamma_g\to X$
(first projection). Clearly, if $Y$ is
proper then $p$ is proper. 

\subsection{Structure theorems on varieties} Here we collect two
well-known results, for future reference.

\begin{thm}[Nagata \protect{\cite{nagata}}]\label{nagata} Any variety
  $X$ can be embedded into a proper
  variety $\bar X$. We shall sometimes call $\bar X$ a
  \emph{compactification} of $X$.
\end{thm}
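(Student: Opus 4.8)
The plan is to prove this by the valuation-theoretic route, which matches the toolkit of \S\ref{place}: rather than gluing charts by hand, I would characterise properness through centres of valuations and let the quasi-compactness of the space of valuations drive the gluing. Since (by our convention) $X$ is integral and separated, set $K=F(X)$. By the valuative criterion recalled in \S\ref{place}, a separated finite-type $F$-variety $\bar X$ containing $X$ as a dense open is a compactification as soon as every $F$-valuation ring $\sO$ of $K$ admits a centre $c(\sO)\in\bar X$. So the target is to manufacture a single separated model, containing $X$ as a dense open, on which \emph{all} valuations of $K/F$ are finite.

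First I would dispose of the local problem. As $X$ is of finite type it has a finite affine open cover $X=\bigcup_{i=1}^n U_i$; each $U_i$ admits a closed immersion $U_i\inj \A^{N_i}$, hence an immersion into $\P^{N_i}$, and I take $V_i$ to be its scheme-theoretic closure, a projective (hence proper) variety with $U_i$ dense open. By properness each $\sO$ has a unique centre on every $V_i$, so the only obstruction to concluding is that these local compactifications disagree over the overlaps $U_i\cap U_j$: there is in general no morphism $V_i\To V_j$ extending the identity of $X$.

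The heart of the argument is to reconcile the $V_i$, and this is the step I expect to be the main obstacle. I would frame it around the space $S$ of all $F$-valuation rings of $K$ (the abstract Riemann surface of \S\ref{pvr}), which is quasi-compact by Zariski's theorem and carries continuous centre maps $\pi_i\colon S\To V_i$. The subset $\tilde X=\bigcup_i \pi_i^{-1}(U_i)\subseteq S$ of valuations finite on $X$ is open and, because $X$ is of finite type, quasi-compact. The classical way to exploit this is to reduce by induction on $n$ to the case of two charts and prove the key gluing lemma: given compactifications $\bar U_1,\bar U_2$ of two opens covering $X$, one builds a compactification of $X$ as the scheme-theoretic closure $\bar\Gamma$ of the graph of the birational correspondence $\bar U_1\ttto \bar U_2$ inside $\bar U_1\times \bar U_2$ (the graph trick of \S\ref{p1.1}), then checks valuation by valuation that $\bar\Gamma$ is separated and that $X$ embeds in it as a dense open.

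The delicate point—the crux—is precisely to pin down that the glued model recovers $X$ \emph{on the nose} as its open stratum, and not merely a birational modification of it, and that the result is separated. Here I would use that $X$ is separated, so that each valuation finite on $X$ has a \emph{unique} centre there; this rigidity forces the competing centres that $\sO$ acquires on the two factors $\bar U_i$ to coincide over $X$, which simultaneously identifies the open stratum with $X$ and yields separatedness via the uniqueness clause of the valuative criterion. Quasi-compactness of $\tilde X$ guarantees that only finitely many charts, hence finitely many inductive steps, are needed, so assembling the two-chart case over the cover produces the desired proper $\bar X\supseteq X$.
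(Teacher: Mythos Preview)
The paper does not prove this statement: Theorem~\ref{nagata} is one of the ``well-known results'' collected ``for future reference'' in \S1.5, stated with a bare citation to Nagata's original paper \cite{nagata} and no argument. So there is nothing to compare your proposal against in the paper itself.

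That said, your outline follows the Riemann--Zariski philosophy that does underlie modern treatments (Deligne, L\"utkebohmert, Conrad, Temkin), and you have correctly located the hard step. But the resolution you propose for the two-chart gluing does not close the gap. If $\bar\Gamma\subset\bar U_1\times\bar U_2$ is the closure of the graph of the birational correspondence, the projection $\bar\Gamma\to\bar U_1$ is proper birational, and over a point $x\in U_1\setminus U_2$ its fibre can be positive-dimensional: two valuations with the same centre $x$ on $X$ may well have different centres on $\bar U_2$, hence map to different points of $\bar\Gamma$. Separatedness of $X$ only gives uniqueness of centres \emph{on $X$}, not on $\bar\Gamma$; it does not furnish a morphism $X\to\bar\Gamma$, let alone an open immersion. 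The genuine argument requires an additional blow-up/modification step to flatten the boundary before gluing (this is exactly where Nagata's original proof, and its successors, do real work), and quasi-compactness of the Riemann--Zariski space is used to guarantee that finitely many such modifications suffice. Your sketch names the right ingredients but stops short of the mechanism that makes the induction go through.
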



\begin{thm}[Hironaka \protect{\cite{hironaka}}]\label{Hironaka} If
  $\car F=0$,\\ 
a) For any
  variety $X$ there exists a projective birational morphism $f:\tilde X\to
  X$ with $\tilde X$ smooth. (Such a morphism is sometimes called a
  modification.) Moreover, $f$ may be chosen such that it is an
  isomorphism away from the inverse image of the singular locus of
  $X$. In particular, any smooth variety $X$ may be embedded as an
  open subset of a smooth proper variety (projective if $X$ is
  quasi-projective).\\
b) For any proper birational morphism $p:Y\to X$ between smooth
varieties, there exists a
proper birational morphism $\tilde p:\tilde Y\to X$ which factors
through $p$ and is a composition of blow-ups with smooth centres.
\end{thm}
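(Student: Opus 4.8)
The plan is to treat the two parts separately; neither is something I would reprove from scratch, both being instances of Hironaka's resolution machinery, but I can indicate the structure and, where possible, reduce to the structure theorems already recorded above. For part a), the strategy is \emph{embedded resolution} by blow-ups along smooth centres. Working locally, I would embed $X$ in a smooth ambient variety and attach to each point of $X$ Hironaka's order/multiplicity invariant of the defining ideal; the decisive feature is that the locus where this invariant attains its maximum is itself smooth, so one may legitimately blow it up. One then shows that this blow-up does not increase the invariant and strictly decreases it after finitely many steps. The main obstacle --- and the genuinely hard heart of Hironaka's theorem --- is precisely this last point: a blow-up along the singular locus need not lower the naive multiplicity, so one must refine the invariant (order, then the accumulated history of exceptional divisors, and so on) into a value in a well-ordered set and prove that this value drops, which is what forces termination.

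Since every centre used in part a) lies inside the successive singular loci, all of which map into the singular locus of $X$, the resulting morphism $f\colon\tilde X\to X$ is automatically an isomorphism over the smooth locus, giving the ``moreover'' clause. For the final assertion, I would compactify a smooth $X$ by Nagata's Theorem \ref{nagata} to a proper (possibly singular) variety $\bar X$ containing $X$ as a dense open subset, and then apply part a) to $\bar X$: because resolution is an isomorphism over the smooth locus and $X$ is already smooth, the open $X$ is untouched and one obtains a smooth proper variety containing $X$ as an open subset. When $X$ is quasi-projective one can arrange $\bar X$ quasi-projective and the resolving modification projective, so the compactification is projective.

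For part b), given $p\colon Y\to X$ proper birational with both $X,Y$ smooth, the plan is to exhibit $Y$ as dominated by a blow-up and then principalize. By the relative Chow lemma (Theorem \ref{Chow}) there is a projective birational $r\colon Y'\to Y$ with $p\circ r$ quasi-projective, hence projective; a projective birational morphism to the Noetherian base $X$ is the blow-up of a coherent ideal sheaf $\sI$ (\cf \cite[Ch. II, 7.17]{hartshorne2}), so $Y'=\mathrm{Bl}_{\sI}X$. Applying Hironaka's principalization of ideals to $\sI$ yields a composite of blow-ups with smooth centres $\tilde p\colon\tilde Y\to X$ for which $\sI\cdot\sO_{\tilde Y}$ is invertible; by the universal property of blow-up, $\tilde p$ then factors through $\mathrm{Bl}_{\sI}X=Y'$, and composing $\tilde Y\to Y'\by{r}Y$ exhibits the required factorisation $\tilde p = p\circ q$. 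The only point needing care is the production of $\sI$, which is exactly where Chow's lemma together with the identification of projective birational morphisms as blow-ups intervenes; everything after that is formal from the universal property of the blow-up.
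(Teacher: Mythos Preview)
The paper does not prove this theorem: it is recorded as Theorem~\ref{Hironaka} with a bare citation to \cite{hironaka} and is used throughout as a black box (see the asterisk convention explained just after its statement). So there is no ``paper's own proof'' to compare against; you have supplied an outline where the authors deliberately supplied none.

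Your outline is a reasonable high-level sketch of the modern approach. For part a) you correctly identify the architecture (blow up along the maximal-invariant locus, which is smooth; show the refined invariant drops) while honestly flagging that the descent of the invariant is the genuine content. The derivation of the smooth compactification from Nagata plus resolution-over-the-smooth-locus is exactly right. For part b), your reduction via Chow's lemma to a projective birational morphism, then to a blow-up of an ideal sheaf, then to principalization, is the standard route and is sound; the factorisation through $Y$ via $Y'$ and the universal property of blow-up is correct. One small caveat: the identification of a projective birational morphism with a blow-up (your appeal to \cite[II, 7.17]{hartshorne2}) is stated there for a quasi-projective base, so if $X$ is merely a variety you should either note that the statement holds more generally for Noetherian bases or first reduce to the quasi-projective case.
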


In some places we shall  assume characteristic $0$ in order to use resolution of
singularities. We shall specify this by putting an asterisk to the statement of the
corresponding result (so, the asterisk will mean that the characteristic $0$ assumption is due
to the use of Theorem \ref{Hironaka}).

\subsection{Some multiplicative systems}\label{zoo1} Let $\Intsep(F)=\Intsep$ be the category of \emph{$F$-varieties}: objects
are $F$-varieties (\ie integral separated $F$-schemes of finite type) and
morphisms are all $F$-morphisms. We write $\Sm(F)=\Sm$ for its full subcategory consisting of smooth varieties. As in \cite{loc}, the superscripts $^\qp,^\proper, ^\proj$ respectively mean quasi-projective, proper and projective.

As in \cite{loc}, we shall use various collections of morphisms of
$\Var$ that are to be inverted:
\begin{itemize}
\item \emph{Birational morphisms} $S_b$: $s\in S_b$ if $s$ is dominant
  and induces an isomorphism of function fields.
\item \emph{Stably birational morphisms} $S_r$: $s\in S_r$ if $s$ is
  dominant and induces a purely transcendental extension of function fields.
\end{itemize}

In addition, we shall use the following subsets of $S_b$:
\begin{itemize}
\item $S_o$: open immersions
\item $S_b^p$: proper birational morphisms
\end{itemize}
and of $S_r$:
\begin{itemize}
\item $S_r^p$: proper stably birational morphisms
\item $S_h$: the projections $pr_2:X\times\P^1\to X$.
\end{itemize}

We shall need the following lemma:

\begin{lemma}\label{sb=so} 
a)  In $\Var$ and $\Sm$, we have $\langle S_b\rangle = \langle S_o\rangle$ and $\langle S_r\rangle= \langle S_b\cup S_h\rangle$
(see Notation \ref{no1.1}).\\
b) We have  $\langle S_r^p\rangle =\langle S_b^p\cup
S_h\rangle$ in $\Var$, *and also in $\Sm$ under resolution of singularities.\\
\end{lemma}

\begin{proof} 

a)  The first equality is left to the reader. For the second one, given a morphism $s:Y\to X$ in $S_r$ with $X,Y\in\Var$ or $\Sm$, it suffices to
consider a
commutative diagram
\begin{equation}\label{eqblp}
\xymatrix{
&\tilde Y\ar[dl]_t\ar[dr]^u\\
Y\ar[dr]_s && X\times(\P^1)^n\ar[dl]^\pi\\
&X
}
\end{equation}
with $t,u\in S_o$, $\tilde Y$ a common open subset of $Y$ and $X\times(\P^1)^n$.

b) For a morphism $s:Y\to X$ in $S_r^p$ with $X,Y\in\Var$, we get a diagram \eqref{eqblp}, this time with $t,u\in S_b^p$ and $\tilde Y$ obtained by the graph trick. If $X,Y\in \Sm$, we use resolution to replace $\tilde Y$ by a smooth variety.
%
%
\end{proof}

Here is now the main result of this section.

 \begin{thm}\label{sb=sr} In $\Sm$, the sets $S_b$ and $S_r$
have the same saturation. *This is also true for $S_b^p$ and $S_r^p$  under resolution of singularities.\\ In particular, the obvious functor $S_b^{-1}\Sm\to S_r^{-1}\Sm$ is an equivalence of categories.
\end{thm}

\begin{proof} Let us prove that $S_h$ is
contained in the saturation of $S_b^p$, hence in the saturation of $S_b$. Let  $Y$ be smooth variety, and let $f:Y\times \P^1\to Y$ be the
first projection. We have to show that $f$ becomes 
invertible in $(S_b^p)^{-1}\Sm$. By Yoneda's lemma, it suffices to show
that $F(f)$ is invertible for any (representable) functor
$F:(S_b^p)^{-1}\Sm^\op\to\Sets$. 
This follows by taking the proof of Appendix \ref{colliot} and ``multiplying" it by $Y$. 

To get Theorem \ref{sb=sr}, we now apply Lemma \ref{sb=so} a) and b). (Applying b) is where resolution of singularities is required.)
\end{proof}

\begin{rk} Theorem \ref{sb=sr} is also valid in $\Var$, without resolution of singularities hypothesis (same proof). Recall however that the functor $S_b^{-1}\Sm\to S_b^{-1}\Var$ induced by the inclusion $\Sm\inj \Var$ is far from being fully faithful \cite[Rk. 8.11]{loc}.
\end{rk}

\section{Places and morphisms}\label{s3}

\subsection{The category of places}

\begin{defn}\label{d1.1} We denote by $\place(F)=\place$ the category with
objects finitely generated extensions of $F$ and morphisms
$F$-places. We denote by $\field(F)=\field$ the subcategory of $\place(F)$
with the same objects, but in which morphisms are $F$-homomorphisms of
fields. We shall sometimes call the latter \emph{trivial places}.
\end{defn}

\begin{rk} \label{r1.1} 
If $\lambda:K\tto L$ is a morphism in
$\place$, then its residue field $F(\lambda)$ is finitely
generated over $F$, as a subfield of the finitely generated field
$L$. On the
other hand, given a finitely generated extension $K/F$, there
exist valuation rings of $K/F$ with infinitely generated residue
fields as soon as $\trdeg(K/F)>1$, \cf
\cite[Ch. VI, \S 15, Ex. 4]{zs}.
\end{rk}

In this section, we relate the categories
$\place$ and $\Var$. We start with the main tool, which is the notion of
compatibility between a place and a morphism.

\subsection{A compatibility condition}

\begin{defn}\label{d1} Let $X,Y\in \Var$, $f:X\ttto Y$ a
rational map
and $v:F(Y)\tto F(X)$ a place. We say that $f$ and $v$ are
\emph{compatible} if
\begin{itemize}
\item $v$ is finite on $Y$ (\ie has a centre in $Y$).
\item The corresponding diagram
\[\begin{CD}
\eta_X@>v^*>> \Spec \sO_v\\
@V{}VV @V{}VV\\
U@>f>> Y
\end{CD}\]
commutes, where $U$ is an open subset of $X$ on which $f$ is defined.
\end{itemize}
\end{defn}

\begin{prop}\label{l2} Let $X,Y,v$ be as in Definition \ref{d1}. Suppose
that $v$ is finite on $Y$, and let
$y\in Y(F(X))$ be its centre. Then a rational map $f:X\ttto Y$ is compatible with
$v$ if and only if
\begin{itemize}
\item $y=f(\eta_X)$ and
\item the diagram of fields
\[\xymatrix{
F(v)\ar[dr]^v\\
F(y)\ar[u] \ar[r]^{f^*}& F(X)
}\]
commutes.
\end{itemize}
In particular, there is at most one such $f$.
\end{prop}

\begin{proof} Suppose $v$ and $f$ compatible. Then $y=f(\eta_X)$ because
$v^*(\eta_X)$ is the closed point of $\Spec\sO_v$. The commutativity of
the diagram then follows from the one in Definition \ref{d1}. Conversely,
if
$f$ verifies the two conditions, then it is obviously compatible with
$v$. The last assertion follows from Lemma \ref{l1}.\end{proof}

\begin{cor}\label{c1.1} a) Let $Y\in\Var$ and let
$\sO$ be a valuation ring of $F(Y)/F$ with residue field $K$ and
centre $y\in Y$. Assume that
$F(y)\iso K$. Then, for any rational map $f:X\ttto Y$ with $X$ integral, such
that
$f(\eta_X)=y$, there exists a unique place $v:F(Y)\tto F(X)$ with
valuation ring
$\sO$ which is compatible with $f$.\\
b) If $f$ is an immersion, the condition $F(y)\iso K$ is also necessary
for the existence of $v$.\\
c) In particular, let $f:X\ttto Y$ be a dominant rational
  map. Then $f$ is compatible with the
trivial place $F(Y)\inj F(X)$, and this place is the only one with which
$f$ is compatible.
\end{cor}

\begin{proof} This follows immediately from Proposition \ref{l2}.\end{proof}

\begin{prop}\label{p1} Let $f:X\to Y$, $g:Y\to Z$ be two morphisms
of varieties. Let $v:F(Y)\tto F(X)$ and $w:F(Z)\tto F(Y)$ be two
places. Suppose that $f$ and $v$ are compatible and that $g$ and
$w$ are compatible. Then $g\circ f$ and $v\circ w$ are compatible.
\end{prop}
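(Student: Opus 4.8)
The plan is to verify the two conditions of Proposition \ref{l2}, applied to the morphism $g\circ f\colon X\to Z$ and the composite place $v\circ w\colon F(Z)\tto F(X)$. By the given compatibilities and Proposition \ref{l2}, $v$ has centre $y=f(\eta_X)$ on $Y$ with $v_{\mid F(y)}=f^*$ (along $F(y)\inj F(v)$), and $w$ has centre $z=g(\eta_Y)$ on $Z$ with $w_{\mid F(z)}=g^*$ (along $F(z)\inj F(w)$). Since a place is finite on a separated variety exactly when it has a centre, the task splits into showing that $v\circ w$ is finite on $Z$ with centre $z_0:=g(y)=(g\circ f)(\eta_X)$, and that the corresponding field diagram commutes.

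First I would unwind the composite: its valuation ring is $\sO_{v\circ w}=\{x\in\sO_w\mid w(x)\in\sO_v\}$, and the residue map of $w$ restricts to a local homomorphism $\sO_{v\circ w}\to\sO_v$, $x\mapsto w(x)$. To locate the centre I would prove $\sO_{Z,z_0}\subseteq\sO_{v\circ w}$ together with the domination $\fm_{v\circ w}\cap\sO_{Z,z_0}=\fm_{Z,z_0}$. Because $z_0=g(y)$ is a specialisation of $z=g(\eta_Y)$, we have $\sO_{Z,z_0}\subseteq\sO_{Z,z}\subseteq\sO_w$, so for $x\in\sO_{Z,z_0}$ the residue $w(x)$ is defined. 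The key computation is that $w(x)=g^\#_y(x)$ inside $F(Y)$, where $g^\#_y\colon\sO_{Z,z_0}\to\sO_{Y,y}$ is the local homomorphism induced by $g$ at $y$: indeed, by $(g,w)$-compatibility $w(x)=g^*(\bar x_z)$, where $\bar x_z\in F(z)$ is the residue of $x$ at $z$, and by functoriality of $g$ the square relating $\sO_{Z,z_0}\to\sO_{Y,y}$ with $\sO_{Z,z}\to F(z)\overset{g^*}{\to}F(Y)$ commutes, so $g^*(\bar x_z)=g^\#_y(x)$. Since $\sO_{Y,y}\subseteq\sO_v$ by $(f,v)$-compatibility, this gives $w(x)\in\sO_v$, hence $x\in\sO_{v\circ w}$; the domination follows because $\sO_{Z,z_0}\overset{g^\#_y}{\to}\sO_{Y,y}\inj\sO_v$ are local. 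Thus $z_0=g(y)$ is the centre of $v\circ w$, establishing the point condition.

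For the field diagram I would chase $\bar x\in F(z_0)$, lifted to $x\in\sO_{Z,z_0}$. Along $F(z_0)\inj F(v\circ w)$ the place gives $v(w(x))=v(g^\#_y(x))$; as $g^\#_y(x)\in\sO_{Y,y}$ has residue $\bar g^*_y(\bar x)\in F(y)$ (with $\bar g^*_y\colon F(z_0)\to F(y)$ the residue map of $g$ at $y$), and $(f,v)$-compatibility gives $v_{\mid F(y)}=f^*$, this equals $f^*(\bar g^*_y(\bar x))$. On the other hand $(g\circ f)^*$ factors as $F(z_0)\overset{\bar g^*_y}{\to}F(y)\overset{f^*}{\to}F(X)$ by functoriality of residue maps along $\eta_X\mapsto y\mapsto z_0$, giving the same value. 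Hence the diagram commutes, and Proposition \ref{l2} yields the compatibility of $g\circ f$ and $v\circ w$.

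The step I expect to be the main obstacle is the centre identification in the second paragraph: one must see that the centre of the composite place is $g(y)$ rather than the centre $z=g(\eta_Y)$ of $w$, and the only clean way I see to do this is through the commutativity of $g$'s local homomorphisms at $y$ and at $\eta_Y$, fed by \emph{both} given compatibilities. Alternatively, one can avoid residue-field bookkeeping by stacking the two squares of Definition \ref{d1}: the residue map $w\colon\sO_{v\circ w}\to\sO_v$ induces $\Spec\sO_v\to\Spec\sO_{v\circ w}$, and pasting this with the $(f,v)$-square on the left and the $(g,w)$-square on the right should produce the $(g\circ f,v\circ w)$-square; this trades the residue computations for a diagram chase but hits the same point when one constructs the centre morphism $\Spec\sO_{v\circ w}\to Z$.
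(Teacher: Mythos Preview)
Your proof is correct. It differs from the paper's in that you verify the two conditions of Proposition~\ref{l2} by explicit residue computations in local rings, whereas the paper argues diagrammatically: it observes that the square
\[
\begin{CD}
\eta_Y @>w^*>> \Spec \sO_w\\
@VVV @VVV\\
\Spec \sO_v @>>> \Spec \sO_{v\circ w}
\end{CD}
\]
is cocartesian (because $\sO_{v\circ w}$ is the fibre product of $\sO_w\to F(Y)\hookleftarrow \sO_v$ in rings), checks that the two composites $\eta_Y\to Z$ coming from the $(g,w)$-compatibility square and from $\Spec\sO_v\to Y\xrightarrow{g} Z$ agree, and then uses the pushout property to produce the dominant map $\Spec\sO_{v\circ w}\to Z$; stacking the $(f,v)$-square with this new square gives the compatibility directly from Definition~\ref{d1}. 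Your alternative sketched in the last paragraph is exactly this argument. The trade-off is that your main route avoids invoking the cocartesian property (which the paper states ``by definition'' but is a small algebraic fact about composite valuation rings), at the price of the element-level bookkeeping; the paper's route is shorter and more conceptual once that pushout is granted.
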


\begin{proof} We first show that $v\circ w$ is finite on $Z$. By definition, the
diagram
\[\begin{CD}
\eta_Y@>w^*>> \Spec \sO_w\\
@V{}VV @V{}VV\\
\Spec \sO_v@>>> \Spec \sO_{v\circ w}
\end{CD}\]
is cocartesian. Since the two compositions
\[\eta_Y\stackrel{w^*}{\longrightarrow} \Spec\sO_w\to Z\]
and
\[\eta_Y\to \Spec\sO_v\to Y\stackrel{g}{\longrightarrow} Z\]
coincide (by the compatibility of $g$ and $w$), there is a unique induced
(dominant) map $\Spec \sO_{v\circ w}\to Z$. In the diagram
\[\begin{CD}
\eta_X@>v^*>>\Spec\sO_v@>>> \Spec \sO_{v\circ w}\\
@V{}VV @V{}VV @V{}VV\\
X@>f>>Y@>g>>Z
\end{CD}\]
the left square commutes by compatibility of $f$ and $v$, and the
right square commutes by construction. Therefore the big rectangle
commutes, which means that $g\circ f$ and $v\circ w$ are compatible.\end{proof}

\subsection{The category $\VarP$}

\begin{defn}\label{d2} We denote by $\VarP(F)=\VarP$ the following
category:
\begin{itemize}
\item Objects are  $F$-varieties.
\item Let $X,Y\in \VarP$. A morphism $\phi\in \VarP(X,Y)$ is a
pair $(\lambda,f)$ with $f:X\to Y$ a morphism, $\lambda:F(Y)\tto F(X)$ a place and
$\lambda,f$ compatible.
\item The composition of morphisms is given by Proposition \ref{p1}.
\end{itemize}
If $\sC$ is a full subcategory of $\Var$, we also denote by $\sC \P(F)=\sC\P$ the full subcategory of
$\VarP$ whose objects are in $\sC$.
\end{defn}

We now want to do an elementary study of the two forgetful functors
appearing in the diagram below:
\begin{equation}\label{l3}
\begin{CD}
\VarP@>\Phi_1>> \place^\op\\
@V{\Phi_2}VV\\
\Intsep.
\end{CD}
\end{equation}

Clearly, $\Phi_1$ and $\Phi_2$ are essentially surjective. Concerning
$\Phi_2$, we have the following partial result on its fullness:

\begin{lemma}\label{l6} Let $f:X\ttto Y$ be a rational map, with $X$
integral and $Y$ separated. Assume that $y=f(\eta_X)$
is a \emph{regular} point (\ie $A=\sO_{Y,y}$ is regular). Then there is a
place
$v:F(Y)\tto F(X)$ compatible with $f$.
\end{lemma}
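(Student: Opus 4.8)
The plan is to reduce the statement to valuation theory and then invoke Corollary \ref{c1.1}. Write $A=\sO_{Y,y}$, which is a regular local ring by hypothesis, with fraction field $F(Y)$ and residue field $F(y)$. By Corollary \ref{c1.1} a), and since $f(\eta_X)=y$ is given, it suffices to produce a valuation ring $\sO$ of $F(Y)/F$ whose centre on $Y$ is $y$ and whose residue field is canonically isomorphic to $F(y)$: the corollary then supplies the (unique) compatible place $v$ with valuation ring $\sO$. As $Y$ is separated, having centre $y$ simply means that $\sO$ dominates $A$. So the whole problem reduces to the following purely local statement.

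\emph{If $(A,\fm)$ is a regular local ring with fraction field $K$ and residue field $k$, then there is a valuation ring $\sO$ of $K$ dominating $A$ whose residue field is $k$, the natural map $k\to\sO/\fm_{\sO}$ being an isomorphism.} I would prove this by induction on $d=\dim A$. The case $d=0$ is trivial ($\sO=K$). For $d>0$, I would choose a regular system of parameters $t_1,\dots,t_d$; because $A$ is factorial, the height-one prime $\fp=(t_1)$ gives a discrete valuation ring $A_{\fp}$ of $K$ with residue field $L=\operatorname{Frac}(A/(t_1))$, and $A/(t_1)$ is again regular local of dimension $d-1$ with residue field $k$. Applying the inductive hypothesis to $A/(t_1)$ produces a valuation ring $\sO'$ of $L$ dominating $A/(t_1)$ with residue field $k$. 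I then let $\sO$ be the composite valuation ring, i.e. the preimage of $\sO'$ under the residue map $A_{\fp}\to L$; by the standard theory of composite valuations (\cite[Ch. VI]{zs}) this is a valuation ring of $K$ whose residue field is that of $\sO'$, namely $k$.

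It remains only to check domination, which is a short verification: any element of $A$ maps into $A/(t_1)\subseteq\sO'$, so $A\subseteq\sO$; while any $x\in\fm$ is either divisible by $t_1$ in $A_{\fp}$ or has image in $\fm_{A/(t_1)}\subseteq\fm_{\sO'}$, so that $\fm_{\sO}\cap A=\fm$. The same computation identifies $k=A/\fm$ with $\sO/\fm_{\sO}$, so the residue field is canonically $F(y)$. Feeding $\sO$ into Corollary \ref{c1.1} a) then yields the desired place $v$ compatible with $f$.

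The hard part is the local statement, and precisely the requirement that the residue field be \emph{exactly} $F(y)$. One cannot merely quote the existence of some valuation dominating $A$ (for instance the $\fm$-adic order valuation), since its residue field would in general be far larger than $F(y)$ and would admit no embedding into $F(X)$ extending $f^*$. Regularity is exactly what makes the inductive construction run: it guarantees simultaneously that $(t_1)$ localises to a DVR and that the quotient $A/(t_1)$ stays regular with unchanged residue field $k$, so that forming the composite valuation never enlarges the residue field.
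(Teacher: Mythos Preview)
Your proof is correct and follows essentially the same approach as the paper: both reduce via Corollary~\ref{c1.1}~a) to constructing a valuation ring of $F(Y)$ dominating $A=\sO_{Y,y}$ with residue field exactly $F(y)$, and both build this as the composite of the chain of rank-one discrete valuations arising from a regular system of parameters. The only difference is presentational: you argue by induction on $\dim A$, while the paper writes out the full tower $F(Y)=F_d\tto F_{d-1}\tto\cdots\tto F_0=F(y)$ explicitly and composes the places directly.
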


\begin{proof} By Corollary \ref{c1.1} a), it is sufficient to
produce a valuation ring $\sO$ containing $A$ and with the same residue
field as $A$. 

The following construction is certainly classical. Let $\fm$ be the
maximal ideal of $A$ and let $(a_1,\dots,a_d)$ be a regular sequence
generating $\fm$, with $d=\dim A=\codim_Y y$. For 
$0\le i< j\le d+1$, let
\[A_{i,j}=(A/(a_j,\dots,a_d))_\fp\] 
where $\fp=(a_{i+1},\dots, a_{j-1})$ (for
$i=0$ we
invert no $a_k$, and for $j=d+1$ we mod out no $a_k$). Then, for any
$(i,j)$,
$A_{i,j}$ is a regular local ring of dimension $j-i-1$. In particular,
$F_i=A_{i,i+1}$ is the residue field of $A_{i,j}$ for any $j\ge i+1$. We
have
$A_{0,d+1}=A$ and there are obvious maps
\begin{align*}
A_{i,j}&\to A_{i+1,j}\quad\text{(injective)}\\
A_{i,j}&\to A_{i,j-1}\quad\text{(surjective).}
\end{align*}

Consider the discrete valuation $v_i$ associated to the discrete
valuation ring $A_{i,i+2}$: it defines a place, still denoted by $v_i$,
from $F_{i+1}$ to $F_i$. The composition of these places is a place $v$
from $F_d=F(Y)$ to $F_0=F(y)$, whose valuation ring dominates $A$ and
whose residue field is clearly $F(y)$.\end{proof}

\begin{rk} \label{rk2.1} In Lemma \ref{l6}, the assumption that $y$ is a
regular point is necessary. Indeed, take for $f$ a
closed immersion.
By \cite[Ch. 6, \S 1, Th. 2]{Bour}, there exists a valuation ring
$\sO$ of $F(Y)$ which dominates $\sO_{Y,y}$ and whose residue field
$\kappa$ is an algebraic extension of $F(y)=F(X)$. However
we cannot choose $\sO$ such that $\kappa=F(y)$ in general. The same
counterexamples as in \cite[Remark 8.11]{loc} apply (singular curves, 
the point $(0,0,\dots,0)$ on the affine cone $x_1^2+ x_2^2+\dots+x_n^2=
0$ over $\R$ for $n\ge 3$). 
\end{rk}

Now concerning $\Phi_1$, we have:

\begin{lemma}\label{l2bis} Let $X,Y$ be two varieties and
  $\lambda:F(Y)\tto F(X)$ a place. Assume that $\lambda$ is finite on $Y$.
Then there exists a unique rational map $f:X\ttto Y$ compatible with $\lambda$.
\end{lemma}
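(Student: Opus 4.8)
The plan is to deduce uniqueness directly from Proposition~\ref{l2} and to construct the rational map by spreading out a single $F(X)$-point of $Y$. Since $\lambda$ is finite on $Y$, it has a centre $y\in Y$, and the domination $\sO_{Y,y}\subseteq\sO_\lambda$ induces an embedding $F(y)\inj F(\lambda)$ of residue fields; composing with the tautological inclusion $F(\lambda)\inj F(X)$ (the second factor of $\lambda$) yields a field homomorphism $\phi\colon F(y)\to F(X)$. By Proposition~\ref{l2}, a rational map $f\colon X\ttto Y$ is compatible with $\lambda$ if and only if $f(\eta_X)=y$ and the induced residue map $f^*\colon F(y)\to F(X)$ equals $\phi$; in particular, such an $f$ is unique. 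So the whole content of the lemma is the \emph{existence} of $f$.

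First I would package the data of $y$ and $\phi$ as a morphism $\Spec F(X)\to Y$. Geometrically this is the composite $\Spec F(X)\by{\lambda^*}\Spec\sO_\lambda\to Y$, where the first map is induced by $\lambda_{\mid\sO_\lambda}\colon\sO_\lambda\to F(X)$ and the second is the centre map coming from $\sO_{Y,y}\subseteq\sO_\lambda$; equivalently, it is the local homomorphism $\sO_{Y,y}\to\sO_\lambda\to F(X)$, which sends the unique point of $\Spec F(X)$ to $y$ and induces $\phi$ on residue fields. This reproduces exactly the commuting square of Definition~\ref{d1} at the level of generic points.

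It then remains to spread this morphism out over a dense open subset of $X$. Concretely, I would choose an affine open neighbourhood $\Spec B$ of $y$ in $Y$, with $y$ corresponding to a prime $\fp\subset B$, and consider the ring map $B\to B/\fp\inj F(y)\by{\phi}F(X)$. Since $F(X)=\colim_U\sO_X(U)$ over the dense affine opens $U\subseteq X$ and $B$ is finitely generated over $F$, this map factors through $\sO_X(U)$ for some such $U$, giving a morphism $U\to\Spec B\subseteq Y$ and hence a rational map $f\colon X\ttto Y$. The main point to check --- and the only place where care is needed --- is that $f(\eta_X)=y$: the image of $\eta_X$ is the prime $\Ker(B\to F(X))$, which equals $\fp$ because both $B/\fp\inj F(y)$ and $\phi$ are injective. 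Since $f$ then induces $\phi$ on residue fields by construction, Proposition~\ref{l2} shows that $f$ is compatible with $\lambda$, completing the proof. I do not anticipate a serious obstacle here: the argument is a routine spreading-out, the one subtlety being the identification of the image point with $y$ rather than with a generisation of it, which is guaranteed by the injectivity of the residue field maps.
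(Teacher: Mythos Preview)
Your proposal is correct and follows essentially the same approach as the paper: uniqueness from Proposition~\ref{l2}, then existence by spreading out the map $\sO_{Y,y}\to F(X)$ over an affine neighbourhood of $y$. The only cosmetic difference is that the paper passes through an auxiliary affine model $X'=\Spec T$ of $F(X)$ and then restricts to a common open of $X$ and $X'$, whereas you use the colimit description $F(X)=\colim_U\sO_X(U)$ directly; your explicit verification that $f(\eta_X)=y$ (rather than a generisation) is a nice touch that the paper leaves implicit.
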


\begin{proof} Let $y$ be the centre of $\sO_\lambda$ on $Y$ and $V=\Spec R$ an
affine neighbourhood of $y$, so that $R\subset \sO_\lambda$, and let
$S$ be the image of $R$ in $F(\lambda)$.  Choose a finitely generated
$F$-subalgebra $T$ of $F(X)$ containing $S$, with quotient field $F(X)$. 
Then
$X'=\Spec T$ is an affine model of $F(X)/F$. The composition $X'\to
\Spec S\to V\to Y$ is then compatible with
$v$. Its restriction to a common open subset $U$ of $X$ and $X'$ defines
the desired map $f$. The uniqueness of $f$ follows from Proposition
\ref{l2}.\end{proof}

\begin{rk} Let $Z$ be a third variety and $\mu:F(Z)\tto F(Y)$ be
  another place, finite on $Z$; let $g:Y\ttto Z$ be the rational map
  compatible with $\mu$. If $f$ and $g$ are composable, then $g\circ
  f$ is compatible with $\lambda\circ \mu$: this follows easily from
  Proposition \ref{p1}. However it may well happen that $f$ and $g$
  are not composable. For example, assume $Y$ smooth. Given $\mu$,
  hence $g$ (that we suppose not to be a morphism), choose $y\in
  \Exc(g)$ and find a $\lambda$ with centre $y$, for example by the
  method in the proof of Lemma \ref{l6}. Then the rational map $f$
  corresponding to $\lambda$ has image contained in $\Exc(g)$. 
\end{rk}

We conclude this section with a useful lemma which shows that
places rigidify the situation very much.

\begin{lemma}\label{l4}
a) Let $Z,Z'$ be two models of a function field $L$, with $Z'$
separated, and $v$ a valuation of $L$ with centres $z,z'$ respectively on
$Z$ and $Z'$. Assume that there is a birational morphism $g:Z\to Z'$. Then
$g(z)=z'$.\\
b) Consider a diagram
\[\xymatrix{
&Z\ar[dd]^g\\
X\ar[ur]^f\ar[dr]_{f'}\\
&Z'
}\]
with $g$ a birational morphism. Let $K=F(X)$, $L=F(Z)=F(Z')$ and
suppose given a place $v:L\tto K$
compatible both with $f$ and $f'$. Then $f'=g\circ f$. 
\end{lemma}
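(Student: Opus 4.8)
\emph{Part a).} I would argue by composing dominations of local rings and then invoking uniqueness of centres on a separated scheme. Write $\sO=\sO_v$, so that by hypothesis $\sO$ dominates $\sO_{Z,z}$ inside $L$. Since $g$ is a birational morphism of models of $L$, it is an $L$-morphism, and the local homomorphism $\sO_{Z',g(z)}\to \sO_{Z,z}$ induced by $g$ at $z$ is simply the inclusion of these two subrings of $L$, with $\fm_{Z,z}\cap \sO_{Z',g(z)}=\fm_{Z',g(z)}$. Hence $\sO_{Z',g(z)}\subset \sO_{Z,z}\subset \sO$, and intersecting with $\fm_v$ shows that $\sO$ dominates $\sO_{Z',g(z)}$; that is, $g(z)$ is a centre of $v$ on $Z'$. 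As $Z'$ is separated, the valuative criterion of separatedness (recalled in \S\ref{place}) makes the centre unique, so $g(z)=z'$.

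\emph{Part b).} Here the plan is to feed part a) into the formalism of \S\ref{s3}. By Proposition \ref{l2}, compatibility of $f$ (resp. $f'$) with $v$ forces $f(\eta_X)$ (resp. $f'(\eta_X)$) to be the centre $z$ (resp. $z'$) of $\sO_v$ on $Z$ (resp. $Z'$); part a) then gives $(g\circ f)(\eta_X)=g(z)=z'=f'(\eta_X)$. Next, $g$ is dominant, being birational, so by Corollary \ref{c1.1} c) it is compatible with the trivial place $L\tto L$. Since $f$ is compatible with $v$, Proposition \ref{p1} shows that $g\circ f$ is compatible with $v$ (the composite of $v$ with the trivial place being $v$ itself). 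Thus $f'$ and $g\circ f$ are two morphisms $X\to Z'$ both compatible with the single place $v:F(Z')\tto F(X)$, and the final assertion of Proposition \ref{l2} (uniqueness of a map compatible with a given place) yields $f'=g\circ f$.

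Alternatively, one can finish part b) without composing places, directly through Lemma \ref{l1}: the two maps agree on $\eta_X$ by the above, and they induce the same homomorphism on residue fields because $(f')^*$ factors as $F(z')\inj F(v)\by{v}F(X)$ while $(g\circ f)^*=f^*\circ g^*$ factors as $F(z')\by{g^*}F(z)\inj F(v)\by{v}F(X)$, and these coincide by functoriality of residue fields along $\sO_{Z',z'}\subset \sO_{Z,z}\subset \sO_v$.

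I do not expect a serious obstacle: the only points requiring care are, in part a), checking that the homomorphism induced by $g$ is genuinely an inclusion compatible with the two dominations (so that domination composes), and, throughout, keeping track that ``centre'' always refers to the one valuation $\sO_v$ underlying $v$. Granting Propositions \ref{l2} and \ref{p1} and Corollary \ref{c1.1}, the rest is formal.
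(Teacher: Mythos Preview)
Your proof is correct and follows essentially the same approach as the paper. For part a), the paper phrases the argument in terms of the dominant morphism $\Spec\sO_v\to Z$ composed with $g$, while you phrase it as composition of dominations of local rings; these are the same argument in different language. For part b), the paper simply says ``this follows from a) and Proposition \ref{l2}''; your second route (via Lemma \ref{l1} and the residue-field diagram) is exactly the unpacking of that sentence, while your first route through Corollary \ref{c1.1} c) and Proposition \ref{p1} is a mild detour that reaches the same conclusion.
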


\begin{proof} a) Let $f:\Spec \sO_v\to Z$ be the dominant map determined by $z$. Then
$f'=g\circ f$ is a dominant map $\Spec \sO_v\to Z'$. By the valuative
criterion of separatedness, it must correspond to $z'$. b) This
follows from a) and Proposition \ref{l2}.\end{proof}

\section{Places, valuations and the Riemann varieties}\label{pvr}

In this section, we give a second categorical relationship between
the idea of places and that of algebraic varieties. This leads us to
consider Zariski's ``abstract Riemann surface of a field" as a locally
ringed space. We start
by giving the details of this theory, as we could not find it elaborated in the
literature\footnote{Except for a terse allusion in \cite[0.6, p. 146]{hironaka}:
we thank Bernard Teissier for pointing out this reference.}. We remark however that the study of `Riemann-Zariski spaces'
has recently been revived by different authors independently (see \cite
{fk}, \cite{mt1}, \cite{mt2}, \cite{mva}).

\subsection{Strict birational morphisms}

It will be helpful to work here with the following notion of \emph{strict birational morphisms}:
\[\uS_b=\{s\in S_b \mid s\text{ induces an equality of function fields}\}\]

In fact, the difference between $S_b$ and $\uS_b$ is immaterial in view of the following

\begin{lemma} \label{sb=usb} Any birational morphism of (separated) varieties is the composition
of a strict birational morphism and an isomorphism.
\end{lemma}

\begin{proof} Let $s:X\to Y$ be a birational morphism. First assume $X$ and $Y$ affine, with
$X=\Spec A$ and $Y=\Spec B$. Let $K=F(X)$ and $L=F(Y)$, so that $K$ is the quotient field of
$A$ and $L$ is the quotient field of $B$. Let $s^*:L\iso K$ be the isomorphism induced by $s$.
Then $A'\iso A=s^*(A)$, hence $s$ may be factored as $X\by{s'} X'\by{u} Y$ with $X'=\Spec A'$,
where $s'$ is strict birational and $u$ is an isomorphism. In the general case, we may patch
the above construction (which is canonical) over an affine open cover $(U_i)$ of $Y$ and an
affine open cover of $X$ refining $(s^{-1}(U_i))$.
\end{proof}

\subsection{The Riemann-Zariski variety as a locally ringed space}

\begin{defn}\label{dring} We denote by $\sR(F)=\sR$ the full subcategory of
the category
of locally ringed spaces such that $(X,\sO_X)\in \sR$ if and only if
$\sO_X$ is a sheaf of local $F$-algebras.
\end{defn}

(Here, we understand by ``local ring" a commutative ring whose
non-invertible elements form an ideal, but we don't require it to be
Noetherian.)

\begin{lemma}\label{llim} Cofiltering inverse limits exist in $\sR$. More
precisely, if $(X_i,\sO_{X_i})_{i\in I}$ is a cofiltering inverse system
of
objects of $\sR$, its inverse limit is represented by $(X,\sO_X)$ with
$X=\lim X_i$ and $\sO_X=\colim p_i^*\sO_{X_i}$, where $p_i:X\to X_i$ is the
natural projection.
\end{lemma}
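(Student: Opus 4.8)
The plan is to construct the limit object explicitly as $(X,\sO_X)$, where $X=\lim X_i$ is the inverse limit in topological spaces and $\sO_X=\colim_i p_i^*\sO_{X_i}$ is the filtered colimit, in the category of sheaves of rings on $X$, of the pullbacks along the projections $p_i:X\to X_i$; one then checks two things, namely that this object genuinely lies in $\sR$ and that it satisfies the universal property of the inverse limit \emph{within} $\sR$. The space $X$ exists as a subspace of $\prod_i X_i$, the projections $p_i$ are continuous, and each transition morphism $\phi:X_j\to X_i$ of the system, together with its comparison map $\phi^*\sO_{X_i}\to\sO_{X_j}$, pulls back along $p_j$ to make $\{p_i^*\sO_{X_i}\}_i$ a filtered system of sheaves of $F$-algebras on $X$ (using $p_i=\phi\circ p_j$); its colimit $\sO_X$ is a sheaf of $F$-algebras equipped with canonical maps $p_i^*\sO_{X_i}\to\sO_X$.

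The crux is the stalk computation. Since inverse image preserves stalks and filtered colimits of sheaves commute with passage to stalks, for a point $x=(x_i)\in X$ one gets
\[(\sO_X)_x=\colim_i (p_i^*\sO_{X_i})_x=\colim_i (\sO_{X_i})_{x_i}.\]
Because the $(X_i,\sO_{X_i})$ lie in $\sR$, each $(\sO_{X_i})_{x_i}$ is a local $F$-algebra, and because the transition maps are morphisms of locally ringed spaces, the induced homomorphisms $(\sO_{X_i})_{x_i}\to(\sO_{X_j})_{x_j}$ are \emph{local}. I would then prove the elementary fact that a filtered colimit $R=\colim_i R_i$ of local rings along local homomorphisms is again local: an element of $R$ is represented by some $a_i\in R_i$, which is either a unit (hence a unit in $R$) or lies in $\fm_i$, and locality of the transition maps forces the images of the $\fm_i$ to consist of non-units, so $\fm=\colim_i\fm_i$ is the unique maximal ideal. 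This shows $(X,\sO_X)\in\sR$, and the same bookkeeping shows each structure map $(\sO_{X_i})_{x_i}\to(\sO_X)_x$ is local, so each $p_i$ is a morphism in $\sR$.

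For the universal property, given $(Y,\sO_Y)\in\sR$ and a compatible family $f_i:Y\to X_i$ of morphisms in $\sR$, the underlying continuous maps assemble to a unique continuous $f:Y\to X$ with $p_i\circ f=f_i$. As the inverse image of sheaves of rings is a left adjoint, it commutes with the colimit, so $f^*\sO_X=\colim_i f^*p_i^*\sO_{X_i}=\colim_i f_i^*\sO_{X_i}$, and the sheaf maps $f_i^\#:f_i^*\sO_{X_i}\to\sO_Y$ induce a unique $f^\#:f^*\sO_X\to\sO_Y$. On stalks $f^\#$ is the filtered colimit of the local homomorphisms $(\sO_{X_i})_{x_i}\to(\sO_Y)_y$, hence local by the same criterion, so $f$ is a morphism in $\sR$. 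Conversely a morphism $f:(Y,\sO_Y)\to(X,\sO_X)$ yields the compatible family $f_i=p_i\circ f$, and one checks that these two assignments are mutually inverse, giving $\Hom_\sR(Y,X)\cong\lim_i\Hom_\sR(Y,X_i)$.

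The main obstacle is concentrated entirely in the locality statements: the construction of $X$ and $\sO_X$ is the evident one, and the real content is to verify that the stalks of $\sO_X$ are local and that the projections $p_i$ together with the comparison map $f^\#$ induce \emph{local} homomorphisms on stalks, i.e.\ are genuine morphisms in $\sR$. This is precisely where the hypothesis that all transition maps—and the given $f_i$—are morphisms of locally ringed spaces, and not merely of ringed spaces, is indispensable: without locality of the transition homomorphisms, the filtered colimit of the $R_i$ need not be local, and the comparison maps need not respect maximal ideals.
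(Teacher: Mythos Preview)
Your proof is correct and follows essentially the same approach as the paper's: the paper's proof is a terse sketch invoking exactly the two facts you develop in detail, namely that a filtered colimit of local rings along local homomorphisms is local (giving $(X,\sO_X)\in\sR$), and that inverse images of sheaves commute with direct limits (giving the universal property on the sheaf level). Your version is a faithful elaboration of that sketch, with the additional care of verifying locality of the stalk maps for $p_i$ and $f$ made explicit.
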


\begin{proof}[Sketch] Since a filtering direct limit of local rings for
local homomorphisms is local, the object of the lemma belongs to $\sR$ and
we are left to show that it satisfies the universal property of inverse
limits in $\sR$. This is clear on the space level, while on the sheaf level
it follows from the fact that inverse images of sheaves commute with
direct limits.
\end{proof}

Recall from Zariski-Samuel \cite[Ch. VI, \S 17]{zs} the
\emph{abstract Riemann surface} $S_K$ of a function field $K/F$: as a set,
it consists of all nontrivial valuations on $K$ which are trivial on $F$.
It is topologised by the following basis $\sE$ of open sets: if $R$ is a
subring of $K$, finitely generated over $F$, $E(R)\in\sE$ consists of all
valuations $v$ such that $\sO_v\supseteq R$.

As has become common practice, we slightly modify this definition:

\begin{defn} The \emph{Riemann variety} $\Sigma_K$ of $K$ is the following
ringed space:
\begin{itemize}
\item As a topological space, $\Sigma_K=S_K\cup\{\eta_K\}$ where $\eta_K$
is the trivial valuation of $K$. (The topology is defined as for $S_K$.) 
\item The set of sections over $E(R)$ of the structural sheaf of $\Sigma_K$ is
the intersection $\bigcap\limits_{v\in E(R)} \sO_v$, \ie the integral
closure of $R$.
\end{itemize}
\end{defn}

\begin{lemma}\label{lstalk} The stalk at $v\in \Sigma_K$ of the structure
sheaf is $\sO_v$. In particular, $\Sigma_K\in\sR$.
\end{lemma}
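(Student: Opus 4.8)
The plan is to compute the stalk directly from the description of the structure sheaf on the basis $\sE$. First I would identify the basic open neighbourhoods of a point $v\in\Sigma_K$: by definition $v\in E(R)$ exactly when $R\subseteq\sO_v$, so the basic opens containing $v$ are the $E(R)$ with $R$ a finitely generated $F$-subalgebra of $\sO_v$. These form a directed system (ordered by inclusion of the $R$'s, equivalently by reverse inclusion of the opens), since for two such subalgebras $R_1,R_2$ the subalgebra $\langle R_1,R_2\rangle$ they generate is again finitely generated and contained in $\sO_v$, with $E(R_1)\cap E(R_2)=E(\langle R_1,R_2\rangle)$. Being cofinal in the neighbourhood filter of $v$, this system computes the stalk as the filtered colimit $\colim_{R\subseteq\sO_v}\Gamma(E(R))$.

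By the definition of $\Sigma_K$, the sections $\Gamma(E(R))$ are the integral closure $\overline R$ of $R$ in $K$, via the Zariski--Samuel identity $\bigcap_{v\in E(R)}\sO_v=\overline R$. For $E(R')\subseteq E(R)$ the restriction map is simply the inclusion $\overline R\hookrightarrow\overline{R'}$, so the colimit is the honest directed union $\bigcup_{R\subseteq\sO_v}\overline R$ taken inside $K$. I would then establish the two inclusions. Since a valuation ring is integrally closed in its fraction field and $R\subseteq\sO_v$, every $\overline R$ is contained in $\sO_v$, giving $\bigcup_R\overline R\subseteq\sO_v$. Conversely, given $x\in\sO_v$, the subalgebra $F[x]$ is finitely generated and contained in $\sO_v$, and $x\in F[x]\subseteq\overline{F[x]}$; hence $x$ lies in the union. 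This yields that the stalk equals $\sO_v$.

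Finally, the assertion $\Sigma_K\in\sR$ is immediate once the stalks are known: each $\sO_v$ is a valuation ring, hence a local $F$-algebra, so the structure sheaf is a sheaf of local $F$-algebras and $\Sigma_K$ is a locally ringed space of the required type. The only points demanding a little care --- the main (modest) obstacle --- are the bookkeeping that the presheaf prescribed on the basis $\sE$ is well defined and genuinely a sheaf (so that the word ``stalk'' is legitimate and is computed by the colimit above), together with the appeal to the Zariski--Samuel identity $\Gamma(E(R))=\overline R$. Once these are in place, everything reduces to the elementary directed-union argument, with integral closedness of valuation rings supplying one inclusion and the choice $R=F[x]$ supplying the other.
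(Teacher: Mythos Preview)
Your proof is correct and follows essentially the same approach as the paper: the key step in both is observing that for $x\in\sO_v$ the ring $F[x]$ is a finitely generated $F$-subalgebra of $\sO_v$, which gives the nontrivial inclusion. Your write-up is in fact more careful than the paper's very terse argument, since you make explicit that the sections over $E(R)$ are the integral closure $\overline R$ (not $R$ itself) and then use integral closedness of $\sO_v$ to get $\overline R\subseteq\sO_v$; the paper's proof conflates $\colim R$ with the stalk, which is harmless only because of the sandwich $R\subseteq\overline R\subseteq\sO_v$ that you spell out.
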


\begin{proof} Let $x_1,\dots,x_n\in \sO_v$. The subring $F[x_1,\dots,x_n]$ is finitely generated and
contained in $\sO_v$, thus $\sO_v$ is the filtering direct limit of the $R$'s
such that $v\in E(R)$.
\end{proof}

Let $R$ be a finitely generated $F$-subalgebra of $K$. We have a canonical
morphism of locally ringed spaces $c_R:E(R)\to \Spec R$ defined as
follows: on points we map $v\in E(R)$ to
its centre $c_R(v)$ on $\Spec R$. On the sheaf level, the map is defined by
the inclusions $\sO_{X,c_X(v)}\subset \sO_v$.

We now reformulate \cite[p. 115 ff]{zs} in scheme-theoretic language. Let
$X\in\Var$ be provided with a dominant
morphism $\Spec K\to X$ such that the corresponding field homomorphism
$F(X)\to K$ is an inclusion (as opposed to a monomorphism). We call such
an $X$ a \emph{Zariski-Samuel model} of $K$;
$X$ is a \emph{model} of $K$ if, moreover, $F(X)=K$. Note that
Zariski-Samuel models of $K$ form a cofiltering ordered set. Generalising
$E(R)$, we may define
$E(X)=\{v\in\Sigma_K\mid v\text{ is finite on } X\}$ for a Zariski-Samuel model of $K$;
this is still an open subset of $\Sigma_K$, being  the union of the $E(U_i)$,
where $(U_i)$ is some finite affine open cover of $X$. We still have a
morphism of locally ringed spaces $c_X:E(X)\to X$ defined by glueing the
affine ones. If $X$ is proper, $E(X)=\Sigma_K$ by the valuative criterion
of properness. Then:

\begin{thm}[Zariski-Samuel]\label{tzariski} 
The
induced morphism of ringed spa\-ces
\[\Sigma_K\to\lim X\]
where $X$ runs through the proper Zariski-Samuel models of $K$, is an
isomorphism in $\sR$. The generic point $\eta_K$ is dense in $\Sigma_K$.\\
\end{thm}

\begin{proof} Zariski and Samuel's
theorem \cite[th. VI.41 p. 122]{zs} says that the underlying morphism of
topological spaces is a homeomorphism; thus, by Lemma
\ref{llim}, we only need
to check that the structure sheaf of $\Sigma_K$ is the direct limit of the
pull-backs of those of the $X$. This amounts to showing that, for
$v\in\Sigma_K$, $\sO_v$ is the direct limit of the $\sO_{X,c_X(v)}$.

We argue essentially as in \cite[pp. 122--123]{zs} (or as in the proof of
Lemma \ref{lstalk}). Let
$x\in \sO_v$, and let $X$ be the projective Zariski-Samuel model determined by
$\{1,x\}$ as in \loccit, bottom p. 119, so that either $X\simeq
\P^1_F$ or $X=\Spec F'$ where $F'$ is a finite extension of $F$
contained in $K$. In both cases, $c=c_X(v)$ 
actually belongs to $\Spec F[x]$ and $x\in \sO_{X,c}\subset \sO_v$. 

Finally, $\eta_K$ is contained in every basic open set, therefore is dense
in $\Sigma_K$.
\end{proof}

\begin{defn}\label{dring1} Let $\sC$ be a full subcategory of $\Var$. We
denote by
$\hat{\sC}$ the full subcategory of $\sR$ whose objects
are cofiltered inverse limits of objects of $\sC$ under morphisms of
$\uS_b$
(\cf \S \ref{zoo1}). The natural inclusion $\sC\subset \hat\sC$ is denoted by $J$.
\end{defn}

Note that, for any function field $K/F$,
$\Sigma_K\in \widehat{\Var^\proper}$ by Theorem \ref{tzariski}. Also, for any $X\in \widehat{\Var}$, the function field $F(X)$ is well-defined. 

\begin{lemma}\label{lbasis} Let $X\in \widehat{\Var}$ and $K=F(X)$. \\
a) For a
finitely generated $F$-algebra
$R\subset K$, the set
\[E_X(R)=\{x\in X\mid R\subset O_{X,x}\}\]
is an open subset of $X$. These open subsets form a basis for the topology
of $X$.\\
b) The generic point $\eta_K\in X$ is dense in $X$, and $X$ is
quasi-compact.
\end{lemma}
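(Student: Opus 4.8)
The plan is to work directly from the explicit description of $X$ given by Lemma~\ref{llim}. Write $X=\lim_i X_i$ as a cofiltered inverse limit over $\uS_b$ as in Definition~\ref{dring1}, with projections $p_i\colon X\to X_i$. Since the transition morphisms lie in $\uS_b$ they are dominant and induce \emph{equalities} of function fields, so every local ring in sight is a subring of the single field $K$ and the transition maps induce inclusions among them. Lemma~\ref{llim} then gives, for $x\in X$,
\[\sO_{X,x}=\colim_i \sO_{X_i,p_i(x)}=\bigcup_i \sO_{X_i,p_i(x)}\subseteq K.\]
From this, openness in a) is routine: if $R=F[r_1,\dots,r_n]$, then $x\in E_X(R)$ iff each $r_k$ lies in $\bigcup_i\sO_{X_i,p_i(x)}$, and as $R$ is finitely generated and the system cofiltered this holds iff $R\subseteq\sO_{X_i,p_i(x)}$ for some $i$. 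Hence $E_X(R)=\bigcup_i p_i^{-1}(E_{X_i}(R))$, and since each $E_{X_i}(R)=\bigcap_k\{y\mid r_k\in\sO_{X_i,y}\}$ is open in the variety $X_i$ (a finite intersection of domains of definition of rational functions), $E_X(R)$ is open.

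The substantial point in a) is that these sets form a \emph{basis}. Using that the system is cofiltered, the inverse limit topology has as a basis the sets $p_i^{-1}(U)$ with $U$ an affine open of $X_i$, so it suffices to realise each such set as some $E_X(R)$. I claim that if $U=\Spec A$ is affine open in $X_i$ (so $A$ is a finitely generated $F$-algebra with fraction field $K$), then
\[E_X(A)=p_i^{-1}(U).\]
The key sublemma is that for every transition map $\pi\colon X_j\to X_i$ one has $E_{X_j}(A)=\pi^{-1}(U)$; granting this for all $j\ge i$, the increasing union computing $E_X(A)$ collapses to $p_i^{-1}(U)$. In the sublemma the inclusion $\pi^{-1}(U)\subseteq E_{X_j}(A)$ is immediate from $\sO_{X_i,\pi(z)}\subseteq\sO_{X_j,z}$. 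For the converse, given $z$ with $A\subseteq\sO_{X_j,z}$, the inclusion $A\hookrightarrow\sO_{X_j,z}$ together with the structural map produce two morphisms $\Spec\sO_{X_j,z}\rightrightarrows X_i$ (one factoring through $U$, the other through $\pi$) which agree at the generic point and induce the identity on $K$; since $X_i$ is separated, Lemma~\ref{l1} forces them to coincide, whence $\pi(z)\in U$.

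I expect this separatedness argument to be the main obstacle, since it is exactly what keeps the regularity locus of $A$ from growing as one passes to finer models, and hence is the reason the limit topology is no finer than the one generated by the $E_X(R)$. Note also that a sub-collection of the $E_X(R)$ (those with $R=A$ an affine coordinate ring) already exhausts a basis, so the full collection is a fortiori a basis.

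For b), the compatible system of generic points $(\eta_{X_i})$ defines a point $\eta\in X$ with $p_i(\eta)=\eta_{X_i}$ and $\sO_{X,\eta}=\bigcup_i K=K$, so $\eta=\eta_K$. Since $R\subseteq K=\sO_{X,\eta}$ for every $R$, the point $\eta$ lies in every basic open $E_X(R)$, hence in every nonempty open set, and is therefore dense. Finally $X=\lim_i X_i$ is a cofiltered limit of spectral spaces (the underlying spaces of the varieties $X_i$) along spectral maps, hence is spectral by Hochster's theorem and in particular quasi-compact; alternatively one argues directly via the finite intersection property from the quasi-compactness of each $X_i$.
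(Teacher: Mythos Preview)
Your proof is correct and follows the same route as the paper: the openness argument in a) and all of b) are substantively identical (the paper just says ``$X$ is quasi-compact, being the inverse limit of quasi-compact spaces'' without naming Hochster).

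The one place where you do more is the basis claim. The paper's proof is terse there: it observes that for $x\in p_\alpha^{-1}(U)$ with $U=\Spec R$ affine one has $x\in E_X(R)$, and concludes. Read literally this only shows that the $E_X(R)$ cover $X$; what is tacitly being used is precisely your equality $E_X(A)=p_i^{-1}(U)$, so that the $E_X(R)$ contain the standard basis $\{p_\alpha^{-1}(U)\}$. Your separatedness argument via Lemma~\ref{l1} makes this step explicit and is the correct justification. One cosmetic remark: Lemma~\ref{l1} is stated for varieties, so strictly speaking apply it to the two morphisms $E_{X_j}(A)\rightrightarrows X_i$ (an open subvariety of $X_j$) rather than to $\Spec\sO_{X_j,z}$; the conclusion at $z$ is the same.
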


\begin{proof} a) If $X$ is a variety, then $E_X(R)$ is open, being the set
of definition of the rational map $X\ttto\Spec R$ induced by the inclusion
$R\subset K$. In general, let $(X,\sO_X)=\lim_\alpha
(X_\alpha,\sO_{X_\alpha})$ with the $X_\alpha$ varieties and let
$p_\alpha:X\to X_\alpha$ be the projection. Since $R$ is finitely
generated, we have \[E_X(R)=\bigcup_\alpha p_\alpha^{-1}(E_{X_\alpha}(R))\]
which is open in $X$. 

Let $x\in X$: using Lemma \ref{llim}, we can find an $\alpha$ and an affine
open $U\subset X_\alpha$ such that $x\in p_\alpha^{-1}(U)$. Writing
$U=\Spec R$, we see that $x\in E_X(R)$, thus the $E_X(R)$ form a basis of
the topology of $X$.

In b), the density follows from a) since clearly $\eta_K \in E_X(R)$ 
for every $R$. The space $X$ is a limit of spectral spaces under spectral 
maps, and hence quasi-compact. Alternately, $X$ is compact in the constructible 
topology as compactness is preserved under inverse limits, and hence quasi-compact 
in the weaker Zariski topology.
\end{proof}


We are grateful to M. Temkin for pointing out an error in 
our earlier proof of quasi-compactness and providing the proof of b) above.

\begin{thm}\label{plim} Let $X=\lim X_\alpha$, $Y=\lim Y_\beta$ be two
objects of
$\widehat{\Var}$. Then we have a canonical isomorphism
\[\widehat{\Var}(X,Y)\simeq \lim_\beta\colim_\alpha \Var(X_\alpha,Y_\beta).\]
\end{thm}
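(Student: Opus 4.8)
The plan is to strip off the two limits separately. Since $\widehat{\Var}\subseteq\sR$ is full and, by Lemma \ref{llim}, $Y=\lim_\beta Y_\beta$ is the genuine inverse limit of the $Y_\beta$ in $\sR$, the universal property of that limit gives formally
\[\widehat{\Var}(X,Y)=\Hom_\sR\bigl(X,\textstyle\lim_\beta Y_\beta\bigr)=\lim_\beta\Hom_\sR(X,Y_\beta).\]
This reduces the statement to the case of a single variety target $Z=Y_\beta$, where one must show that the canonical map $\colim_\alpha\Var(X_\alpha,Z)\to\Hom_\sR(X,Z)$ is bijective; here $\Var(X_\alpha,Z)=\Hom_\sR(X_\alpha,Z)$ because schemes sit fully faithfully inside $\sR$. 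Thus the outer $\lim_\beta$ is purely formal, and all the work lies in the inner $\colim_\alpha$.

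For injectivity I would exploit that $X$ is irreducible with dense generic point $\eta_K$ (Lemma \ref{lbasis}) and that $Z$ is separated. Restriction to $\eta_K$ produces compatible maps from both sides to $\Hom_\sR(\Spec K,Z)$, where $K=F(X)$. Lemma \ref{l1} shows that each morphism $X_\alpha\to Z$ is determined by its value at the generic point, so $\colim_\alpha\Var(X_\alpha,Z)\hookrightarrow\Hom_\sR(\Spec K,Z)$; the same density-plus-separatedness argument, now applied to $X$ itself, shows $\Hom_\sR(X,Z)\hookrightarrow\Hom_\sR(\Spec K,Z)$. Injectivity of $\colim_\alpha\Var(X_\alpha,Z)\to\Hom_\sR(X,Z)$ follows.

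Surjectivity requires more. Given $h\colon X\to Z$, I would first reduce to $Z$ affine: a finite affine cover $Z=\bigcup_jV_j$ yields an open cover $\{h^{-1}(V_j)\}$ of the quasi-compact space $X$, which by Lemma \ref{lbasis} descends to an open cover of some $X_{\alpha_0}$, and it then suffices to descend each restriction $h^{-1}(V_j)\to V_j$ and glue the pieces by the injectivity above. For $Z=\Spec B$ affine one has $\Hom_\sR(-,\Spec B)=\Hom_F(B,\Gamma(-,\sO))$ ($F$-algebra homomorphisms into global sections), and since $B$ is finitely presented $\Hom_F(B,-)$ commutes with filtered colimits; everything thus comes down to the identity
\[\Gamma(U,\sO_X)=\colim_\alpha\Gamma(U_\alpha,\sO_{X_\alpha})\]
for an open $U=\lim_\alpha U_\alpha\subseteq X$ with $U_\alpha\subseteq X_\alpha$ open. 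As all these rings are subrings of $K$—the stalks being $\sO_{U,x}=\colim_\alpha\sO_{X_\alpha,p_\alpha(x)}$—injectivity is clear, and the content is that a rational function $g\in K$ lying in every stalk $\sO_{U,x}$ is already regular everywhere on some $U_\alpha$.

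This last point is where I expect the real difficulty, and the main obstacle, to lie. Writing $E_\alpha\subseteq U_\alpha$ for the closed indeterminacy locus of $g$, compatibility of the domains along the transition maps $f_{\alpha'\alpha}$ makes $\{p_\alpha^{-1}(E_\alpha)\}$ a filtered decreasing family of closed subsets of $U$ whose intersection is empty (this is exactly the hypothesis that $g\in\sO_{U,x}$ for all $x$). Quasi-compactness of $U$ (Lemma \ref{lbasis}) then forces $p_{\alpha_0}^{-1}(E_{\alpha_0})=\emptyset$ for some $\alpha_0$, i.e.\ $E_{\alpha_0}\cap\IM(p_{\alpha_0})=\emptyset$. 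The obstacle is that this controls only the part of $E_{\alpha_0}$ lying over $X$: to reach $E_{\alpha'}=\emptyset$ I must also discard the points of $E_{\alpha_0}$ outside $\IM(p_{\alpha_0})=\bigcap_{\alpha'\ge\alpha_0}\IM(f_{\alpha'\alpha_0})$. Since these images are pro-constructible (Chevalley) and decreasing, a compactness argument in the constructible topology of the spectral space $X_{\alpha_0}$ produces a single $\alpha'$ with $E_{\alpha_0}\cap\IM(f_{\alpha'\alpha_0})=\emptyset$, whence $g$ is regular on all of $U_{\alpha'}$, as required. When the transition maps are proper, as for systems of proper models (so for $\Sigma_K$), the projections are surjective and this second step is vacuous; it is needed only to accommodate the open immersions allowed in $\uS_b$.
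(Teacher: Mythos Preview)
Your argument is correct. The paper takes a more direct route for surjectivity: instead of reducing to an affine target, it first reduces to \emph{dominant} $f$ by replacing $Y$ with the closure of $f(\eta_K)$, and then works with the rational maps $f_\alpha\colon X_\alpha\ttto Y$ induced at the generic point. If $U_\alpha$ denotes the domain of $f_\alpha$, a pointwise argument shows that the $p_\alpha^{-1}(U_\alpha)$ form an increasing open cover of $X$, and quasi-compactness gives $X=p_{\alpha_0}^{-1}(U_{\alpha_0})$ for some $\alpha_0$. This bypasses the $\Spec$/$\Gamma$ adjunction and the gluing over an affine cover; note incidentally that your claim that the cover ``descends to an open cover of some $X_{\alpha_0}$'' is not a direct consequence of Lemma~\ref{lbasis} and itself hides an instance of the constructible argument you give at the end, so the affine reduction buys less than it might appear. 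On the other hand, your final constructible-topology step is genuinely needed: the paper concludes from $X=p_{\alpha_0}^{-1}(U_{\alpha_0})$ that $f$ ``factors through $X_{\alpha_0}$'', but literally one only obtains a factorisation through the open $U_{\alpha_0}\subseteq X_{\alpha_0}$, and producing an $\alpha'\ge\alpha_0$ with $s_{\alpha'\alpha_0}(X_{\alpha'})\subseteq U_{\alpha_0}$ requires exactly the Chevalley/compactness argument you supply.
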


\begin{proof} Suppose first that $Y$ is constant. We then have an obvious
map
\[\colim_\alpha \Var(X_\alpha,Y)\to \widehat{\Var}(X,Y).\]

Injectivity follows from Lemma \ref{l1}. For surjectivity, let $f:X\to Y$
be a morphism. Let $y=f(\eta_K)$. Since $\eta_K$ is dense in $X$ by Lemma
\ref{lbasis} b), $f(X)\subseteq \overline{\{y\}}$. This reduces us to the
case where $f$ is \emph{dominant}.

Let $x\in X$ and $y=f(x)$. Pick an affine open neighbourhood $\Spec R$ of
$y$ in $Y$. Then $R\subset \sO_{X,x}$, hence $R\subset
\sO_{X_\alpha,x_\alpha}$ for some $\alpha$, where $x_\alpha=p_\alpha(x)$, 
$p_\alpha:X\to X_\alpha$ being the canonical projection.
This shows that the rational
map $f_\alpha:X_\alpha\ttto Y$ induced by restricting $f$ to the generic
point is defined at $x_\alpha$ for $\alpha$ large enough.

Let $U_\alpha$ be the set of definition of $f_\alpha$. We have just shown
that $X$ is the increasing union of the open sets 
$p_\alpha^{-1}(U_\alpha)$. Since $X$ is
quasi-compact, this implies that $X=p_\alpha^{-1}(U_\alpha)$ for some
$\alpha$, \ie that $f$ factors through $X_\alpha$ for this value of
$\alpha$.

In general we have
\[\widehat{\Var}(X,Y)\iso \lim_\beta\widehat{\Var}(X,Y_\beta)\]
by the universal property of inverse limits, which completes the proof.
\end{proof}

\begin{rk} Let $\pro[\uS_b]\Var$ be the full subcategory of
the
category of pro-objects of $\Var$ consisting of the $(X_\alpha)$ in
which the transition maps $X_\alpha\to X_\beta$ are strict birational
morphisms. Then Theorem \ref{plim} may be reinterpreted as saying that
the functor
\[\lim:\pro[\uS_b]\Var\to \widehat{\Var}\]
is an \emph{equivalence of categories}.
\end{rk}

\subsection{Riemann varieties and places}\label{s3.3}

We are going to study two functors
\begin{gather*}
\Spec:\field^\op\to\widehat{\Var}\\
\Sigma:\place^\op\to\widehat{\Var}
\end{gather*}
and a natural transformation $\eta:\Spec\Rightarrow\Sigma\circ \iota$, where $\iota$ is the embedding $\field^\op\inj \place^\op$.

The first functor is simply $K\mapsto\Spec K$. The second one
maps $K$ to the Riemann variety $\Sigma_K$. Let $\lambda:K\tto L$ be an
$F$-place. We define $\lambda^*:\Sigma_L\to\Sigma_K$ as follows: if $w\in
\Sigma_L$, we may consider the associated place $\tilde w:L\tto F(w)$; then
$\lambda^*w$ is the valuation underlying $\tilde w\circ \lambda$. 

Let $E(R)$ be a basic open subset of $\Sigma_K$. Then 
\[(\lambda^*)^{-1}(E(R))=
\begin{cases}
\emptyset&\text{if $R\nsubseteq \sO_\lambda$}\\
E(\lambda(R))&\text{if $R\subseteq \sO_\lambda$.}
\end{cases}
\]

Moreover, if $R\subseteq \sO_\lambda$, then $\lambda$ maps
$\sO_{\lambda^*w}$ to $\sO_w$ for any valuation $w\in (\lambda^*)^{-1}E(R)$. This shows that $\lambda^*$ is continuous and defines a morphism of locally
ringed spaces. We leave it to the reader to check that $(\mu\circ
\lambda)^*=\lambda^*\circ\mu^*$.

Note that we have for any $K$ a morphism of ringed spaces
\begin{equation}\label{eqeta}
\eta_K:\Spec K\to\Sigma_K
\end{equation}
with image the trivial valuation of $\Sigma_K$ (which is its generic
point). This defines the natural transformation $\eta$ we alluded to.

\begin{prop}\label{psigma} The functors $\Spec$ and $\Sigma$ are fully
faithful;
moreover, for any $K,L$, the map
\[\widehat{\Var}(\Sigma_L,\Sigma_K)
\by{\eta_L^*}\widehat{\Var} (\Spec L,\Sigma_K)\]
is bijective.
\end{prop}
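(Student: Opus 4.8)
The plan is to reduce everything to the computation of Hom sets in $\widehat{\Var}$ provided by Theorem \ref{plim}, applied to the presentations $\Sigma_L=\lim_\gamma Z_\gamma$, $\Sigma_K=\lim_\beta Y_\beta$ over proper models coming from Theorem \ref{tzariski}, and then to identify the resulting limits with sets of places. Throughout I use that $\widehat{\Var}$ is a \emph{full} subcategory of $\sR$ (Definition \ref{dring1}), so that its Hom sets are computed as morphisms of locally ringed spaces. Full faithfulness of $\Spec$ is then immediate: $\Spec K$ and $\Spec L$ are one-point objects of $\sR$ with structure rings $K$ and $L$, so $\sR(\Spec K,\Spec L)$ is the set of local $F$-algebra homomorphisms $L\to K$, i.e. of $F$-embeddings of fields $L\inj K$, which is exactly $\field^\op(K,L)$.

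For the bijectivity of $\eta_L^*$, I first record the description of the target. A morphism $\Spec L\to\Sigma_K$ in $\sR$ is the datum of a point $v\in\Sigma_K$, i.e. a valuation of $K/F$ (possibly trivial), together with a local $F$-homomorphism $\sO_{\Sigma_K,v}\to L$; by Lemma \ref{lstalk} this stalk is $\sO_v$, and since $L$ is a field the homomorphism factors as $\sO_v\surj\kappa(v)\inj L$. By the correspondence recalled in \S\ref{place}, this is precisely the datum of an $F$-place $K\tto L$, whence
\[\widehat{\Var}(\Spec L,\Sigma_K)\simeq\place(K,L)=\place^\op(L,K).\]
Under this identification the composite $\place(K,L)\by{\Sigma}\widehat{\Var}(\Sigma_L,\Sigma_K)\by{\eta_L^*}\widehat{\Var}(\Spec L,\Sigma_K)$ sends a place $\lambda$ to $\lambda^*\circ\eta_L$, which one checks to be exactly the morphism attached to $\lambda$ above (its centre is $\sO_\lambda$ and its residue map is $\sO_\lambda\surj\kappa(\lambda)\inj L$). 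Thus this composite is the displayed bijection; consequently, once $\eta_L^*$ is shown to be bijective, the map $\Sigma$ on Hom sets is a composite of two bijections and $\Sigma$ is fully faithful.

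It remains to prove that $\eta_L^*$ is bijective. By Theorem \ref{plim},
\[\widehat{\Var}(\Sigma_L,\Sigma_K)\simeq\lim_\beta\colim_\gamma\Var(Z_\gamma,Y_\beta),\]
and $\eta_L^*$ is induced levelwise by restriction along the generic point $\eta_L$. Since restriction commutes with post-composition by the transition maps of the system $(Y_\beta)$, it suffices to prove that for each fixed proper model $Y_\beta$ of $K$, restriction to $\eta_L$ gives a bijection $\colim_\gamma\Var(Z_\gamma,Y_\beta)\iso Y_\beta(L)$; taking $\lim_\beta$ then yields $\widehat{\Var}(\Sigma_L,\Sigma_K)\simeq\lim_\beta Y_\beta(L)\simeq\widehat{\Var}(\Spec L,\Sigma_K)$, and this isomorphism is $\eta_L^*$. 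Injectivity of the levelwise map is Lemma \ref{l1}: two morphisms $Z_\gamma\to Y_\beta$ from the integral source $Z_\gamma$ to the separated target $Y_\beta$ agreeing at the generic point are equal, and a common index $\gamma$ may be chosen by cofiltration.

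The main obstacle is surjectivity, and it is the only genuinely geometric step. Given an $L$-point of $Y_\beta$, i.e. a morphism $\Spec L\to Y_\beta$, I must extend it to an honest morphism out of some proper model $Z_\gamma$ of $L$. As $Y_\beta$ is of finite type, the $L$-point spreads out to a rational map $f:Z_0\ttto Y_\beta$ from a model $Z_0$ of $L$, defined on a dense open $U$. Applying the graph trick (\S\ref{p1.1}) with $Y_\beta$ proper, I replace $Z_0$ by the closure $\bar\Gamma_f$ of the graph of $f_{\mid U}$: the first projection $\bar\Gamma_f\to Z_0$ is proper birational, hence (by Lemma \ref{sb=usb}, up to isomorphism) belongs to the system of $\uS_b$-morphisms defining $\Sigma_L$, so $\bar\Gamma_f$ is a proper model of $L$ occurring as some $Z_\gamma$; the second projection is a genuine morphism $\bar\Gamma_f\to Y_\beta$ inducing the given $L$-point at the generic point. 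This establishes surjectivity and completes the proof.
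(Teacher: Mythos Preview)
Your proof is correct and follows essentially the same route as the paper's: both identify $\widehat{\Var}(\Spec L,\Sigma_K)$ with $\place(K,L)$ via the stalk description of Lemma~\ref{lstalk}, and both obtain injectivity of $\eta_L^*$ from Theorem~\ref{plim} together with Lemma~\ref{l1}. The one difference is that the paper is more economical about surjectivity of $\eta_L^*$: having shown that the composite $\eta_L^*\circ\Sigma$ is surjective onto $\widehat{\Var}(\Spec L,\Sigma_K)$, surjectivity of $\eta_L^*$ follows immediately, so your separate graph-trick argument---while correct---is redundant given what you had already established in your second paragraph.
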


\begin{proof} The case of $\Spec$ is obvious. For the rest, let
$K,L\in\place(F)$ and consider the composition
\[\place(K,L)\overset{\Sigma}{\longrightarrow}
\widehat{\Var}(\Sigma_L,\Sigma_K)\by{\eta_L^*}\widehat{\Var}(\Spec
L,\Sigma_K).\]

It suffices to show that $\eta_L^*$ is injective and $\eta_L^*\circ \Sigma$ is bijective.

Let $\psi_1,\psi_2\in
\widehat{\Var}(\Sigma_L,\Sigma_K)$ be such that $\eta_L^*\psi_1=\eta_L^*\psi_2$.
Pick a proper model $X$ of $K$; by Theorem \ref{plim}, $c_X\circ \psi_1$
and $c_X\circ \psi_2$ factor through morphisms $f_1,f_2:Y\to X$ for
some model $Y$ of $L$. By Lemma 
\ref{l1}, $f_1=f_2$, hence $c_X\circ\psi_1=c_X\circ\psi_2$ and finally
$\psi_1=\psi_2$ by Theorem \ref{tzariski}.  Thus $\eta_L^*$ is injective.

On the other hand, let $\phi\in \widehat{\Var}(\Spec L,\Sigma_K)$ and
$v=\phi(\Spec L)$: then $\phi$ 
induces a homomorphism $\sO_v\to L$, hence a place
$\lambda:K\tto L$ and clearly $\phi =\eta_L^*\circ\Sigma(\lambda)$. This is the only place mapping to $\phi$. This
shows that the composition $\eta_L^*\circ\Sigma$
is bijective,  which
concludes the proof.
\end{proof}

\section{Two equivalences of categories}\label{s4}

In this section, we compare the localised categories $S_r^{-1}\place$ and a suitable version of $S_b^{-1}\Sm^\proper$ by using the techniques of the previous section.  First, we construct a full and essentially surjective functor 
\[\place_*^\op\to S_b^{-1} \Sm_*^\proper\]
in Corollary \ref{cplsm}, where $\Sm_*^\proper$ is the full subcategory of $\Sm$ formed of smooth varieties having a cofinal system of smooth proper models, and $\place_*\subseteq \place$ is the full subcategory of their function fields. Next, we prove in Theorem \ref{t3.2} that a suitable version of the functor $\Phi_1$ of \eqref{l3} becomes an equivalence of categories after we invert birational morphisms.

\subsection{The basic diagram} 
We start from the commutative diagram of functors 
\begin{equation}\label{eq4.4a}
\xymatrix{
&\VarP\ar[dr]^{\Phi_2}\ar[dl]_{\Phi_1}\\
\place^\op\ar[dr]^\Sigma && \Var\ar[dl]_J\\
&\widehat{\Var}
}
\end{equation}
where $\Phi_1$, $\Phi_2$ are the two forgetful functors of \eqref{l3}. Note that $\Sigma$ takes values in $\widehat{\Var^\proper}$, so this diagram restricts to a similar diagram where $\Var$ is replaced by $\Var^\proper$.

We can extend the birational morphisms $S_b$ to the categories appearing in this diagram:

\begin{defn}[\cf Theorem \protect{\ref{plim}}]\label{d4.1} Let
$X,Y\in\widehat{\Var}$, with $X=$ \allowbreak$\lim X_\alpha$, $Y=\lim
Y_\beta$. A morphism $s:X\to Y$ is \emph{birational} if, for each $\beta$,
the projection $X\by{s}Y\to Y_\beta$
factors through a birational map $s_{\alpha,\beta}:X_\alpha\to
Y_\beta$ for some $\alpha$ (this does not depend on the choice of
$\alpha$). We denote by $S_b\subset Ar(\widehat{\Var})$ the collection of
these morphisms.

In $\Var \P$, we write $S_b$ for the set of morphisms of the form $(u,f)$ where $u$ is an isomorphism of function fields and $f$ is a birational morphism. In $\place$, we take for $S_b$ the set of isomorphisms.
\end{defn}

\subsection{Main results}

\begin{defn}\label{d3.1} Let  
\begin{itemize}
\item $\place_*$ be the full subcategory of $\place$ formed of function fields which have a cofinal system of smooth proper models.  
\item $\Sm^\proper_*\subseteq \Sm^\proper$ be the full subcategory of those $X$ such that, for any $Y\in \Var^\proper$ birational to $X$, there exists $X'\in \Sm^\proper$ and a (proper) birational morphism $s:X'\to Y$.
\end{itemize}
\end{defn}

Note that $\Sm^\proper_*=\Sm^\proper$ in characteristic $0$ and that $X\in \Sm^\proper\allowbreak\Rightarrow X\in \Sm^\proper_*$ if $\dim X\le 2$ in any characteristic. On the other hand, it is not clear whether $\Sm_*^\proper$ is closed under products, or even under product with $\P^1$.


The following lemma is clear:

\begin{lemma}\label{l4.0}a)  If $X,X'\in \Sm^\proper$ are birational, then $X\in \Sm^\proper_*\allowbreak\iff X'\in \Sm^\proper_*$.\\
b) $K\in \place_*\iff$  $K$ has a model in $\Sm_*^\proper$, and then any smooth proper model of $K$ is in $\Sm_*^\proper$.\qed
\end{lemma}

If $X\in \Sm^\proper_*$, we have $F(X)\in\place_*$, hence 
with these definitions, \eqref{eq4.4a} induces a commutative diagram of localised categories:
\begin{equation}\label{eq4.4b}
\xymatrix{
&S_b^{-1}\Sm^\proper_* \P\ar[dr]^{\overline \Phi_2^*}\ar[dl]_{\overline \Phi_1^*}\\
\place_*^\op\ar[dr]^{\bar \Sigma} && S_b^{-1}\Sm^\proper_*\ar[dl]_{\bar J}\\
&S_b^{-1}\widehat{\Sm^\proper_*}
}
\end{equation}


\begin{thm}\label{t3.2} In \eqref{eq4.4b}, $\bar J$ and $\overline\Phi_1^*$ are equivalences of categories.
\end{thm}

Composing $\bar \Sigma$ with a quasi-inverse of $\bar J$, we get a functor
\begin{equation}\label{psism}
\Psi_*:\place_*^\op\to S_b^{-1}\Sm^\proper_*.
\end{equation}

This functor is well-defined up to unique natural isomorphism, by the essential uniqueness of a quasi-inverse to $\bar J$.

\begin{thm} \label{cplsm} a) The functor $\Psi_*$  is   full and essentially surjective.\\
 b) Let $K,L\in \place_*$ and $\lambda,\mu\in \place_*(K,L)$. Suppose that $\lambda$ and $\mu$ have the same centre on some model $X\in \Sm_*^\proper$ of $K$. Then $\Psi_*(\lambda)=\Psi_*(\mu)$. \\
c) Let  $S_r\subset \place_*$ denote the set of field extensions $K\inj K(t)$ such that $K\in \place_*$ and $K(t)\in \place_*$.
Then the composition $\place_*^\op\by{\Psi_*} S_b^{-1}\Sm^\proper_*\to S_b^{-1}\Sm$ factors through a (full) functor, still denoted by $\Psi_*$:
\[
\Psi_*:S_r^{-1}\place_*^\op\to S_b^{-1}\Sm.
\]
\end{thm}

The proofs of Theorems \ref{t3.2} and \ref{cplsm} go in several steps, which are given in the next subsections.

\subsection{Proof of Theorem \ref{t3.2}: the case of $\bar J$}


We apply Proposition 5.10 b) of
\cite{loc}. To lighten
notation we drop the functor $J$. We have to check
Conditions (b1), (b2) and (b3) of \loccit, namely:
\begin{itemize}
\item[(b1)] \emph{Given two maps $X\begin{smallmatrix}f\\ \rr\\ 
g\end{smallmatrix} Y$
in $\Sm^\proper_*$ and a map $s:Z=\lim Z_\alpha\to X$ in $S_b\subset
\widehat{\Sm^\proper_*}$, $fs=gs\Rightarrow f=g$.} This is clear by Lemma
\ref{l1}, since by
Theorem \ref{plim} $s$ factors through some $Z_\alpha$, with
$Z_\alpha\to X$ 
birational.
\item[(b2)] \emph{For any $X=\lim X_\alpha\in \widehat{\Sm^\proper_*}$, there
exists a birational morphism $s:X\to X'$ with $X'\in \Sm^\proper_*$.} It
suffices to take $X'=X_\alpha$ for some $\alpha$. 
\item[(b3)] \emph{Given a diagram
\[\begin{CD}
X_1\\
@A{s_1}AA\\
X=\lim X_\alpha @>f>> Y
\end{CD}\]
with $X\in \widehat{\Sm^\proper_*}$, $X_1,Y\in \Sm^\proper_*$ and $s_1\in S_b$,
there exists $s_2:X\to X_2$ in $S_b$, with $X_2\in \Sm^\proper_*$, covering
both $s_1$ and $f$.} Again, it suffices to take $X_2=X_\alpha$ for $\alpha$
large enough (use Theorem \ref{plim}).
\end{itemize} 

\subsection{Calculus of fractions}

\begin{prop}\label{l8a} The category $\Sm^\proper_*\P$ admits a calculus  of right fractions with respect to $S_b^p$. In particular, in $(S_b^p)^{-1}\Sm^\proper_*\P$, any  morphism may be written in the form $fp^{-1}$ with $p\in S_b^p$. The latter also holds in $(S_b^p)^{-1}\Sm^\proper_*$.
\end{prop}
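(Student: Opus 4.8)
The plan is to verify the three Gabriel--Zisman axioms for a calculus of right fractions (\cite[I.2]{gz}) for the class $S_b^p$ inside $\sC P(F)$. First I would record that $S_b^p$ is closed under composition and contains all identities, so it is a multiplicative system to begin with; this is immediate from the definition. The substantive axioms are: (RF1) every diagram $X\overset{u}{\to}Y\overset{s}{\longleftarrow}Y'$ with $s\in S_b^p$ can be completed to a commutative square $X'\to Y'$, $X'\overset{s'}{\to}X$ with $s'\in S_b^p$; and (RF2) for parallel morphisms $f,g:X\to Y$ with $sf=sg$ for some $s\in S_b^p$ out of $Y$, there is $t\in S_b^p$ into $X$ with $ft=gt$.

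For (RF1), the key point is that a morphism in $\sC P(F)$ is a \emph{pair} $(v,f)$, and by Lemma~\ref{l4} (or more precisely by the rigidity Proposition~\ref{l2}) the place component is determined once the morphism component and centres are pinned down, so it suffices to construct the square at the level of the underlying varieties and then check compatibility of places. At the variety level I would form the fibre product $X\times_Y Y'$ and take $X'$ to be the closure of the graph of the induced rational map, desingularised or dominated inside $\sC$ via $S_b^p$-finality; since $s$ is proper birational, the projection $X'\to X$ is again proper birational. The place on $X'$ is then forced: it is the unique place compatible with the new morphism whose existence and uniqueness are guaranteed by Proposition~\ref{l2} together with the compatibility-transport in Proposition~\ref{p1}. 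The hard part will be checking that this reconstructed place genuinely lifts the square to $\sC P(F)$, i.e.\ that all four compatibility conditions hold simultaneously; here I expect Lemma~\ref{l4}(b) to do the real work, since it says precisely that a place compatible along two legs of a triangle over a birational morphism forces the triangle to commute.

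For (RF2), the rigidity of places makes the argument short. If $(w,s)(v_f,f)=(w,s)(v_g,g)$ in $\sC P(F)$ with $s\in S_b^p$, then in particular $sf=sg$ as morphisms of varieties; since $s$ is birational, hence generically an isomorphism, and $X$ is integral, Lemma~\ref{l1} gives $f=g$ outright, and then $v_f=v_g$ by the uniqueness clause of Proposition~\ref{l2}. Thus $(v_f,f)=(v_g,g)$ already, and I may take $t=\mathrm{id}_X$. So (RF2) holds trivially — the genuine content of the calculus of right fractions is entirely concentrated in (RF1).

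Once the three axioms are established, the Gabriel--Zisman theorem gives that $(S_b^p)^{-1}\sC P(F)$ has the explicit description in which every morphism is represented by a roof $X\overset{p}{\longleftarrow}X'\overset{\phi}{\to}Y$ with $p\in S_b^p$, i.e.\ may be written $fp^{-1}$ (abusively $\phi p^{-1}$), which is exactly the second assertion of the proposition. The main obstacle, to reiterate, is the construction in (RF1): producing the completing object \emph{inside} $\sC$ (this is where $S_b^p$-finality is indispensable) and verifying that the uniquely determined place assembles the diagram into a genuine commutative square in $\sC P(F)$ rather than merely in $\Var$.
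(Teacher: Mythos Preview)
Your treatment of (RF1) is essentially the paper's argument: use the place $\lambda$ underlying $u$ to find a centre on $Y'$ (properness of $s$), get the rational map $\phi:X\ttto Y'$ compatible with $\lambda$ via Lemma~\ref{l2bis}, take the graph closure inside $X\times_Y Y'$, and then invoke $S_b^p$-finality to land in $\sC$. Your phrasing ``form the fibre product and take the closure of the graph of the induced rational map'' is a little vague about \emph{which} rational map, but the intention is clear and matches the paper.

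Your (RF2), however, has a real gap. You claim that from $sf=sg$ as morphisms of varieties, Lemma~\ref{l1} gives $f=g$ ``since $s$ is birational, hence generically an isomorphism''. This does not follow: $s$ being birational only says $s$ is an isomorphism over a dense open $V'\subseteq Y'$, but $f(\eta_X)$ need not lie in $s^{-1}(V')$. Concretely, if $s$ is the blow-up of a smooth point $p$ with exceptional divisor $E$, two distinct morphisms $f,g:X\to Y$ with image in $E$ satisfy $sf=sg$ (both constant at $p$) yet $f\neq g$. So you cannot deduce $f=g$ at the variety level first and then read off $v_f=v_g$.

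The fix, which is exactly what the paper does, reverses the order: by Corollary~\ref{c1.1}~c) the place $w$ underlying the birational morphism $s$ is the identity, so the equality of composed pairs gives $v_f=w\circ v_f=w\circ v_g=v_g$ immediately. Then Proposition~\ref{l2} (uniqueness of the morphism compatible with a given place) forces $f=g$. In other words, the rigidity you correctly identify as the engine of (RF2) lives on the \emph{place} side, not on the variety side.
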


\begin{proof} 
Consider a diagram 
\begin{equation}\label{eq3.4}
\begin{CD}
&& Y'\\
&&@V{s}VV\\
X@>u>>Y
\end{CD}
\end{equation}
 in $\Sm^\proper_*\P$, with $s\in
S_b^p$. Let
$\lambda:F(Y)\tto F(X)$ be the place compatible with $u$ which is implicit in
the statement. By Proposition \ref{l2}, $\lambda$ has centre $z=u(\eta_X)$
on $Y$. Since $s$ is proper, $\lambda$ therefore has also a centre $z'$ on
$Y'$. By Lemma \ref{l4} a), $s(z')=z$. By Lemma \ref{l2bis}, there
exists a unique  rational map $\phi:X\ttto Y'$ compatible with 
$\lambda$, and $s\circ \phi=u$ by Lemma \ref{l4} b). By the graph
trick, we 
get a commutative diagram
\begin{equation}\label{eq3.5}
\begin{CD}
X'@> u'>>  Y'\\
@V{s'}VV@V{s}VV\\
X@> u>>Y
\end{CD}
\end{equation}
in which $X'\subset X\times_Y Y'$ is the closure of the graph of
$\phi$, $s'\in S_b^p$ and $u'$ is compatible with $\lambda$. 
Since $X\in \Sm_*^\proper$, we may birationally dominate $X'$ by an $X''\in \Sm_*^\proper$ by Lemma \ref{l4.0}, hence replace $X'$ by $X''$ in the diagram.

Since $\Phi_1^*$ is full by Lemma \ref{l6}, the same construction works in $\Sm_*^\proper$, hence the structure of morphisms in $(S_b^p)^{-1}\Sm^\proper_*\P$ and $(S_b^p)^{-1}\Sm^\proper_*$.

Let now 
\[X\begin{smallmatrix}f\\ \rr\\ g
\end{smallmatrix} Y\overset{s}{\To} Y'
\]
be a diagram in $\Sm^\proper_*\P$ with $s\in S_b^p$, such that $sf=sg$. By
Corollary \ref{c1.1} c), the place underlying $s$ is the identity. Hence the
two places underlying $f$ and $g$ must be equal. But then $f=g$ by
Proposition \ref{l2}.
\end{proof}


\begin{prop}\label{p4a} a) Consider a diagram in $\Sm^\proper_*\P$ 
\begin{equation}\label{eq3.2}
\xymatrix{ 
&Z\ar[dl]_p\ar[dr]^f\\ 
X&& Y\\ 
&Z'\ar[ul]^{p'}\ar[ur]_{f'} 
}
\end{equation} 
where $p,p'\in S_b^p$. Let $K=F(Z)=F(Z')=F(X)$,  $L=F(Y)$
 and suppose given a place $\lambda:L\tto K$ compatible both with $f$
and $f'$. Then $(\lambda,fp^{-1}) =(\lambda,f'{p'}^{-1})$ in
$(S_b^p)^{-1}\Sm^\proper_*\P$. \\
b) Consider a diagram \eqref{eq3.2} in $\Sm^\proper_*$. Then $fp^{-1}=f'{p'}^{-1}$ in $(S_b^p)^{-1}\Sm^\proper_*$ if $(f,p)$ and $(f',p')$ define the same rational map from $X$ to $Y$.\\
\end{prop}

\begin{proof} 
a) By the graph trick, complete the diagram as
follows:
\begin{equation}\label{eq3.3}\xymatrix{
&Z\ar[dl]_p\ar[dr]^f\\ 
X&Z''\ar[u]^{p_1}\ar[d]_{p'_1}& Y\\ 
&Z'\ar[ul]^{p'}\ar[ur]_{f'} 
}\end{equation}
with $p_1,p'_1\in S_b^p$ and $Z''\in \Sm^\proper_*\P$. Since $X\in \Sm_*^\proper$, we may take $Z''$ in $\Sm_*^\proper$.Then we have
\[pp_1=p'p'_1,\quad fp_1=f'p'_1\]
(the latter by Lemma \ref{l4} b)), hence the claim. 

b) If $(f,p)$ and $(f',p')$ define the same rational map, then arguing as in a) we get a diagram \eqref{eq3.3} in $\Sm_*^\proper$, hence $fp^{-1}=f'{p'}^{-1}$  in $(S_b^p)^{-1}\Sm^\proper_*$. 
\end{proof}

\subsection{The morphism associated to a rational map}\label{mor-rat} Let $X,Y\in \Sm_*^\proper$, and let $\phi:Y\ttto X$ be a rational map. By the graph trick, we may find $p:Y'\to Y$ proper birational and a morphism $f:Y'\to X$ such that $\phi$ is represented by $(f,p)$; since $Y\in \Sm_*^\proper $, we may choose $Y'$ in $\Sm_*^\proper$. Then $fp^{-1}\in (S_b^p)^{-1}\Sm_*^\proper$ does not depend on the choice of $Y'$ by Proposition \ref{p4a} b): we simply write it $\phi$.

\subsection{Proof of Theorem \ref{cplsm}} \label{scplsm}

Let $K,L\in \place_*$ and $\lambda\in \place_*(K,L)$. Put  $X=\Psi_*(K), Y= \Psi_*(L)$, so that $X$ (resp. $Y$) is a smooth proper model of  $K$ (resp. $L$) in $\Sm_*$ (see \ref{d3.1}). Since $X$ is proper, $\lambda$ is finite on $X$ and by Lemma \ref{l2bis} there exists a unique  rational map $\phi:Y\ttto X$ compatible with $\lambda$, that we view as a morphism in $(S_b^p)^{-1}\Sm_*^\proper$ by \S \ref{mor-rat}.

\begin{lemma}\label{l3.3} With the above notation, we have $\Psi_*(\lambda)=\phi$.
\end{lemma}

\begin{proof}  Consider the morphisms $(\lambda,f)\in \Sm^\proper_*\P(Y',X)$ and $(1_L,s)\in \Sm^\proper_*\P(Y',Y)$. In \eqref{eq4.4a} $\Phi_1^*$ sends the first morphism to $\lambda$ and the second one to $1_L$, while $\Phi_2^*$ sends the first morphism to $f$ and the second one to $s$. The conclusion now follows from the commutativity of \eqref{eq4.4a} and the construction of $\Psi_*$.
\end{proof}

We can now prove Theorem \ref{cplsm}:

a) The essential surjectivity of $\Psi_*$ is tautological. Let now  $X=\Psi_*(K), Y= \Psi_*(L)$ for some $K,L\in \place_*$ and let $\phi\in (S_b^p)^{-1} \Sm^\proper_*(X,Y)$.  By Proposition \ref{l8a}, we may write $\phi=fs^{-1}$
where $f,s$ are morphisms in $\Sm^\proper_*$ and $s\in S_b^p$. Let $\tilde \phi:X\ttto Y$ be the corresponding rational map.
By Lemma \ref{l6}, $f$ is compatible with some place $\lambda$ and by Corollary \ref{c1.1} c),  $s$ is compatible with the corresponding isomorphism $\iota$ of function fields. Then $\tilde\phi$ is compatible with $\iota^{-1}\lambda$, and $\Psi_*(\iota^{-1}\lambda)=\phi$ by Lemma \ref{l3.3}. This proves the fullness of $\Psi_*$. (One could also use Lemma \ref{lfess}.)

 b) By Lemma \ref{l3.3}, $\Psi_*(\lambda)$ and $\Psi_*(\mu)$ are given by the respective rational maps $f,g:\Psi_*(L)\ttto \Psi_*(K)$ compatible with $\lambda,\mu$. By the definition of $\Sm_*^\proper$, we can find a model $X'\in \Sm_*^\proper$ of $K$ and two birational morphisms $s:X'\to X$, $t:X'\to \Psi_*(K)$. The hypothesis and Lemma \ref{l2bis} imply that $st^{-1}f=st^{-1}g$, hence $f=g$.

c) The said composition sends morphisms in $S_r$ to morphisms in $S_r$, hence induces a functor
\[
S_r^{-1}\place_*^\op\to S_r^{-1}\Sm.
\]

But $S_b^{-1}\Sm\iso S_r^{-1}\Sm$ by Theorem \ref{sb=sr}.



\subsection{Proof of Theorem \ref{t3.2}: the case of $\overline \Phi_1^*$}\label{s3.2}

Essential surjectivity is obvious by definition of $\place_*$. Let $X,Y\in \Sm_*^\proper\P$, and $K=\Phi_1^*(X), L=\Phi_1^*(Y)$.  By Lemma \ref{l2bis}, a place $\lambda:L\tto K$ is compatible with a (unique) rational map $\phi:X\ttto Y$. Since $X\in \Sm_*^\proper$, we may write $\phi=fs^{-1}$ with $f:X'\to Y$ for $X'\in \Sm_*^\proper$, and $s:X'\to X$ is a birational morphism. This shows the fullness of $\overline \Phi_1^*$. 

We now prove the faithfulness of $\overline \Phi_1^*$. Let $(\lambda_1,\psi_1),(\lambda_2,\psi_2)$ be two morphisms  from $X$ to $Y$ in $(S_b^p)^{-1}\Sm^\proper_*\P$ having the same image under $\overline \Phi_1^*$. By Proposition \ref{l8a}, we may write $\psi_i =f_ip_i^{-1}$ with $f_i,p_i$ morphisms and $p_i\in S_b$. As they have the same image, it  means that the places $\lambda_1$ and $\lambda_2$ from $F(Y)$ to $F(X)$ are equal. By Lemma \ref{l2bis}, $(f_1,p_1)$ and $(f_2,p_2)$ define the same rational map $\phi:X\ttto Y$. Therefore $\psi_1=\psi_2$ by Proposition \ref{p4a} b), and $(\lambda_1,\psi_1)=(\lambda_2,\psi_2)$.

\subsection{Dominant rational maps}\label{3.1} 
Recall from Subsection \ref{ratmaps} the category $\Rat_\dom$ of dominant rational maps between $F$-varieties. Writing $\Var_\dom$ for the category of $F$-varieties and dominant maps, we have inclusions of categories
\begin{equation}\label{hart2} \Var \supset \Var_\dom\overset{\rho}{\Inj} \Rat_\dom.
\end{equation}

Recall \cite[Ch. I, Th. 4.4]{hartshorne2} that there is an anti-equivalence of
categories
\begin{align}
\Rat_\dom&\iso \field^\op\label{hart}\\
X&\mapsto F(X).\notag
\end{align}

Actually this follows easily from Lemma \ref{l2.1}. We want to revisit
this theorem from the current point of view. For simplicity, we restrict to smooth varieties and separably generated extensions of $F$. Recall:

\begin{lemma}\label{lsg} A function field $K/F$ has a smooth model if and only if it is separably generated.
\end{lemma}

\begin{proof}  \emph{Necessity}: let $p$ be the exponential characteristic of $F$. If $X$ is a smooth model of $K/F$, then $X\otimes_F F^{1/p}$ is smooth over $F^{1/p}$ and irreducible, hence $K\otimes_F F^{1/p}$ is still a field. The conclusion then follows from Mac Lane's separability criterion \cite[Chapter 8, \S 4]{l}

\emph{Sufficiency}: if $K/F$ is separably generated, pick a separable transcendence basis $\{x_1,\dots,x_n\}$. Writing $F(x_1,\dots,x_n)=F(\A^n)$, we can find an affine model of finite type $X$ of $K/F$ with a dominant generically finite morphism $f:X\to \A^n$. By generic flatness \cite[11.1.1]{ega4}, there is an open subset $U\subseteq \A^n$ such that $f^{-1}(U)\to U$ is flat. On the other hand, since $K/F(x_1,\dots,x_n)$ is separable, there is another open subset $V\subseteq \A^n$ such that $\Omega^1_{f^{-1}(V)/V}=0$. Then $f^{-1}(U\cap V)$ is flat and unramified, hence \'etale, over $U\cap V$, hence is smooth over $F$ since $U\cap V$ is smooth \cite[17.3.3]{ega4}.
\end{proof}

 Instead of \eqref{eq4.4a} and \eqref{eq4.4b}, consider now the commutative diagrams of functors

\begin{equation}\label{eq4.4c}
\xymatrix{
&\Sm_\dom \P\ar[dr]^{\Phi_{2,\dom}}\ar[dl]_{\Phi_{1,\dom}}\\
\field_s^\op\ar[dr]^\Spec && \Sm_\dom\ar[dl]_{J_\dom}\\
&\widehat{\Sm_\dom}
}\; 
\xymatrix{
&S_b^{-1}\Sm_\dom \P\ar[dr]^{\overline \Phi_{2,\dom}}\ar[dl]_{\overline \Phi_{1,\dom}}\\
\field_s^\op\ar[dr]^{\overline{\Spec}} && S_b^{-1}\Sm_\dom\ar[dl]_{\bar J_\dom}.\\
&S_b^{-1}\widehat{\Sm_\dom}
}
\end{equation}

Here, $\field_s\subseteq \field$ is the full subcategory of separably generated extensions, $\Sm_\dom \P$ is the  subcategory of $\VarP$ given by varieties in $\Sm$ and morphisms are pairs $(\lambda,f)$ where $f$ is dominant (so that $\lambda$ is an inclusion of function fields) and $\Phi_{1,\dom}$, $\Phi_{2,\dom}$ are the two forgetful functors of \eqref{l3}, restricted to $\Sm_\dom\P$. Similarly, $J_\dom$ is the analogue of $J$ for $\Sm_\dom$. We extend the birational morphisms $S_b$ as in Definition \ref{d4.1}.

\begin{thm}\label{p3.2a} In the left diagram of \eqref{eq4.4c}, $\Phi_{2,\dom}$ is an isomorphism of categories. In the right diagram, all functors are equivalences of categories.
\end{thm} 

\begin{proof} The first claim follows from  Corollary \ref{c1.1} c). In the right diagram,  the proofs for $\bar J_\dom$ and $\overline{\Phi}_{1,\dom}$ are exactly parallel to those of Theorems \ref{t3.2} and  \ref{cplsm} with a much simpler proof for the latter.  As $\overline{\Phi}_{2,\dom}$ is an isomorphism of categories, the 4th functor $\overline{\Spec}$ is an equivalence of categories as well.
\end{proof}

In Theorem \ref{p3.2a}, we could replace $\Sm_\dom$ by $\Var_\dom$ or $\Var_\dom^\proj$ (projective varieties) and $\field_s$ by $\field$ (same proofs).\footnote{We could also replace dominant morphisms by flat morphisms, as in \cite{BG1}.} Since $\Phi_{2,\dom}$ is an isomorphism of categories in both cases, we directly get a naturally commutative diagram of categories and functors
\begin{equation}\begin{CD}
&&S_b^{-1}\Sm_\dom@>\sim>> \field_s^\op\label{eq4.3}\\
&&@VVV @VVV\\
 S_b^{-1}\Var_\dom^\proj@>\sim>> S_b^{-1}\Var_\dom@>\sim>> \field^\op.
\end{CD}\end{equation}
where the horizontal ones are equivalences.

To make the link with  \eqref{hart}, note that  the functor $\rho$ of \eqref{hart2} sends a 
birational morphism to an isomorphism. Hence $\rho$ induces functors
\begin{equation}\label{eq3.0}
S_b^{-1}\Var^\proj_\dom\to S_b^{-1}\Var_\dom\to \Rat_\dom
\end{equation}
whose composition with the second equivalence of \eqref{eq4.3} is \eqref{hart}.

\begin{prop}\label{p3.2} Let 
$S=S_o,S_b$ or $S_b^p$. \\
a) $S$ admits
a calculus of right fractions within $\Var_\dom$.\\
b) The functors in \eqref{eq3.0} are equivalences of categories.
\end{prop}

\begin{proof} a) For any pair
$(u,s)$ of morphisms as in Diagram \eqref{eq3.4},
with $s\in S$ and $u$ dominant, the pull-back of $s$ by $u$
exists
and is in $S$. Moreover, if $sf=sg$ with $f$ and $g$
dominant and $s\in S$, then $f=g$. 

b) This follows from \eqref{eq4.3} and \eqref{hart}.
\end{proof}

Taking a quasi-inverse of \eqref{eq4.3}, we now get an equivalence of categories
\begin{equation}\label{eq4.3a}
\Psi_\dom:\field_s^\op\iso S_b^{-1}\Sm_\dom 
\end{equation}
which will be used in Section \ref{HR}.

\begin{rk} \label{r4.6} The functor $(S_b^p)^{-1}\Var_\dom\to
\field^\op$ is not full (hence is not an equivalence of
categories). For example, let $X$ be a proper variety and $Y$ an
affine open subset of $X$, and let $K$ be their common function
field. Then the identity map $K\to K$ is not in the image of the
above functor. Indeed, if it were, then by calculus of fractions it
would be represented by a map of the form $fs^{-1}$ where $s:X'\to
X$ is proper birational. But then $X'$ would be proper and $f:X'\to
Y$ should be constant, a contradiction. 

It can be shown that the
localisation functor 
\[(S_b^p)^{-1}\Var_\dom\to
S_b^{-1}\Var_\dom\] 
has a (fully faithful) right adjoint given by
\[(S_b^p)^{-1}\Var^\proj_\dom\to
(S_b^p)^{-1}\Var_\dom\] 
via the equivalence
$(S_b^p)^{-1}\Var^\proj_\dom\iso
S_b^{-1}\Var_\dom$ given by Proposition \ref{p3.2} b). The
proof is similar to that of Theorem \ref{t3.1} (ii) below.
\end{rk}

\subsection{Recapitulation}\label{recap} We 
constructed a full and essentially surjective functor (Theorem \ref{cplsm})
\[
\Psi_*:S_r^{-1}\place_*^{\op} \to S_b^{-1}\Sm
\]
and an equivalence of categories \eqref{eq4.3a}
\[
\Psi_\dom=\bar J_\dom^{-1} \circ \overline{\Spec}:\field_s^\op\iso S_b^{-1}\Sm_\dom. 
\]

 Consider the natural functor 
\begin{equation}\label{eq.natural}
\theta:S_b^{-1}\Sm_*^\proj \to S_b^{-1}\Sm.
\end{equation}

In characteristic zero, $\theta$ is an equivalence of categories by \cite[Prop. 8.5]{loc}, noting that in this case $\Sm_*^\proj= \Sm^\proj$ by Hironaka. Let $\iota$ be the inclusion $\field_s^\op \hookrightarrow \place_*^\op.$ Then the natural transformation $\eta:\Spec \Rightarrow\Sigma$ of \eqref{eqeta} provides the following naturally commutative diagram
\begin{equation}\label{jolidiagramme}
\xymatrix{
\field_s^\op\ar[r]_\sim^{\Psi_\dom}\ar[d]^\iota&S_b^{-1}\Sm_\dom\ar[r] &S_b^{-1}\Sm\\
S_r^{-1}\place_*^{\op}\ar[r]^{\Psi_*}& S_b^{-1}\Sm_*^\proj.\ar[ur]_\sim^{\theta}
}
\end{equation}

(Note that $\eta$  induces a natural isomorphism $\bar \eta:\overline{\Spec} \overset{\sim}{\Rightarrow} \bar \Sigma$.)

In characteristic $p$, we don't know if $\field_s\subset \place_*$: to get an analogue of \eqref{jolidiagramme} we would have to take the intersection of these  categories.   We shall do this   in Section \ref{HR} in an enhanced way, using a new idea (Lemma \ref{l5.4} a)). As a byproduct, we shall get the full faithfulness of $\theta$ in any characteristic (Corollary \ref{c5.1})

\section{Other classes of varieties}\label{eqcat}

In this section we prove that, given a full subcategory $\sC$ of $\Var$
satisfying certain hypotheses, the functor
\[S_b^{-1}\sC\P\to \place^\op\]
induced by the functor $\Phi_1$ of Diagram \eqref{eq4.4a} is fully faithful.


\subsection{The $_*$ construction} We generalise Definition \ref{d3.1} as follows:

\begin{defn}\label{d5.1} Let $\sC$ be a full subcategory of $\Var$. We write $\sC_*$ for the full subcategory of $\sC$ with the following objects: $X\in \sC_*$ if and only if, for any $Y\in \Var^\proper$ birational to $X$, there exists $X'\in \sC$ and a proper birational morphism $s:X'\to Y$.
\end{defn}

\begin{lemma}\label{l5.1} a) $\sC_*$ is closed under birational equivalence.\\
b) We have $\sC_*=\sC$ for the following categories: $\Var, \Norm$ *and $\Sm,\Sm^\qp$ if $\car F=0$.\\
c) We have $\sC_*\cap \sC^\proper= (\sC^\proper)_*$, where $\sC^\proper:=\Var^\proper\cap \sC$.
\end{lemma}

\begin{proof} a) is tautological.  b) is trivial for $\Var$, is true for $\Norm$ because normalisation is finite and birational in $\Var$, and follows from Hironaka's resolution for $\Sm$.
Finally, c) is trivial.
\end{proof}

\begin{lemma}\label{l5.2} Suppose $\sC$ verifies the following condition: given a diagram
\[\begin{CD}
X'@>j>> \tilde X\\
@VpVV\\
X
\end{CD}\]
with $X,\tilde X\in \sC_*$, $p\in S_b^p$, $j\in S_o$ and $\tilde X$ proper, we have $X'\in \sC$. (This holds in the following special cases: $\sC\subseteq \Var^\proper$, or $\sC$  stable under open immersions.)\\ 
a) Let $X\in \sC_*$. Then the following holds: for any $s:Y\to X$ with $Y\in \Var$ and $s\in S_b^p$, there exists $t:X'\to Y$ with $X'\in \sC_*$ and $t\in S_b^p$.\\
b) Let $X,Y\in \sC_*$ with $Y$ proper, and let $\gamma:X\ttto Y$ be a rational map. Then there exists $X'\in \sC_*$, $s:X'\to X$ in $S_b^p$ and a morphism $f:X'\to Y$ such that $\gamma=fs^{-1}$.
\end{lemma}

\begin{proof} a) By Nagata's theorem, choose a compactification $\bar Y$ of $Y$. By hypothesis, there exists $\bar X'\in \sC$ and a proper birational morphism $t':\bar X'\to \bar Y$. If $X'={t'}^{-1}(Y)$, then $t:X'\to Y$ is a proper birational morphism. The hypothesis on $\sC$ then implies that $X'\in \sC$, hence $X'\in \sC_*$ by Lemma \ref{l5.1} a).

b) Apply a) to the graph of $\gamma$, which is proper over $X$.
\end{proof}

\subsection{Calculus of fractions}


\begin{prop}\label{p5.1} Under the condition of Lemma \ref{l5.2}, Propositions \ref{l8a} and \ref{p4a} remain valid for $\sC_*\P$. In particular, any morphism in $(S_b^p)^{-1}\sC_*\P$ or $(S_b^p)^{-1}\sC_*$ is of the form $fp^{-1}$, with $f\in \sC_*\P$ or $\sC_*$ and $p\in S_b^p$.
\end{prop}

\begin{proof} Indeed, the only fact that is used in the proofs of Propositions \ref{l8a} and \ref{p4a} is the conclusion of Lemma \ref{l5.2} a).
\end{proof}

To go further, we need:

\begin{prop}\label{l3.2}
In $(S_b^p)^{-1}\sC_*\P$, $S_o$ admits a calculus of left fractions.  In particular (\cf Proposition \ref{p5.1}),
any morphism in $S_b^{-1}\sC_*\P$ may be written as
$j^{-1}fq^{-1}$, with $j\in S_o$ and $q\in S_b^p$.
\end{prop}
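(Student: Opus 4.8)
The plan is to verify the two axioms for a calculus of left fractions for $S_o$ inside $(S_b^p)^{-1}\sC P(F)$, dualising the pattern of Proposition \ref{l8a}. Recall that a calculus of left fractions requires: (LF1) given $u:X\to Y$ and an open immersion $j:X\to X'$ in $S_o$, one can complete the square with a second open immersion $j':Y\to Y'$ and a morphism $u':X'\to Y'$ with $u'j=j'u$; and (LF2) if $jf=jg$ for some $j\in S_o$, then $f=g$ already in $(S_b^p)^{-1}\sC P(F)$ (after precomposing with some element of $S_o$). By Proposition \ref{l8a}, every morphism of $(S_b^p)^{-1}\sC P(F)$ has the shape $fp^{-1}$ with $p\in S_b^p$, so I may reduce both axioms to statements about honest morphisms $(v,f)$ of $\sC P(F)$ composed with inverses of proper birational maps.

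\medskip\noindent\textbf{Completing the square (LF1).} First I would treat an actual morphism $(v,f):X\to Y$ of $\sC P(F)$ together with an open immersion $j:X\hookrightarrow X'$ in $S_o$, where $X'\in\sC$. The place $v:F(Y)\tto F(X)$ coincides with the place attached to $X'$ since $F(X)=F(X')$. The idea is to use properness on the \emph{target} side: because $\sC$ is properly final, $Y$ embeds as an open subset of a proper $\bar Y\in\sC$, and the composite $X\to Y\hookrightarrow \bar Y$ extends over the larger $X'$ by the valuative machinery — concretely, apply the graph trick to $X'\dashrightarrow \bar Y$ (the rational map induced by $f$ on function fields, which is compatible with $v$) to obtain $\tilde X'\to \bar Y$ through a proper birational $\tilde X'\to X'$. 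After pulling $Y\hookrightarrow\bar Y$ back along this extended morphism one recovers an open immersion $j':Y\hookrightarrow Y'$ with $Y'\in\sC$ by $S_o$-finality, together with a compatible $u':X'\to Y'$; the resulting square commutes on function fields and hence commutes in $\sC P(F)$ by Lemma \ref{l4} b) and Proposition \ref{l2}. The point is that all the choices are pinned down by the place $v$, so compatibility is automatic.

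\medskip\noindent\textbf{Cancellation (LF2).} For the second axiom, suppose $j\circ f = j\circ g$ with $j\in S_o$ an open immersion and $f,g:Z\to X$ morphisms in $(S_b^p)^{-1}\sC P(F)$. Since $j$ is a monomorphism in $\Var$ and underlies the identity place (it is birational, so its place is trivial by Corollary \ref{c1.1} c)), the two places underlying $f$ and $g$ must agree, being both equal to the place underlying $jf=jg$. By Proposition \ref{l8a} write $f=(v,\varphi)p^{-1}$ and $g=(v,\psi)p^{-1}$ with a common $p\in S_b^p$ after refining (using $S_b^p$-finality to find a common proper birational cover). Then $\varphi$ and $\psi$ are two morphisms of $\sC P(F)$ sharing the same place $v$ and the same generic point image, so $\varphi=\psi$ by the uniqueness clause of Proposition \ref{l2}. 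Hence $f=g$ already, with no further fraction needed.

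\medskip\noindent\textbf{The normal form and the main obstacle.} Once LF1 and LF2 hold, the general theory of \cite[I.2]{gz} yields that every morphism in $(S_o)^{-1}\!\big((S_b^p)^{-1}\sC P(F)\big)=S_b^{-1}\sC P(F)$ can be written as $j^{-1}h$ with $j\in S_o$ and $h\in(S_b^p)^{-1}\sC P(F)$; inserting the normal form $h=fq^{-1}$ from Proposition \ref{l8a} gives exactly $j^{-1}fq^{-1}$ as claimed. I expect the genuinely delicate step to be LF1, specifically checking that the completed square \emph{can be arranged inside $\sC$} while keeping the new morphism compatible with the prescribed place: the graph-trick construction naturally lands in a closure that need not lie in $\sC$, so one must invoke proper finality on $\bar Y$ and $S_o$-finality on the pullback simultaneously, and then verify that the genuinely new arrow $u'$ carries the right place — this is where Lemma \ref{l4} and the rigidity of places (Proposition \ref{l2}) do the real work. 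The cancellation axiom, by contrast, should be essentially formal given Corollary \ref{c1.1} c) and Proposition \ref{l2}.
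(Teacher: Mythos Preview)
Your (LF2) is stated with the open immersion on the wrong side. For a calculus of \emph{left} fractions, the cancellation axiom reads: given $f,g:X\to Y$ and $j:X'\to X$ in $S_o$ with $fj=gj$, there exists $j'\in S_o$ with $j'f=j'g$. You instead check that $jf=jg$ (with $j$ \emph{post}composed) forces $f=g$, which is just the statement that open immersions remain monomorphisms in $(S_b^p)^{-1}\sC P(F)$ --- true, but irrelevant. The paper treats the correct hypothesis $fj=gj$: writing $f=\tilde f p^{-1}$ and $g=\tilde g p^{-1}$ with a common $p:\tilde X\to X$, one picks an open $U$ common to $X'$ and $\tilde X$; then $fj=gj$ forces $\tilde f_{|U}$ and $\tilde g_{|U}$ to coincide in the localised category, so their underlying places agree, and Proposition~\ref{l2} gives $\tilde f=\tilde g$, hence $f=g$. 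The mechanism (places rigidify everything) is exactly the one you describe, but you have applied it to the wrong diagram.

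Your (LF1) is closer to the mark, but two points are unclear. First, the phrase ``pulling $Y\hookrightarrow\bar Y$ back along this extended morphism'' does not produce an open immersion $j':Y\hookrightarrow Y'$; pulling back along $\tilde X'\to\bar Y$ yields an open subset of $\tilde X'$, not of something containing $Y$. The paper simply takes $Y'=\bar Y$ and $j'=j_1:Y\hookrightarrow\bar Y$. Second, you begin with an honest morphism $(v,f)$ and promise that the general case $\phi=fp^{-1}$ reduces to this, but the reduction is not spelled out and is not quite formal: $\tilde X\to X\to X'$ is not an open immersion. The paper avoids any reduction and works directly with the rational map $X'\dashrightarrow\bar Y$ determined by $\phi$ on function fields, then checks commutativity by producing a common proper birational cover $\tilde X''$ of $\tilde X$ and $\tilde X'$ and invoking Lemma~\ref{l1}, which is a bit more than the bare appeal to Lemma~\ref{l4}~b) you propose.
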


\begin{proof} a) Consider a diagram in $(S_b^p)^{-1}\sC_*\P$
\[\xymatrix{
X\ar[r]^{j}\ar[d]_\phi& X'\\
Y
}\]
with $j\in S_o$. By
Proposition \ref{p5.1},
we may write $\phi=fp^{-1}$ with $p\in S_b^p$ and $f$ a morphism of $\sC_*\P$  ($f,p$ originate from some common $\bar X$). We may embed $Y$ as an open subset of a proper $\bar Y$. This gives us a rational map $X'\ttto \bar Y$. Using the graph trick, we may ``resolve" this rational map into a morphism $g:\tilde X'\to \bar Y$, with $\tilde X'\in \Var$ provided with a proper birational morphism $q:\tilde X'\to X'$.  Since $Y\in \sC_*$, we may assume $\bar X'\in \sC_*$. Let $\psi = gq^{-1}\in (S_b^p)^{-1}\sC_*\P$. Then the diagram in $(S_b^p)^{-1}\sC_*\P$
\[\xymatrix{
X\ar[r]^{j}\ar[d]_\phi& X'\ar[d]^\psi\\
Y\ar[r]^{j_1}& \bar Y
}\]
commutes because the following bigger diagram commutes in $\sC_*\P$:
\[\xymatrix{
\tilde X\ar[dr]^p\ar[ddr]_f &&\tilde X''\ar[ll]_{r}\ar[rr]^{r'}&& \tilde X'\ar[dl]_q\ar[ddl]^g\\
&X\ar[rr]^{j}&& X'\\
&Y\ar[rr]^{j_1}&& \bar Y
}\]
thanks to Lemma \ref{l4}, for suitable $\tilde X''\in \sC_*$ and $r,r'\in S_b^p$.

b) Consider a diagram
\[X'\overset{j}{\to}X\begin{smallmatrix}f\\ \rr\\ g
\end{smallmatrix} Y
\]
in $(S_b^p)^{-1}\sC_*\P$, where $j\in S_o$ and $fj = gj$. By Proposition \ref{p5.1}, we may write $f=\tilde f p^{-1}$ and $g=\tilde g p^{-1}$, where $\tilde f,\tilde g$ are morphisms in $\sC_*\P$ and $p:\tilde X\to X$ is in $S_b^p$. Let $U$ be a common open subset to $X'$ and $\tilde X$: then the equality $fj=gj$ implies that the restrictions of $\tilde f$ and $\tilde g$ to $U$ coincide as morphisms of $(S_b^p)^{-1}\sC_*\P$.  Hence the places underlying $\tilde f$ and $\tilde g$ are equal, which implies that $\tilde f=\tilde g$ (Proposition \ref{l2}), and thus $f=g$.
\end{proof}

\begin{rk} \label{r3.1}  $S_o$ does not
  admit a
  calculus of \emph{right} fractions, even in $(S_b^p)^{-1}\VarP$. Indeed, consider a diagram in
  $(S_b^p)^{-1}\VarP$
\[\xymatrix{
&Y'\ar[d]^j\\
X\ar[r]^f& Y
}\]
where $j\in S_o$ and, for simplicity, $f$ comes from
$\VarP$. Suppose that we can complete this diagram into a
commutative diagram in $(S_b^p)^{-1}\VarP$
\[\xymatrix{
\tilde X'\ar[d]_p\ar[dr]^g\\
X'\ar[r]^\phi\ar[d]_{j'}&Y'\ar[d]^j\\
X\ar[r]^f& Y
}\]
with $p\in S_b^p$ and $g$ comes from $\VarP$. By Proposition
\ref{l2} the localisation functor $\VarP\to (S_b^p)^{-1}\VarP$
is faithful, so the diagram (without $\phi$) must already commute in
$\VarP$. If $f(X)\cap Y'=\emptyset$, this is impossible.
\end{rk}

\subsection{Generalising Theorem \ref{t3.2}}


\begin{thm} \label{t3.1} Let $\sC$ be a full subcategory of $\Var$.  In diagram \eqref{eq4.4a},\\
a) $J$ induces an equivalence of categories $S_b^{-1}\sC\to S_b^{-1}\hat{\sC}$.\\
b) Suppose that $\sC$ verifies the condition of Lemma \ref{l5.2}. Consider the string of functors
\[(S_b^p)^{-1}\sC_*^\proper\P\by{S}
  (S_b^p)^{-1}\sC_*\P \by{T}
 S_b^{-1}\sC_*\P\by{\overline{\Phi_1^*}} \place^\op.\]
where $S$ and $T$ are the obvious ones and $\overline{\Phi_1^*}$ is induced by $\Phi_1$.
 Then
\begin{thlist}
\item $S$ is fully faithful and $T$ is faithful.
\item For any $X\in
  (S_b^p)^{-1}\sC_*P$ and $Y\in (S_b^p)^{-1}\sC_*^\proper\P$, the map 
\begin{equation}\label{eq3.1}T:\Hom(X,S(Y))\to \Hom(T(X), TS(Y))
\end{equation} 
is an isomorphism.
\item  $TS$ is an equivalence of categories.
\item  $\overline{\Phi_1^*}$ is fully faithful.
\end{thlist}
\end{thm}

\begin{proof} a) It is exactly the same proof as for the case of $\bar J$ in Theorem \ref{t3.2}.

b) In 4 steps:

A) We run through the proof of Theorem \ref{t3.2}  given in \S \ref{s3.2} for $\overline{\Phi_1^*}$ in the case $\sC=\Sm^\proj$. In view of Proposition \ref{p5.1},  the proof of faithfulness for $\overline{\Phi_1^*}T$  goes through verbatim. The proof of fullness for $\overline{\Phi_1^*}TS$ also goes through (note that in \loccit, we need $Y$ to be proper in order for $\lambda$ to be finite on it).
It follows that $S$ is fully faithful and $T$ is faithful. 

B) By A), \eqref{eq3.1} is injective. Let $\phi\in\Hom(T(X), TS(Y))$. By Proposition \ref{l3.2},
$\phi=j^{-1}fp^{-1}$ with $j\in S_o$
and $p\in S_b^p$. Since $Y$ is proper, $j$ is
necessarily an isomorphism, which shows the surjectivity of \eqref{eq3.1}. This proves (ii).

C) It follows from A) and B) that $TS$ is fully faithful. Essential
surjectivity follows from Lemma \ref{l5.1} a) and c) plus Nagata's theorem. This proves (iii).

D) We come to the proof of (iv). Since $\overline{\Phi_1^*}TS$ is faithful (see A)) and $TS$ is an
equivalence, $\overline{\Phi_1^*}$ is faithful. To show that it is 
full, let $X,Y\in \sC_*\P$ and
$\lambda:F(Y)\tto F(X)$ a place. Let $Y\to\bar Y$ be a
compactification of $Y$. By Definition \ref{d5.1}, we may choose $\bar Y'\by{s}\bar
Y$ with  $s\in S_b^p$ and $\bar Y'\in\sC_*^\proper$. 
Then $\lambda$ is finite over  $\bar Y'$. By Lemma \ref{l2bis}, there is a
rational map $f:X\ttto \bar Y'$ compatible with $\lambda$.  
Applying Lemma \ref{l5.2} b) to the rational maps $X\ttto \bar Y'$ and $Y\ttto \bar Y'$,  we find a diagram in $\sC_*$
\[\xymatrix{
X'\ar[d]^t \ar[r]^f & \bar Y' & Y'\ar[l]_{t'} \ar[d]^{s'}\\
X &  &Y
}\]
with $t,s'\in S_b^p$ (and $t'\in S_b$). Then $\phi=s'{t'}^{-1}ft^{-1}:X \to Y$ is such that
$\overline{\Phi_1^*}(\phi)=\lambda$.
\end{proof}



\begin{cor}\label{c5.3} The localisation functor $T$ has a right adjoint, given explicitly by $(TS)^{-1}\circ S$.\qed
\end{cor}

Consider now the commutative diagram of functors:
\begin{equation}\label{eq5.2}
\begin{CD}
(S_b^p)^{-1}\sC_*^\proper\P@>S>>
  (S_b^p)^{-1}\sC_*\P @>T>>
 S_b^{-1}\sC_*\P@>\overline{\Phi_1^*}>> \place^\op\\
 @VVV @VVV @VVV ||\\
(S_b^p)^{-1}\Var^\proper\P@>S>>
  (S_b^p)^{-1}\VarP @>T>>
 S_b^{-1}\VarP@>\overline{\Phi_1^*}>> \place^\op.
 \end{CD}
 \end{equation}

\begin{cor}\label{c5.2}  All vertical functors in \eqref{eq5.2}
are fully faithful.
\end{cor}


\begin{proof}  For the first and third vertical functors, this is a byproduct of Theorem
\ref{t3.1}. The middle one is faithful by the faithfulness of $T$
and $\overline{\Phi_1^*}$ in Theorem  \ref{t3.1}. For fullness, let $X, Y\in (S_b^p)^{-1}\sC_*\P$ and $\phi:X\to Y$ be a morphism in $(S_b^p)^{-1}\VarP$. By  Proposition \ref{p5.1}, we may write $\phi = fp^{-1}$, with $p:\tilde X\to X$
proper birational. By Lemma \ref{l5.2} a), we may find $p':\tilde X'\to \tilde X$
proper birational with $\tilde X'\in \sC_*$, and replace $fp^{-1}$ by $fp'(pp')^{-1}$. 
\end{proof}

\begin{rks} 1) Take $\sC=\Var$ in Theorem \ref{t3.1} and let $X,Y\in (S_b^p)^{-1}\VarP$. Then the image of $\Hom(X,Y)$ in  $\Hom(\overline{\Phi_1^*}T(Y),\overline{\Phi_1^*}T(X))$ via $\overline{\Phi_1^*}T$ is contained in the set of places which  are finite on $Y$.  If $X$ and $Y$ are
proper, then the image is all of $\Hom(\overline{\Phi_1^*}T(Y), \overline{\Phi_1^*}T(X))$. On the other
hand, if $X$ is proper and $Y$ is affine, then for any map
$\phi=fp^{-1}:X\to Y$, the source $X'$ of $p$ is proper hence $f(X')$
is a closed point of $Y$, so that the image is contained in
the set of places from $F(Y)$ to $F(X)$ whose centre on $Y$ is a
closed point (and one sees easily that this inclusion is an
equality). In general, the description of this image seems to depend heavily on the geometric 
nature of $X$ and $Y$.\\
2) For ``usual'' subcategories $\sC\subseteq \Var$, the functors $\Phi_1^*$, $\Phi_1^*T$ and $\Phi_1^*TS$  of Theorem \ref{t3.1} b) are essentially surjective (hence so are those in Corollary \ref{c5.2}): this is true for $\sC=\Var$ or $\Norm$ (any function field has a normal proper model), and for $\sC=\Sm$ in characteristic $0$. For $\sC=\Sm$ in positive characteristic, the essential image of these functors is the category $\place_*^\op$ of  Definition \ref{d3.1}.
\end{rks}

\subsection{Localising $\sC_*$} In Theorem \ref{t3.1}, we generalised Theorem \ref{t3.2} which was used to construct the functor $\Psi_*$ of \eqref{psism}. A striking upshot is Corollary \ref{c5.2}. What happens if we study $S_b^{-1}\sC_*$ instead of $S_b^{-1}\sC_*\P$?

This was done previously in \cite[\S 8]{loc}, by completely different methods. The two main points were:

\begin{itemize}
\item In characteristic $0$, we have the
following equivalences of categories:
\begin{equation}\label{eq5.3}
S_b^{-1}\Sm^\proj\simeq S_b^{-1}\Sm^\proper\simeq S_b^{-1}\Sm^\qp
\simeq S_b^{-1}\Sm
\end{equation}
induced by the obvious inclusion functors \cite[Prop. 8.5]{loc}.
\item Working with varieties that are not smooth or at least regular leads to pathologies: for example, the functor $S_b^{-1} \Sm\allowbreak \to S_b^{-1}\Var$ is neither full nor faithful \cite[Rk. 8.11]{loc}. This contrasts starkly with Corollary \ref{c5.2}. The issue is closely related to the regularity condition appearing in Lemma \ref{l6}; it is dodged in \cite[Prop. 8.6]{loc} by restricting to those morphisms that send smooth locus into smooth locus.
\end{itemize}

Using the methods of \cite{loc}, one can show that the functor
\begin{equation}\label{eq5.4}
(S_b^p)^{-1}\sC_*^\proper=S_b^{-1}\sC_*^\proper\to S_b^{-1} \sC_* 
\end{equation}
is an equivalence of categories for any $\sC\subseteq \Var$ satisfying the condition of Lemma \ref{l5.2}. For this, one should use \cite[Th. 5.14]{loc} under a form similar to that given in \cite[Prop. 5.10]{loc}. One can then deduce from Corollary \ref{c5.3} that the localisation functor
\[(S_b^p)^{-1} \sC_*\by{T} S_b^{-1} \sC_* \] 
has a right adjoint given (up to the equivalence \eqref{eq5.4}) by $(S_b^p)^{-1} \sC_*^\proper\by{S} (S_b^p)^{-1} \sC_*$ (in particular, $S$ is fully faithful): indeed, the unit and counit of the adjunction in Corollary \ref{c5.3} map by the essentially surjective forgetful functors 
\begin{equation}\label{eq5.5}
S_b^{-1}\sC_*\P\to S_b^{-1}\sC_*, \quad \text{etc.}
\end{equation}
 to natural transformations which keep enjoying the identities of an adjunction. Note however that \eqref{eq5.5} is not full unless $\sC\subseteq \Sm$ (see Lemma \ref{lfess} and Lemma \ref{l2} for this case).

For $\sC=\Sm$ or $\Sm^\qp$, the equivalence \eqref{eq5.4} extends a version of \eqref{eq5.3} to positive characteristic. We won't give a detailed proof however, because it would be tedious and we shall obtain a better result later (Corollary \ref{c5.1}) by a different method.

The proofs given in \cite{loc} do not use any calculus of fractions. In fact,  $S_b^p$
\emph{does not admit any calculus of fractions within $\Var$}, contrary to the case
of $\VarP$ (\cf Proposition \ref{l8a}). This is shown by the same 
examples as in Remark \ref{rk2.1}. If we restrict to $\Sm_*$,
we can use Proposition \ref{l8a} and Lemma \ref{l6} to prove a helpful part of calculus of fractions:

\begin{prop}\label{p3.4} a)   Let $s:Y\to X$ be
  in $S_b^p$, with $X$ smooth. Then $s$ is an envelope \cite{fulgil}: for any
  extension  $K/F$, the map $Y(K)\to X(K)$ is surjective.\\
b) The  multiplicative set $S_b^p$ verifies the second axiom of
  calculus of right fractions within $\Sm_*$.\\
c) Any morphism in $S_b^{-1}\Sm_*$ may be represented as $j^{-1}fp^{-1}$,
where $j\in S_o$ and $p\in S_b^p$.
\end{prop}

\begin{proof} a) Base-changing to $K$, it suffices to deal with $K=F$. 
Let $x\in X(F)$. By lemma \ref{l6}, there is a place
$\lambda$ of $F(X)$ with centre $x$ and residue field $F$. The
valuative criterion for properness implies that $\lambda$ has a centre
$y$ on $Y$; then $s(y)=x$ by Lemma \ref{l4} and $F(y)\subseteq
F(\lambda)=F$. 

b) We consider a diagram \eqref{eq3.4} in $\Sm_*$, with $s\in
S_b^p$. By a), $z=u(\eta_X)$ has a preimage $z'\in Y'$
with same residue field. Let $Z=\overline{\{z\}}$ and
$Z'=\overline{\{z'\}}$: the map $Z'\to Z$ is birational. Since the map
$\bar u:X\to Z$ factoring $u$ is dominant, we get by Theorem \ref{p3.2}
b) a commutative diagram like
\eqref{eq3.5}, with $s'$ proper birational. By Lemma \ref{l5.2} a), we may then replace $X'$ by an object of $\Sm_*$.

c) As that of Proposition \ref{l3.2}.
\end{proof}

\begin{rk} On the other hand, $S_b^p$ is far from
verifying the third axiom of calculus of right fractions within $\Sm_*$. Indeed, let
  $s:Y\to X$ be a proper birational morphism that contracts some
  closed subvariety $i:Z\subset Y$ to a point. Then, given any two morphisms
  $f,g:Y'\rr Z$, we have $sif=sig$. But if $ift=igt$ for some $t\in
  S_b^p$, then $if=ig$ (hence $f=g$) since $t$ is dominant.
 \end{rk}

\section{Homotopy of places and $R$-equivalence}\label{HR}

In this section, we do several things. In Subsection \ref{s5.1} we prove elementary results on divisorial valuations with separably generated residue fields. In Subsection \ref{s.dv} we introduce a subcategory $\dv$ of $\place$, where morphisms are generated by field inclusions and places given by discrete valuation rings. We relate it in Subsection \ref{s.am} with a construction of Asok-Morel \cite{a-m} to define a functor
\[\Psi:S_r^{-1}\dv\to S_b^{-1}\Sm\]
extending the functor $\Psi_\dom$ of \eqref{eq4.3a}. This functor is compatible with the functor $\Psi_*$ of Theorem \ref{cplsm}.  We then show in Proposition \ref{leq1} that the localisation $\place\to S_r^{-1}\place$ is also a quotient by a certain equivalence relation $\h$; although  remarkable, this fact is elementary.

Next, we reformulate a result of Asok-Morel to enlarge the equivalence relation $\h$ to another, $\h'$, so that the functor $\Psi$ factors through an equivalence of categories
\[\dv/\h'\iso S_b^{-1}\Sm.\]

Finally, we use another result of Asok-Morel to compute some Hom sets in $S_b^{-1}\Sm$ as $R$-equivalence classes: in the first version of this paper, we had proven this only in characteristic $0$ by much more complicated arguments.

\subsection{Good dvr's}\label{s5.1}

\begin{defn}\label{d-good} A discrete valuation ring (dvr) $R$ containing $F$ is \emph{good} if its quotient field $K$ and its residue field $E$ are finitely and separably generated over $F$, with $\trdeg(E/F)=\trdeg(K/F)-1$.
\end{defn}

\begin{lemma}\label{l4.3} A dvr $R$ containing $F$ is good if and only if there exist a smooth $F$-variety $X$ and a smooth divisor $D\subset X$ such that $R\simeq \sO_{X,D}$.
\end{lemma}

\begin{proof} Sufficiency is clear by Lemma \ref{lsg}.  Let us show necessity. The condition on the transcendence degrees means that $R$ is \emph{divisorial} = a ``prime divisor'' in the terminology of \cite{zs}. By \loccit, Ch. VI, Th. 31, there exists then a model $X$ of $K/F$ such that $R=\sO_{X,x}$ for some point $x$ of codimension $1$. (In particular, granting the finite generation of $K$, that of $E$ is automatic.) Furthermore, the separable generation of $E$ yields a short exact sequence
\[0\to \fm/\fm^2\to \Omega^1_{R/F}\otimes_R E\to \Omega^1_{E/F}\to 0\]
where $\fm$ is the maximal ideal of $R$ (see Exercise 8.1 (a) of \cite[Ch. II]{hartshorne2}). Therefore $\dim_E \Omega^1_{R/F}\otimes_R E=\trdeg(K/F) =\dim_K \Omega^1_{R/F}\otimes_R K$, thus $\Omega^1_{R/F}$ is free of rank $\trdeg(K/F)$ and $x$ is a smooth point of $X$. Shrinking $X$  around $x$, we may assume that it is smooth; if $D=\overline{\{x\}}$,  it is generically smooth by Lemma \ref{lsg}, hence we may assume $D$ is smooth up to shrinking $X$ further.
\end{proof}

\begin{lemma}\label{lsg2} Let $R$ be a good dvr containing $F$, with quotient field $K$ and residue field $E$, and let $K_0/F$ be a subextension of $K/F$. Then $R\cap K_0$ is either $K_0$ or a good dvr.
\end{lemma}

\begin{proof} By Mac Lane's criterion, $K_0$ is separably generated, and the same applies to the residue field $E_0\subseteq E$ of $R\cap K_0$ if the latter is a dvr.
\end{proof}

\subsection{The category $\dv$}\label{s.dv}

\begin{defn} Let  $K/F$ and $L/F$ be two separably generated extensions. We denote by $\dv(K,L)$ the set of morphisms in $\place(K,L)$ of the form
\begin{equation}\label{eq-dv}
K\tto K_1\tto\dots \tto K_n\Inj L  
\end{equation}
where for each $i$, the place $K_i\tto K_{i+1}$ corresponds to a good dvr with quotient field $K_i$ and residue field $K_{i+1}$. (Compare \cite[Ch. VI, \S 3]{zs}.)
\end{defn}

\begin{lemma}\label{l4.1} In $\dv(K,L)$, the decomposition of a morphism in the form \eqref{eq-dv} is unique. The collection of the $\dv(K,L)$'s defines a subcategory $\dv\subset \place$, with objects the separably generated function fields.
\end{lemma}

\begin{proof} Uniqueness follows from \cite[p. 10]{zs}. To show that $Ar(\dv)$ is closed under composition, we immediately reduce to the case of a composition
\begin{equation}\label{eq.elem}
K\overset{i}{\Inj} L\overset{\lambda}{\tto} L_1
\end{equation}
where $(L,L_1)$ correspond to a good dvr $R$. Then the claim follows from applying Lemma \ref{lsg2} to the commutative diagram in $\place$
\begin{equation}\label{eq4.1}
\begin{CD}
L@>\lambda>> L_1\\
@AiAA @Ai_1AA\\
K@>\lambda_1>> K_1
\end{CD}
\end{equation}
where $K_1$ is the residue field of $R\cap K$ if this is a dvr, and $K_1=K$ otherwise (and then $\lambda_1$ is a trivial place).
\end{proof}

We shall need the following variant of a theorem of Knaf and Kuhl\-mann \cite[Th. 1.1]{knaf-kuhlmann} (compare \loccit, pp. 834/835):

\begin{thm}\label{t0} Let $\lambda:K\tto L$ be a morphism in $\dv$. Then $\lambda$ is finite over a smooth model of $K$. Moreover, let $K'\subseteq K$ be a subextension of $K$, and let $Z$ be a model of $K'$ on which $\lambda_{|K'}$ has a centre $z$. Then there is a smooth model $X$ of $K$ on which $\lambda$ has a centre of codimension $n$, the rank of $\lambda$, and a morphism $X\to Z$ inducing the extension $K/K'$.
\end{thm}

\begin{proof} This actually follows from \cite[Th. 1.1]{knaf-kuhlmann}\footnote{We thank Hagen Knaf for his help in this proof.}: let $U$ be an
open affine neighbourhood of $z$ and let $E:=\{y_1,\dots,y_r\}$ be a set of generators of the
$F$-algebra $\sO_{Z}(U)$ (ring of sections). Then by \cite[Th. 1.1]{knaf-kuhlmann}, there exists a model $X_0$ of $K/F$ such that:
\begin{itemize}
\item $\lambda$ is centred at a smooth point $x$ of $X_0$,
\item $\dim\sO_{X_0,x}=n=\dim \sO_\lambda$,
\item $E$ is contained in the maximal ideal of $\sO_{X_0,x}$.
\end{itemize}

Hence  $\sO_{Z}(U)\subseteq \sO_{X_0}(X)$ for some open affine neighbourhood $X$ of $x$,
which yields a morphism $X\to U$ that maps $x$ to $z$.
\end{proof}

\subsection{Relationship with the work of Asok and Morel}\label{s.am}

In \cite[\S 6]{a-m}, Asok and Morel prove closely related results: let us translate them in the present setting. 

Let us write $\sC^\vee$ for the category of presheaves of sets on a category $\sC$. In \cite{a-m}, the authors denote the category $(S_r^{-1}\Sm)^\vee$ by $Shv_F^{h\A^1}$. Similarly, they write $\sF_F^r-\Set$ for  the  category consisting of objects of $(\field_s^\op)^\vee$ provided with ``specialisation maps'' for good dvrs. In \cite[Th. 6.1.7]{a-m},  they construct a full embedding 
\begin{equation}\label{eq.am}
Shv_F^{h\A^1}\to \sF_F^r-\Set
\end{equation}
 (evaluate presheaves on function fields),  and show that its essential image consists of those functors $\sS\in \sF_F^r-\Set$ satisfying a list of axioms (A1) -- (A4) (ibid., Defn. 6.1.6). 

The proof of Lemma \ref{l4.1} above shows that Conditions (A1) and (A2) mean that $\sS$ defines a functor $\dv^\op\to \Set$, and Condition (A4) means that $\sS$ factors through $S_r^{-1}\dv^\op$. 
 In other words, they essentially\footnote{Essentially because Condition (A1) of \cite[\S 6]{a-m} only requires a commutation of diagrams coming from \eqref{eq4.1} when the ramification index is $1$.} construct a functor
\[(S_r^{-1}\Sm)^{\vee}\to (S_r^{-1}\dv^\op)^\vee.\]
 
 We now check that this functor is induced by a functor
\begin{equation}\label{eqPsi}\Psi: S_r^{-1}\dv^\op\to S_b^{-1}\Sm.
\end{equation}

For this, we need a lemma:

\begin{lemma}\label{l6.1} Let $\Sm^\ess$ be the category of irreducible smooth $F$-sche\-mes essentially of finite type. Then the full embedding $\Sm\inj \Sm^\ess$ induces an equivalence of categories
\[S_b^{-1}\Sm\iso S_b^{-1}\Sm^\ess.\] 
\end{lemma}

\begin{proof} We use again the techniques of \cite{loc}, to which we refer the reader: actually the first part of the proof of \cite[Prop. 8.4]{loc} works with a minimal change. Namely, with notation as in \loccit, there are 3 conditions (b1) -- (b3) to check:
\begin{itemize}
\item[(b1)] \emph{Given $f,g:X\to Y$ in $\Sm$ and $s:Z\to X$ in $\Sm^\ess$ with $s\in S_b$, $fs=gs$ $\Rightarrow$ $f=g$}: this follows from Lemma \ref{l1} (birational morphisms are dominant).
\item[(b2)] follows from the fact that any essentially smooth scheme may be embedded in a smooth scheme of finite type by an ``essentially open immersion''.
\item[(b3)] We are given $i:X\to \bar X$ and $j:X\to Y$ where $X\in \Sm^{ess}$, $\bar X,Y\in \Sm$ and $i\in S_b$; we must factor $i$ and $j$ through $X\by{s} U$ with $U$ in $\Sm$ and $s$,  $U\to \bar X$ in $S_b$. We take for $U$ the smooth locus of the closure of the diagonal image of $X$ in $\bar X\times Y$. \end{itemize}
\end{proof}

\enlargethispage*{20pt}

To define $\Psi$, it is now sufficient to construct it as a functor $\Psi:S_r^{-1} \dv^\op \to S_b^{-1}\Sm^\ess$. We first construct $\Psi$ on $\dv^\op$ by extending the functor $\Psi_\dom$ of \eqref{eq4.3a} from $\field_s^\op$ to $\dv^\op$. For this, we repeat the construction given on \cite[p. 2041]{a-m}: if  $K\in \dv$ and $\sO$ is a good dvr with quotient field $K$ and residue field $E$, then the morphism $\Spec K \to \Spec \sO$ is an isomorphism in $S_b^{-1}\Sm^\ess$, hence the quotient map $\sO\to E$ induces a morphism $\Spec E\to \Spec K$. 

By Lemma \ref{l4.1}, any morphism in $\dv$ has a unique expression in the form \eqref{eq-dv}, which extends the definition of $\Psi$ to all morphisms. To show that $\Psi$ is a functor, it now suffices to check that it  converts any diagram \eqref{eq4.1} into a commutative diagram, which is obvious by going through its construction. Finally, $\Psi$ factors through $S_r^{-1}\dv^\op$ thanks to Theorem \ref{sb=sr}. It is now clear that the dual of $\Psi$ gives back the Asok-Morel functor \eqref{eq.am}.

As in  \S \ref{mor-rat}, we associate to a rational map $f$ between smooth varieties a morphism in $S_b^{-1}\Sm$, still denoted by $f$. We need the following analogue  of Lemma \ref{l3.3}: 

 \begin{prop}\label{l3.3a} Let $\lambda:K\tto L$ be a morphism in $\dv$. Then, for any smooth model $X$ of $K$ on which $\lambda$ is finite, we have $\Psi(\lambda) = st^{-1}f$, where $f:\Psi(L) \ttto X$ is the corresponding rational map and $s:U\inj \Psi(K)$, $t:U\inj X$ are open immersions of a common open subset $U$.
\end{prop}

\begin{proof} We proceed by induction on the length $n$ of a chain \eqref{eq-dv}: If $n=0$ the claim is trivial and if $n=1$ it is true by construction. If $n>1$, break $\lambda$ as 
\[K \overset{\lambda_1}{\tto} K_{n-1} \overset{\lambda_2}{\tto} K_n\inj  L\]
where $\lambda_1$ has rank $n-1$ and $\lambda_2$ has rank $1$. We now apply Lemma \ref{lvaquie}: since $\lambda$ is finite on $X$, so is $\lambda_1$, and if we write $Z$ for the closure of $c_X(\lambda_1)$, then $z=c_X(\lambda)=c_Z(\lambda_2)$. If $n=0$ the claim is trivial and if $n=1$ it is true by construction. If $n>1$,  Theorem \ref{t0} provides us with $\pi:X_{n-1}\to Z$, $X_{n-1}$ smooth with function field $K_{n-1}$ on which $\lambda_2$ has a centre of codimension $1$. Then we have a diagram
\[\xymatrix{
\Psi(K) & \Psi(K_{n-1}) &\Psi(K_n)\\
U\ar[u]^s\ar[d]_t & U_{n-1}\ar[u]^{s_{n-1}}\ar[d]_{t_{n-1}} & U_n\ar[u]^{s_{n}}\ar[d]_{t_{n}}\\
X & X_{n-1}\ar[d]^\pi&X_n\ar[l]_{g}\\
& Z\ar[ul]_i
}\]
where $i$ is the closed immersion $Z\inj X$, $s,s_{n-1},s_n,t,t_{n-1}$ are open immersions and $g$ is the  closed immersion of a smooth divisor obtained by applying Lemma \ref{l4.3} after possibliy shrinking $X_{n-1}$. Thus $(gt_n,s_n)$ represents the rational map given by the centre of $\lambda_2$ on $X_{n-1}$.  The rational map corresponding to $\lambda_1$ is represented by $(f_{n-1},s_{n-1})$ with 
\[f_{n-1}=i\pi t_{n-1}\]
and the one corresponding to $\lambda_2\lambda_1$ is represented by $(f_n,s_n)$ with 
\[f_n= i\pi gt_n\]
because this is compatible with $\lambda_2\lambda_1$ by Proposition \ref{p1} (also use the uniqueness in Lemma \ref{l2bis}). 

By induction and definition, we have
\[\Psi(\lambda_1)=st^{-1}f_{n-1}s_{n-1}^{-1},\quad \Psi(\lambda_2) = s_{n-1}t_{n-1}^{-1}gt_ns_n^{-1} \]
so we have to show that
\[st^{-1}f_{n-1}s_{n-1}^{-1}s_{n-1}t_{n-1}^{-1}gt_ns_n^{-1}=st^{-1}f_ns_n^{-1}\]
or
\[f_{n-1}t_{n-1}^{-1}gt_n=f_n=i\pi gt_n\] 
which is true because $f_{n-1}=i\pi t_{n-1}$. This concludes the proof.
\end{proof} 

\begin{rk} In this proof, there is no codimension condition on $c_X(\lambda)$. So Theorem \ref{t0} is used twice in a weak form: once, implicitly, to ensure the existence of $X$. Then a second time, to deal with $Z$. But here $\lambda_2$ is a discrete valuation of rank 1, so this special case can perhaps already be obtained by examining the proof of \cite[Th. 31]{zs} (which may have been a source of inspiration for \cite{knaf-kuhlmann}.)
\end{rk}

\begin{lemma}\label{l5.4} a) Let $\dv_*$ be the full subcategory of $\dv$ whose objects are in $\place_*$. Then the diagram of functors
\[\xymatrix{
S_r^{-1}\place_*^\op\ar[r]^{\Psi_*}& S_b^{-1}\Sm^\proper_*\ar[dd]\\
S_r^{-1}\dv_*^\op\ar[u]\ar[d]\\
S_r^{-1}\dv^\op\ar[r]^\Psi& S_b^{-1}\Sm
}\]
is naturally commutative.\\
 b) Let $K,L\in \dv$ and $\lambda,\mu\in \dv(K,L)$ with the same residue field $K'\subseteq L$. Suppose that $\lambda$ and $\mu$ have a common centre on some smooth model of $K$. Then $\Psi(\lambda)=\Psi(\mu)$. 
\end{lemma}

\begin{proof} a) Same argument as in \S \ref{recap}, using the natural transformation $\Spec \Rightarrow \Sigma$ of  \eqref{eqeta}. 
b) follows from Proposition \ref{l3.3a} (compare proof of Theorem \ref{cplsm} b) in \S \ref{scplsm}). 
\end{proof}

\subsection{Homotopy of places}

\begin{defn}\label{deq1} Let $K, L \in \place$. Two places
$\lambda_0,\lambda_1: K \tto L$ are {\it elementarily
homotopic} if there exists a place $\mu: K \tto L(t)$
such that $s_i\circ\mu=\lambda_i, i=0,1,$ where $s_i: L(t)
\tto L$ denotes the place corresponding to specialisation at
$i$.
\end{defn}


The property of two places being elementarily homotopic is
preserved under composition on the right. Indeed if $\lambda_0$ and
$\lambda_1$ are elementarily homotopic and if $\mu:M\tto K$ is
another place, then obviously so are $\lambda_0\circ \mu$ and
$\lambda_1\circ \mu$. If on the other hand $\tau:L\tto M$ is
another place, then $\tau\circ \lambda_0$ and
$\tau\circ\lambda_1$ are not in general elementarily homotopic (we are indebted to Gabber for
pointing this out), as one can see for example from the uniqueness of factorisation of places   \cite[p. 10]{zs}.

Consider the equivalence relation $\h$ generated by elementary
homotopy (\cf Definition \ref{d1.5}). So $\h$ is the coarsest equivalence relation on morphisms
in $\place$ which is compatible with left and right composition and such that two
elementarily homotopic places are equivalent with respect to $\h$.

\begin{defn}[\cf Def. \protect{\ref{d1.5}}]\label{deq2} We denote by $\C$
the factor
category of $\place$ by the homotopy relation $\h$.
\end{defn}

Thus the objects of $\C$ are function fields, while the set
of morphisms consists of equivalence classes of homotopic places between
the function fields. There is an obvious full surjective functor
$$\Pi:\place\to\C.$$

The following proposition provides a more elementary description
of $S_r^{-1}\place$ and of the localisation functor.

\begin{prop}\label{leq1}
There is a unique isomorphism of categories 
\[\C\to S_r^{-1}\place\]
which makes the diagram of categories and functors
\[\xymatrix{
&\place\ar[dl]_\Pi\ar[dr]^{S_r^{-1}}\\
\C\ar[rr]^{\sim}&& S_r^{-1}\place
}\]
commutative. In particular, the localisation functor $S_r^{-1}$ is
full and its fibres are the equivalence classes for $\h$. These results remain true when restricted to the subcategory  $\dv$.
\end{prop}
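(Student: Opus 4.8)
The plan is to realise the asserted isomorphism as a pair of mutually inverse functors coming from two universal properties: that of the localisation functor $P\colon\place(F)\to S_r^{-1}\place(F)$ and that of the projection $\Pi\colon\place(F)\to\C$ onto the factor category (Definition \ref{d1.5} and the discussion following it). First I would check that $\Pi$ inverts every element of $S_r$, so that by \cite[I.1]{gz} it factors through a functor $\bar\Pi\colon S_r^{-1}\place(F)\to\C$ with $\bar\Pi\circ P=\Pi$; then that $P$ equalises homotopic places, so that it factors through $\bar P\colon\C\to S_r^{-1}\place(F)$ with $\bar P\circ\Pi=P$; and finally that $\bar\Pi$ and $\bar P$ are inverse to one another.

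The one genuinely computational point, which I expect to be the crux, is that each $s\in S_r$ becomes invertible in $\C$. Writing a purely transcendental extension as a tower of extensions of transcendence degree one, it suffices to treat $s\colon K\inj K(t)$. A candidate inverse is the specialisation $r\colon K(t)\tto K$, $t\mapsto 0$; clearly $r\circ s=\operatorname{id}_K$. For the other composite I would exhibit an elementary homotopy between $s\circ r$ and $\operatorname{id}_{K(t)}$: taking $L=K(t)$ in Definition \ref{deq1}, the trivial place $\mu\colon K(t)\tto K(t)(u)$ determined by $t\mapsto tu$ satisfies $s_1\circ\mu=\operatorname{id}_{K(t)}$ and $s_0\circ\mu=s\circ r$, where $s_0,s_1\colon K(t)(u)\tto K(t)$ are specialisation of $u$ at $0$ and at $1$. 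Hence $\Pi(r)$ is a two-sided inverse of $\Pi(s)$ in $\C$, so $\Pi$ inverts $S_r$ and $\bar\Pi$ exists.

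For the reverse factorisation it is enough to show that $P$ identifies elementarily homotopic places: the relation $f\sim g\iff P(f)=P(g)$ is a congruence by functoriality of $P$ (Definition \ref{d1.5}), and $\h$ is by definition the congruence generated by elementary homotopy. So let $\lambda_0,\lambda_1\colon K\tto L$ be elementarily homotopic via $\mu\colon K\tto L(t)$ with $s_i\circ\mu=\lambda_i$. In $S_r^{-1}\place(F)$ the inclusion $s\colon L\inj L(t)$ lies in $S_r$, hence is invertible; since $s_0\circ s=\operatorname{id}_L=s_1\circ s$, inverting $s$ forces $P(s_0)=P(s)^{-1}=P(s_1)$, whence $P(\lambda_0)=P(s_0)P(\mu)=P(s_1)P(\mu)=P(\lambda_1)$. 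This yields $\bar P$.

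I would then conclude by the standard uniqueness argument, exactly as in the criterion recorded in \cite[Prop. 1.3.3]{GP}: from $\bar P\circ\bar\Pi\circ P=\bar P\circ\Pi=P$ and the universal property of $P$ one gets $\bar P\circ\bar\Pi=\operatorname{id}$, and from $\bar\Pi\circ\bar P\circ\Pi=\bar\Pi\circ P=\Pi$ and the universal property of $\Pi$ one gets $\bar\Pi\circ\bar P=\operatorname{id}$. As both functors are the identity on objects, $\bar P$ is the desired isomorphism $\C\to S_r^{-1}\place(F)$, and it is the unique functor with $\bar P\circ\Pi=P$ because $\Pi$ is the identity on objects and surjective on morphisms. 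The ``in particular'' assertions are then immediate: $P=\bar P\circ\Pi$ is full since $\Pi$ is full and $\bar P$ is an isomorphism, and $P(f)=P(g)$ iff $\Pi(f)=\Pi(g)$ iff $f\sim_{\h}g$, so the fibres of $P$ are precisely the $\h$-classes. The only substantive input beyond formal nonsense is the homotopy $t\mapsto tu$ of the second paragraph; everything else is bookkeeping with universal properties, and nothing uses resolution of singularities, consistent with the claim that the result holds in all characteristics.
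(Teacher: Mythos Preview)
Your proof is correct and follows essentially the same approach as the paper: the key homotopy $t\mapsto tu$ is exactly the paper's ``$t\mapsto st$'', and the two factorisations through the universal properties of $\Pi$ and of the localisation are the same as in the paper, just presented in the opposite order. The only minor addition you make is the explicit reduction of a general $s\in S_r$ to a tower of transcendence-degree-one steps, which the paper leaves implicit.
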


\begin{proof}\footnote{See also \protect\cite[Remark 1.3.4]{GP} for a closely related
statement.} We first note that any two homotopic places become equal in
$S_r^{-1}\place$. Clearly it suffices to prove this when they are
elementarily homotopic. But then $s_0$ and $s_1$ are left inverses of the
natural inclusion $i:L\to L(t)$, which becomes an isomorphism in
$S_r^{-1}\place$. Thus $s_0$ and $s_1$ become equal in
$S_r^{-1}\place$.
So the 
localisation functor
$\place\to S_r^{-1}\place$ canonically factors through $\Pi$ into a
functor $\C~~\longrightarrow~~ S_r^{-1}\place$.

On the other hand we claim that, with the above notation, $i\circ
s_0:L(t)\tto L(t)$ is homotopic to $1_{L(t)}$ in $\place$. Indeed they
are elementarily homotopic via the place $L(t)\tto L(t,s)$ (in this case
an inclusion) that is the identity on $L$ and maps $t$ to $st$. Hence the
projection functor $\Pi$ factors as $S_r^{-1}\place\to \C$,
and it is plain that this functor is inverse to the previous
one.

The claim concerning $\dv$ is clear since the above proof only used good dvr's.
\end{proof}

\subsection{Another equivalence of categories} In this subsection, we study the ``fibres'' of the functor  $\Psi$ of \eqref{eqPsi} in the light of the last condition of \cite[\S 6]{a-m}, (A3). Using Proposition \ref{leq1}, we may view $\Psi$ as a functor
\[\Psi:(\dv/\h)^\op\to S_b^{-1}\Sm.\]

Condition (A3) of \cite[\S 6]{a-m} for a functor $\sS\in \sF_F^r-\Set$ requires that for any $X\in \Sm$ with function field $K$, for any $z\in X^{(2)}$ (with separably generated residue field) and for any $y_1,y_2\in X^{(1)}$ both specialising to $z$, the compositions
\[\sS(K)\to \sS(F(y_i))\to \sS(z), \quad i=1,2\]
are equal. We can interpret this condition in the present context by introducing the equivalence relation $\h_{AM}$ in $\dv$ generated by $\h$ and the following relation $\equiv$:
\begin{quote}\it 
Given $K, L\in \dv$ and two places $\lambda_1,\lambda_2:K\tto L$ of the form
\begin{equation}\label{eq.AM}
\begin{CD}
K@>\mu_1>> K_1@>\nu_1>> L\\
K@>\mu_2>> K_2@>\nu_2>> L
\end{CD}
\end{equation}
where $\mu_1,\nu_1,\mu_2,\nu_2$ stem from good dvr's,  $\lambda_1\equiv \lambda_2$ if $\lambda_1$ and $\lambda_2$ have a common centre with residue field $L$ on some smooth model of $K$. 
\end{quote}



By Yoneda's lemma, \cite[Th. 6.1.7]{a-m} then yields an equivalence of categories
\begin{equation}\label{eq6.1}
(\dv/\h_{AM})^\op\iso S_b^{-1}\Sm.
\end{equation}

Here we implicitly used Lemma \ref{l5.4} b) and Theorem \ref{t0} to see that the functor $(\dv/\h)^\op\to S_b^{-1}\Sm$ factors through $\h_{AM}$,  as well as the following lemma:

\begin{lemma} Let $\psi:\sC\to \sD$ be a functor such that the induced functor $\psi^*:\sD^\vee\to \sC^\vee$ is an equivalence of categories. Then $\psi$ is fully faithful, hence an equivalence of categories if it is essentially surjective.
\end{lemma}

(Note that the essential surjectivity of \eqref{eq6.1} is obvious.)

\begin{proof} By \cite[I.5.3]{SGA4}, $\psi^*$ has a left adjoint $\psi_!$ which commutes naturally with $\psi$ via the Yoneda embeddings $y_\sC,y_\sD$. Since $\psi^*$ is an equivalence of categories, so is $\psi_!$; the conclusion then follows from the full faithfulness of $y_\sC$ and $y_\sD$.
\end{proof}

We now slightly refine the equivalence \eqref{eq6.1}:

\enlargethispage*{30pt}

\begin{thm}\label{t4.1} a) The functor $\Psi$ induces an equivalence of categories:
\[\overline\Psi:(\dv/\h')^\op\iso S_b^{-1}\Sm\]
where $\h'$ is the equivalence relation generated by $\h$ and the relation \eqref{eq.AM} restricted to the tuples $(\mu_1,\nu_1,\mu_2,\nu_2)$ such that  $\nu_2$ is of the form  $s_0:L(t)\tto L$ (specialisation at $0$).  In particular, $\Psi$ is full.\\ 
b) Any morphism of $\dv/\h'$ may be written in the form $\iota^{-1}f$ for $f$ a morphism of the form \eqref{eq.elem} and $\iota$ a rational extension of function fields.
\end{thm}

\begin{proof} a) Let us show that $\h'=\h_{AM}$. Starting from $K$, $\lambda_1$ and $\lambda_2$ as above, we get a smooth model  $X$ of $K$ and $z,y_1,y_2\in X$ with $z$ of codimension $2$, such that $\mu_i$ is specialisation to $y_i$ and $\nu_i$ is specialisation from $y_i$ to $z$. Shrinking, we may assume that the closures $Z,Y_1,Y_2$ of $z,y_1,y_2$ are smooth. Let $X'=\Bl_Z(X)$ be the blow-up of $X$ at $Z$ and let $Y'_1,Y'_2$ be the proper transforms of $Y_1$ and $Y_2$ in $X'$. The exceptional divisor $P$ is a projective line over $Z$ and $Z_i=P\cap Y'_i$ maps isomorphically to $Z$ for $i=1,2$. We then get new places
\begin{equation}\label{eq.AM2}
\begin{CD}
\lambda'_1:K@>\mu'>> M@>\nu'_1>> L\\
\lambda'_2:K@>\mu'>> M@>\nu'_2>> L
\end{CD}
\end{equation}
where $M=F(P)$, $L=F(Z)$ and $\lambda'_i\equiv \lambda_i$.

In $\dv/\h\simeq S_r^{-1}\dv$, the morphisms $\nu'_1$ and $\nu'_2$ are inverse to the rational extension $L\inj L(t)\simeq M$, hence are equal, which concludes the proof that $\h'=\h_{AM}$.  The fullness of $\Psi$ now follows from the obvious fullness of $\dv\to \dv/\h'$.

The argument in the proof of a) shows in particular that any composition $\nu\circ \mu$ of two good dvr's is equal in $\dv/\h'$ to such a composition in which $\nu$ is inverse to a purely transcendental extension of function fields:  b) follows from this by induction on the number of dvr's appearing in a decomposition \eqref{eq-dv}. 
\end{proof}

\begin{rks}  
%
1) Via $\overline\Psi$, Theorem \ref{t4.1}  yields a structural result for morphisms in $S_b^{-1} \Sm$,  closely related to Proposition \ref{p3.4} c) but weaker.  See however Theorem \ref{t6.1} below.

2) We don't know any example of an object in $\sF_F^r-\Set$ which verifies (A1), (A2) and (A4) but not (A3): it would be interesting to exhibit one.
\end{rks}

\subsection{$R$-equivalence} Recall the following definition of Manin:

\begin{defn}\label{dreq} a) Two
  rational points $x_0,x_1$ of a (separated) $F$-scheme $X$ of finite type are
   \emph{directly
   $R$-equivalent} if there is a rational map $f:\P^1\ttto X$ defined
   at $0$ and $1$ and such that $f(0)=x_0$, $f(1)=x_1$.\\
b) $R$-equivalence on $X(F)$ is the equivalence relation generated by
   direct $R$-equivalence.
\end{defn}

Recall that, for any $X,Y$, we have an isomorphism
\begin{equation}\label{prodr}
(X\times Y)(F)/R\iso X(F)/R\times Y(F)/R.
\end{equation}

The proof is easy.

If $X$ is proper, any rational map as in Definition \ref{dreq} a) extends to a morphism; the notion of $R$-equivalence is therefore the same as Asok-Morel's notion of $\A^1$-equivalence in \cite{a-m}. Another of their results is then, in the above language:

\begin{thm}[\protect{\cite[Th. 6.2.1]{a-m}}]\label{ta-m} Let $X$ be a proper $F$-scheme. Then the rule
\[Y\mapsto X(F(Y))/R\]
defines a presheaf of sets $\Upsilon(X)\in(S_b^{-1}\Sm)^\vee$.
\end{thm}

Note that $X\mapsto \Upsilon(X)$ is obviously functorial. 

The main point is that $R$-equivalence classes on $X$ specialise well with respect to good discrete valuations. Such a result was originally indicated by Koll\'ar \cite[p. 1]{kollarsp} for smooth proper schemes over a discrete valuation ring $R$, and proven by Madore \cite[Prop. 3.1]{madore} for projective schemes over $R$.  Asok and Morel's proof uses Lipman's resolution of $2$-dimensional schemes as well as a strong factorisation result of Lichtenbaum; as hinted by Colliot-Th\'el\`ene,  it actually suffices to use the more elementary results of \v Safarevi\v c \cite[Lect. 4, Theorem p. 33]{safa}.

Let $X$ be proper and smooth. Its generic point $\eta_X\in X(F(X))$ defines by Yoneda's lemma a morphism of presheaves
\begin{equation}\label{eq4.5}
\eta(X):y(X)\to \Upsilon(X)
\end{equation}
where $y(X)\in (S_b^{-1}\Sm)^\vee$ is the presheaf of sets represented by $X$; $\eta:X\mapsto \eta(X)$ is clearly a morphism of functors.

\begin{thm}\label{t6.1} $\eta$ is an isomorphism of functors.
Explicitly: for $Y\in \Sm$, $\eta(X)$ induces an isomorphism
\begin{equation}\label{eq4.2}
S_b^{-1}\Sm(Y,X)\iso X(F(Y))/R.
\end{equation}
\end{thm}

\begin{proof} Since $K\mapsto X(K)$ is a functor on $\dv^\op$ (compare \cite[Lemma 6.2.3]{a-m}), we have a commutative diagram for any $Y\in \Sm$: 
\begin{equation}\label{eq5.1}
\xymatrix{
\dv^\op(F(Y),F(X))\ar[r]^{\tilde\eta}\ar[d]_{\Psi}& X(F(Y))\ar[d]^\pi\ar[dl]_\epsilon\\
S_b^{-1}\Sm(Y,X)\ar[r]^\eta& X(F(Y))/R.
}\end{equation}

Here $\tilde\eta$ is obtained from $\eta_X$ by Yoneda's lemma in the same way as \eqref{eq4.5}, $\Psi$ is (obtained from) the functor of  \eqref{eqPsi}, $\pi$ is the natural projection and $\epsilon$ associates to a rational map its class in $S_b^{-1}\Sm(Y,X)$  (see comment just before Proposition \ref{l3.3a}). Here the commutativity of the top triangle follows from Proposition  \ref{l3.3a}. The surjectivity of $\pi$ shows the surjectivity of $\eta$. Note further that $\Psi$ is surjective by Theorem \ref{t4.1} a). This shows that $\epsilon$ is also surjective.

To conclude, it suffices to show that $\epsilon$ factors   through $\pi$, thus yielding an inverse to $\eta$. If $x_0,x_1\in X(F(Y))$ are directly $R$-equivalent, up to shrinking $Y$ we have a representing commutative diagram
\[\xymatrix{
Y \ar[r]^{x_0}_{x_1}\ar[d]_{s_0}^{s_1}& X\\
\P^1_Y\ar[ur]_h
}\]
with $s_0,s_1$ the inclusions of $0$ and $1$. But if we view $X(F(Y))$ and $S_b^{-1}\Sm(Y,X)$ as functors of $F(Y)\in \dv^\op$ (the second one via $\Psi$), then $\epsilon$ is checked to be a natural transformation:  indeed, this is easy in the case of an inclusion of function fields and follows from the properness of $X$ in the case of a good dvr. Hence we get $\epsilon(x_0)=\epsilon(x_1)$ since $S_b^{-1}\Sm(Y,X)\allowbreak\iso S_b^{-1}\Sm(\P^1_Y,X)$ by Theorem \ref{sb=sr}.
\end{proof}

\begin{cor}\label{c5.1} The functor $\theta:S_b^{-1}\Sm_*^\proper\to S_b^{-1} \Sm$ of \eqref{eq.natural} is fully faithful.
\end{cor}

\begin{proof}
For $X,Y\in \Sm^\proper_*$, we have a commutative diagram similar to \eqref{eq5.1} replacing $\dv$ by $\place_*$ and $\Sm$ by $\Sm^\proper_*$.  The map $\eta_*$ corresponding to $\eta$ is obtained from \eqref{eq5.1} by composition, while the map $\tilde \eta_*$ corresponding to $\tilde \eta$ exists because $K\mapsto X(K)$ is a functor on $\place^\op$ by the valuative criterion of properness. Further,  the  map corresponding to $\epsilon$ is well-defined thanks to Proposition \ref{p4a} b) and the top triangle commutes thanks to Lemma \ref{l3.3}.  The natural map from this diagram to \eqref{eq5.1} yields a commutative diagram thanks to Lemma \ref{l5.4}. Moreover, the map corresponding to $\Psi$ is surjective thanks to Theorem \ref{cplsm} a). The same reasoning  as above then shows that $\eta_*$ is bijective: we just have to replace ``up to shrinking $Y$'' by ``up to replacing $Y$ by a birationally equivalent smooth projective variety'', using the graph trick and the definition of $\Sm_*^\proper$.  The graph trick can also be used to reduce the verification that $\epsilon$ is natural to the case where the rational maps involved are in fact morphisms. Hence the conclusion.
\end{proof}

\begin{rk} One could replace $\Sm_*^\proper$ by  $\Sm^\proj_*$ in Corollary \ref{c5.1}
, thus getting a full embedding $S_b^{-1}\Sm^\proj_* \inj S_b^{-1}\Sm^\proper_*$.
\end{rk}

The following corollary generalises \cite[Prop. 10]{ct-s} to any characteristic:

 \begin{cor}\label{c6.1} Let $s:Y\ttto X$ be a rational map, with $X,Y\in \Sm^\proper$, Then $s$ induces an map $s_*:Y(K)/R\to X(K)/R$ for any  $K\in \dv$. Moreover, $s_*$ is a bijection for any $K\in\dv$ if and only if
 the morphism $\tilde s$ associated to $s$ in $S_b^{-1} \Sm$ (see comment just before Proposition \ref{l3.3a})  is an isomorphism.\\
In particular, $s_*:Y(K)/R\iso X(K)/R$ for any  $K\in \dv$ when $s$ is dominant and the field extension $F(Y)/F(X)$ is rational.
\end{cor}

\begin{proof} The morphism $\tilde s$ induces a morphism $S_b^{-1}(U,Y)\to S_b^{-1}(U,X)$ for any $U\in \Sm$, hence the first claim follows from Theorem \ref{t6.1}. ``If'' is obvious, and ``only if'' follows from Yoneda's lemma. Finally,  Theorem \ref{sb=sr}  implies that $\tilde s$ is an isomorphism under the last hypothesis on $s$, hence the conclusion.
\end{proof}

See Theorem \ref{c3.5} for a further generalisation.

\enlargethispage*{30pt}

\subsection{Coronidis loco} Let us go back to the diagram in Lemma \ref{l5.4} a).  Let $\h'_*$ be the equivalence relation on $\dv_*$ defined exactly as $\h'$ on $\dv$ (using objects of $\dv_*$ instead of objects of $\dv$).
On the other hand, let $\h''$ be the equivalence relation on $\place_*$ generated by $\h$ and

\begin{quote}\it For $\lambda,\mu:K\tto L$, 
$\lambda\sim \mu$ if $\lambda$ and $\mu$ have a common centre on some model $X\in \Sm_*^\proper$ of $K$.
\end{quote}

Clearly, the restriction of $\h''$ to $\dv_*$ is coarser than $\h'$; 
hence, using  Theorem \ref{cplsm} b) and Proposition \ref{leq1}, we get an induced naturally commutative diagram:
\[\xymatrix{
(\place_*/\h'')^\op\ar[r]^{\overline\Psi_*}& S_b^{-1}\Sm^\proper_*\ar[dd]^\theta\\
(\dv_*/\h'_*)^\op\ar[u]^a\ar[d]_b\\
(\dv/\h')^\op\ar[r]^{\overline\Psi}& S_b^{-1}\Sm.
}\]

In this diagram, $\overline\Psi_*$  is full and essentially surjective by Theorem 
\ref{cplsm} a), $\overline \Psi$ is an equivalence of categories by Theorem \ref{t4.1} a) and $\theta$ is fully faithful by Corollary \ref{c5.1}. Moreover,  $a$ is full by Lemma \ref{l2bis} and the proof of Lemma \ref{l6}, and essentially surjective by defnition. All this implies:

\begin{thmr}\label{tcor-loc}  If $\car k=0$, all functors in the above diagram are equivalences of categories.
\end{thmr}

\begin{proof} If $\car k=0$, $\dv_*=\dv$ hence $b$ is the identity functor.  In view of the above remarks, the diagram then shows that $a$ is faithful, hence an equivalence of categories. It follows that  $\overline\Psi_*$ is also an equivalence of categories. Finally $\theta$ is essentially surjective, which completes the proof.
\end{proof}

As an application, we get a generalisation of the specialisation theorem to arbitrary places (already obtained in \cite[Cor. 7.1.2]{birat}):

\begin{corr} Suppose $\car F=0$. Let $X\in \Var^\proper$, $K,L\in \place_*$ and $\lambda:K\tto L$ be a place. Then $\lambda$ induces a map
\[\lambda_*:X(K)/R\to X(L)/R.\]
If $\mu:L\tto M$ is another place, with $M\in \place_*$, then $(\mu\lambda)_* = \mu_*\lambda_*$.
\end{corr}

\begin{proof} By Theorem \ref{ta-m}, $K\mapsto X(K)/R$ defines a presheaf on $(\dv/h')^\op$, which extends to a presheaf on $(\place_*/\h'')^\op$ by Theorem \ref{tcor-loc}.
\end{proof}

\section{Linear connectedness of exceptional loci}\label{sbr}


\subsection{Linear connectedness} We have the
  following definition of Chow \cite{chow}:

\begin{defn}\label{dreq-Chow} A (separated) $F$-scheme $X$ of finite type is \emph{linearly connected} if any two points of $X$ (over a
universal domain) may be joined by a chain of rational curves.
\end{defn}

Linear connectedness is closely related to the notion of rational chain-connectedness of Koll\'ar et al., for which we refer to \cite[p. 99, Def. 4.21]{debarre}. In fact:

\begin{prop}\label{r5.1}  The following conditions are equivalent:
\begin{thlist}
\item $X$ is linearly connected.
\item For any algebraically
closed extension $K/F$, $X(K)/R$ is reduced to a point.\\
If  $X$ is a proper $F$-variety, these conditions are equivalent to:
\item $X$ is rationally chain-connected. 
\end{thlist}
\end{prop}

\begin{proof} (ii) $\Rightarrow$ (i) is obvious by definition (take for $K$ a universal domain). For the converse, let $x_0,x_1\in X(K)$. Then $x_0$ and $x_1$ are defined over some finitely
generated subextension $E/F$. By assumption, there exists a universal domain
$\Omega\supset E$ such that $x_0$ and $x_1$ are $R$-equivalent in $X(\Omega)$. Then the algebraic closure $\bar E$ of $E$ embeds into $\Omega$ and $K$. If $x_0$ and $x_1$ are $R$-equivalent in $X(\bar E)$, so are they in $X(K)$; this reduces us to the case where $K\subseteq \Omega$.

Let $\gamma_1,\dots,\gamma_n:\P^1_\Omega\ttto X_\Omega$ be a chain of rational curves linking
$x_0$ and $x_1$ over $\Omega$. Pick a finitely generated extension $L$ of $K$ over which all the
$\gamma_i$ are defined. 

We may write $L=K(U)$ for some $K$-variety $U$. Then the $\gamma_i$ define rational maps
$\tilde \gamma_i:U\times \P^1\ttto X$. Since each $\gamma_i$ is defined at $0$ and $1$ with $\gamma_i(1)=\gamma_{i+1}(0)$, we may if needed shrink $U$ so that the domains of definition of all the $\tilde \gamma_i$ contain $U\times\{0\}$ and $U\times\{1\}$. Moreover, these restrictions coincide in the same style as above, since they do at the generic point of $U$.  Pick a rational point $u\in U(K)$: then the fibres of the $\tilde\gamma_i$ at $u$ are rational curves defined over $K$ that link $x_0$ to $x_1$.

A rationally chain connected $F$-scheme is a proper variety by definition; then (i) $\iff$ (iii) if $F$ is uncountable by \cite[p. 100, Remark
4.22 (2)]{debarre}. On the other hand, the property of linear connectedness is clearly invariant
under algebraically closed extension, and the same holds for  rational chain-connectedness by  \cite[p. 100, Remark 4.22 (3)]{debarre}. Thus (i) $\iff$ (iii) holds in general.
\end{proof}

We shall discuss the well-known relationship with
rationally connected varieties in \S \ref{slc}.

Proposition \ref{r5.1} suggests the following definition:

\begin{defn}\label{dreq-Chow-s} An $F$-scheme $X$ of finite type is \emph{strongly linearly connected} if  $X(K)/R=*$ for any separable extension $K/F$.
\end{defn}

\subsection{Theorems of Murre, Chow, van der Waerden and Gabber}\label{5.3}
We start with the following not so well-known but nevertheless basic theorem
of Murre \cite{murre}, which was later rediscovered by Chow and van der Waerden
\cite{chow,vdw}.  

\begin{thm}[Murre, Chow, van der Waerden]\label{murre} Let $f:X\to Y$ be
  a projective birational morphism of $F$-varieties and $y\in Y$ be a
  smooth rational
  point. Then the fibre $f^{-1}(y)$ is linearly connected. In particular, by Proposition 
\ref{r5.1}, $f^{-1}(y)(K)/R$ is reduced to a point for any algebraically closed extension
$K/F$.
\end{thm}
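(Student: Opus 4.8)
The plan is to prove the statement by induction on $d=\dim Y$, writing $S(d)$ for the assertion of the theorem when $\dim Y=d$, after a few reductions. First I would pass to the geometric setting: since linear connectedness (Definition \ref{dreq} c)) is tested over a universal domain and the formation of $f^{-1}(y)$ commutes with a base change $F\subseteq\Omega$, I may assume $F$ algebraically closed and $y$ a closed rational point, and localise $Y$ to an affine neighbourhood on which it is smooth. Because $Y$ is normal at $y$ and $f$ is proper birational, Zariski's main theorem shows $f^{-1}(y)$ is connected; two irreducible components that meet then share a point through which rational chains in each may be concatenated, so it suffices to prove that \emph{each irreducible component} of $f^{-1}(y)$ is linearly connected. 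The base cases $d\le 1$ are immediate: over a smooth (hence normal) point of a curve a proper birational morphism is an isomorphism, so the fibre is a single point.

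For the inductive step I would blow up the smooth point. Let $\pi\colon Y_1=Bl_yY\to Y$, with exceptional divisor $E_1\cong\mathbf{P}^{d-1}$, a smooth divisor of the smooth variety $Y_1$. Applying the graph trick (\S\ref{p1.1}) to the birational map $Y_1\ttto X$ produces a projective birational morphism $g_1\colon X_1\to Y_1$ together with a projective birational (hence surjective) morphism $\sigma\colon X_1\to X$ compatible over $Y$, so that $f^{-1}(y)=\sigma(g_1^{-1}(E_1))$. The key geometric input, exactly as in the proof of Lemma \ref{l3.5}, is that since $Y_1$ is smooth the locus where $g_1$ fails to be an isomorphism maps into a closed subset of codimension $\ge 2$ (\cite[1.40]{debarre}); as $E_1$ is a divisor, $g_1$ is an isomorphism over its generic point, so the strict transform $W_0\subseteq g_1^{-1}(E_1)$ maps properly and birationally onto $E_1\cong\mathbf{P}^{d-1}$. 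Now the induction hypothesis $S(d-1)$, applied to $W_0\to\mathbf{P}^{d-1}$ over the (automatically smooth) points of $\mathbf{P}^{d-1}$, says the fibres of this map are linearly connected; combining this with the rationality of $\mathbf{P}^{d-1}$ — join two image points by a line, lift the strict transform of the line to a rational curve in $W_0$, and jump inside the two end fibres using $S(d-1)$ — shows that $W_0$, and therefore its image in $f^{-1}(y)$, is linearly connected. (This construction can equally be phrased through the valuative lifting of arcs through $y$, in the spirit of Lemmas \ref{l6} and \ref{l4}, which is the form in which the paper's machinery would apply.)

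The remaining, and I expect main, difficulty is to reach the components of $f^{-1}(y)$ that do \emph{not} lie in the image of the strict transform $W_0$. These arise from components of $g_1^{-1}(E_1)$ contracted relative to $E_1$, whose image in $Y_1$ has codimension $\ge 2$; they are not controlled by a single application of $S(d-1)$, since the corresponding fibres of $g_1$ sit over the $d$-dimensional $Y_1$. I would handle them by iterating the construction — blowing up the lower-dimensional (still smooth-point) centres of these contracted loci and running a secondary induction on the dimension of their image in $Y_1$ — so that every point of the fibre is eventually captured by a strict-transform argument as above. Together with the reduction to individual components via the Zariski-main-theorem connectedness, this yields linear connectedness of all of $f^{-1}(y)$. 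The final assertion about $f^{-1}(y)(K)/R$ over algebraically closed $K$ then follows at once from Lemma \ref{l5.2.2}.
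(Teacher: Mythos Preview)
The paper does not give its own proof of this theorem: it is quoted as a classical result with references to Murre \cite{murre}, Chow \cite{chow} and van der Waerden \cite{vdw}, and no argument is supplied. (The only related proof in the paper is Gabber's letter in Appendix \ref{gabber}, which establishes the strictly stronger Theorem \ref{chowstrong} by a quite different method: reduction to a two-dimensional regular local base via a fibre-product of discrete valuation rings, followed by Lipman's resolution for surfaces and step (1) on projective-space fibres.) So there is nothing in the paper to compare your argument against; the question becomes whether your sketch stands on its own.

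Your outline is reasonable up to and including the construction of $W_0\to E_1\cong\mathbf{P}^{d-1}$ and the observation, taken from the proof of Lemma \ref{l3.5}, that $g_1$ is an isomorphism over the generic point of $E_1$. The serious problem is exactly where you flag it: the components of $g_1^{-1}(E_1)$ that are contracted over $E_1$. Your proposed fix, ``blowing up the lower-dimensional (still smooth-point) centres of these contracted loci and running a secondary induction'', does not work as stated. The image in $Y_1$ (or in $E_1$) of such a contracted component is a closed subset of codimension $\ge 2$, but there is no reason for it to be a smooth point, nor even a smooth subvariety; blowing it up will in general destroy the regularity of the ambient space, so you lose the hypothesis that lets you invoke \cite[1.40]{debarre} at the next stage. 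Moreover, there is no obvious numerical invariant that decreases under this iteration, so termination is not clear. In short, you have correctly located the heart of the matter but have not supplied an argument that closes it.

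If you want to repair this, one option is to imitate Gabber's strategy in Appendix \ref{gabber}: rather than attacking the whole fibre at once, take two given points of $f^{-1}(y)$, realise them as specialisations of arcs $\Spec$ of discrete valuation rings mapping to $X$ with generic point in the isomorphism locus, push these arcs down to $\Spec\sO_{Y,y}$, embed their union in a two-dimensional regular local $Y$-scheme, and resolve the resulting rational map to $X$ by successive point blow-ups on a surface. This sidesteps the need to control \emph{all} components of $f^{-1}(y)$ simultaneously and replaces your uncontrolled iteration by the well-understood sequence of closed-point blow-ups on a regular surface.
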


For the sake of completeness, we give the general statement of Chow, which does not require a
base field:

\begin{thm}[Chow]\label{chow} Let $A$ be a regular local ring and $f:X\to \Spec A$ be a
projective birational morphism. Let $s$ be the closed point of $\Spec A$ and $F$ its residue
field. Then the special fibre $f^{-1}(s)$ is linearly connected (over $F$).
\end{thm}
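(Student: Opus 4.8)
The plan is to argue by induction on $d=\dim A$, reducing the dimension by slicing with a general regular parameter and then reassembling the fibre from the resulting lower-dimensional fibres, which are linearly connected by the inductive hypothesis. Since linear connectedness is defined over a universal domain (Definition \ref{dreq} c)), it is a geometric property of the $F$-scheme $f^{-1}(s)$, so I may freely enlarge the field and assume rational points are available where needed. As $X$ is integral and $A$ is regular, hence normal, Zariski's main theorem gives $f_*\sO_X=\sO_{\Spec A}$, so that $f^{-1}(s)$ is connected; this connectedness is what will let the pieces produced below link up. The base cases $d\le 1$ are immediate: there $\dim X=d\le 1$, and since $X$ is integral and dominates $\Spec A$ no component of $X$ can lie over $s$, so $f^{-1}(s)$ is finite; being connected, it is a single point.

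For the inductive step ($d\ge 2$), the first task is to organise the slicing. Because $X$ is integral and $A$ is normal, $f$ is an isomorphism over every height-one prime $\fp$ of $A$: there $A$ localises to a discrete valuation ring, the localisation of $f$ is integral of dimension one with $0$-dimensional special fibre, hence projective and quasi-finite, thus finite, and a finite birational morphism over a normal base is an isomorphism. Therefore the locus $Z\subset\Spec A$ where $f$ fails to be an isomorphism has codimension $\ge 2$. Consequently, for a regular parameter $a\in\fm\setminus\fm^2$ the divisor $V(a)$ is not contained in $Z$, the ring $A/(a)$ is regular local of dimension $d-1$, and the strict transform $X_a$ (the closure in $X$ of $f^{-1}(V(a)\setminus Z)$) maps by a projective birational morphism $X_a\to\Spec A/(a)$. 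By the inductive hypothesis its special fibre $(X_a)_s\subseteq f^{-1}(s)$ is linearly connected.

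The heart of the argument — and the step I expect to be the main obstacle — is the reassembly. As $a$ ranges over the regular parameters, i.e. over the nonzero classes in $\fm/\fm^2$, the slices $(X_a)_s$ should sweep out $f^{-1}(s)$, and they are naturally organised into a family $\mathcal{X}\to\P(\fm/\fm^2)\cong\P^{d-1}$ whose base is a projective space (hence linearly connected) and whose fibres are the linearly connected subvarieties $(X_a)_s$. The plan is then to deduce linear connectedness of $f^{-1}(s)$ from that of the base and the fibres: given two points of $f^{-1}(s)$, join two parameters realising them by a line $\P^1\subset\P^{d-1}$, restrict the family over this line, and connect the points by a chain of rational curves inside the resulting one-parameter family. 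Making this last move rigorous is precisely the delicate point, since it requires producing rational chains crossing the fibres of a family over $\P^1$ with only rationally chain-connected fibres, while controlling reducible or non-reduced slices; this is the genuine content of Murre's and Chow's theorems and is not formal. In the equicharacteristic case one can sidestep it: Cohen's structure theorem identifies $\hat A$ with $F[[t_1,\dots,t_d]]$, exhibiting $\Spec A$ as the local model of a smooth rational point on $\A^d_F$, and spreading $f$ out to an algebraic projective birational morphism with the same special fibre reduces the statement to Theorem \ref{murre}. The mixed-characteristic case, however, seems to genuinely require Chow's original valuation-theoretic analysis.
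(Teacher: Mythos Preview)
The paper does not prove this theorem: it is stated as a classical result of Chow \cite{chow} (preceded by Murre \cite{murre} and rediscovered by van der Waerden \cite{vdw}), and the reader is implicitly referred to the original sources. There is therefore no ``paper's proof'' to compare against; I can only assess your argument on its own merits.

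Your sketch is honest about its gaps, and those gaps are genuine. The reassembly step --- passing from linear connectedness of each slice $(X_a)_s$ and of the parameter space $\P(\fm/\fm^2)$ to linear connectedness of $f^{-1}(s)$ --- is not a formal consequence of the two ingredients, and you say so yourself. A family over $\P^1$ whose fibres are each rationally chain-connected need not have rationally chain-connected total space without flatness or equidimensionality control, and you have not produced any; the family $\mathcal X\to\P^{d-1}$ is only alluded to, not constructed, and different slices $(X_a)_s$ can have different dimensions (some $V(a)$ may be contained in the exceptional locus after all once one passes to higher-codimension strata). So the inductive step does not close.

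Your fallback in the equicharacteristic case is also incomplete. Cohen gives $\hat A\cong F[[t_1,\dots,t_d]]$ and base change to $\hat A$ preserves the special fibre, but $\Spec\hat A$ is not a variety, and ``spreading $f$ out'' to a projective birational morphism of $F$-varieties with the same special fibre over a smooth rational point needs a genuine descent argument (to a finitely generated $F$-subalgebra of $\hat A$, together with a check that birationality and smoothness at the image of $s$ survive). This can be carried out, but it is not automatic. In mixed characteristic even this route is unavailable, as you acknowledge. In short, your outline locates the right structure but does not supply the key step, which is precisely where Chow's actual work lies.
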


Gabber has recently refined these theorems:

\begin{thm}[Gabber]\label{chowstrong} Let $A,X,f,s,F$ be as in Theorem \ref{chow}, but assume
only that $f$ is proper. Let $X_\reg$ be the regular locus of $X$ and $f^{-1}(s)^\reg=
f^{-1}(s) \cap X_\reg$, which is known to be open in $f^{-1}(s)$. Then, for any extension
$K/F$, any two points of
$f^{-1}(s)^\reg(K)$ become $R$-equivalent in $f^{-1}(s)(K)$.\\
In particular, if $X$ is regular, then $f^{-1}(s)$ is strongly linearly connected.
\end{thm}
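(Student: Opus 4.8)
The plan is to localise the problem at the two given points and to reduce the merely proper case to the projective one treated in Theorem \ref{chow}, the genuine new input being the construction of connecting chains \emph{defined over $K$} rather than over an algebraically closed field. First I would record that the special fibre $Z:=f^{-1}(s)$ is a proper $F$-scheme and $Z^\reg:=f^{-1}(s)\cap X_\reg$ an open subscheme of it, so that for a fixed extension $K/F$ the whole statement concerns $K$-rational points of $Z$ and chains of rational curves $\P^1_K\ttto Z$; thus it suffices to treat a single pair $x_0,x_1\in Z^\reg(K)$ and to exhibit a chain of $K$-rational curves in $Z$ joining them (Definition \ref{dreq}). The key point to keep in mind is that the hypothesis makes $x_0,x_1$ regular points of the \emph{total space} $X$, which is exactly what should allow us to manufacture curves over $K$ and not merely over $\bar K$.

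Next I would reduce to the projective situation. By the relative Chow lemma (Theorem \ref{Chow}) there is a projective birational $p:\tilde X\to X$ with $f\circ p$ projective; applying Chow's Theorem \ref{chow} to the projective birational morphism $f\circ p:\tilde X\to\Spec A$ shows that $(f\circ p)^{-1}(s)=p^{-1}(Z)$ is linearly connected, and applying the same theorem to the base change of $p$ over the regular local ring $\sO_{X,x_i}$ (regular since $x_i\in X_\reg$) shows that each fibre $p^{-1}(x_i)$ is linearly connected over $K$. To exploit the regularity over $K$ I would then blow up the two points: since $x_0,x_1$ are regular $K$-points of $X$, the blow-up $\pi:X'\to X$ has exceptional divisors $E_0,E_1\cong\P^{d-1}_K$ ($d=\dim X=\dim A$), each of which is $R$-trivial over $K$ (any two $K$-points of a projective space are joined by a $K$-line). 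Since $x_i\in Z$, we have $E_i\subseteq\pi^{-1}(Z)=(f\circ\pi)^{-1}(s)$, and the strategy is to connect $E_0$ to $E_1$ by a chain of $K$-rational curves inside this special fibre and then push the chain forward by $\pi$, which contracts $E_i$ to $x_i$ and hence yields the desired chain joining $x_0$ and $x_1$ in $Z(K)$.

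The hard part will be producing this connecting chain \emph{over $K$}: Chow's and Murre's theorems (Theorems \ref{chow}, \ref{murre}) together with Lemma \ref{l5.2.2} only give linear connectedness, i.e. chains over an algebraically closed field, whereas the conclusion of the theorem is about $R$-equivalence over the possibly non-closed field $K$. This is precisely where the regular-point hypothesis must enter in an essential way, and not merely to guarantee $K$-points: one has to descend the geometric chains to $K$, using that through a regular $K$-point of $X$ lying in the fibre one can find honest $K$-rational curves (exactly as for the lines in the exceptional $\P^{d-1}$), and then splice these local constructions together along the linearly connected fibre $p^{-1}(Z)$. Handling simultaneously the merely proper (non-projective) case after the Chow-lemma modification—in particular lifting the points $x_i$ through $p$ and checking that the modification does not destroy the regularity one needs—seems to me the main technical obstacle, and I would expect Gabber's argument to contain a device carrying out exactly this descent over $K$ without appealing to resolution of singularities.
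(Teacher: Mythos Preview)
Your proposal correctly identifies the heart of the matter --- that Chow's and Murre's theorems only give chains over an algebraically closed field, and the whole content of the theorem is to produce chains over $K$ --- but then stops precisely there. Blowing up $x_0,x_1$ and replacing them by exceptional $\P^{d-1}_K$'s does not help: connecting a $K$-point of $E_0$ to a $K$-point of $E_1$ inside the special fibre of $X'\to\Spec A$ is exactly the same problem you started with (two regular $K$-points to be joined over $K$), now on a different model. Likewise, the Chow-lemma reduction to a projective $\tilde X$ gives you linear connectedness of $p^{-1}(Z)$ by Theorem \ref{chow}, but again only over an algebraic closure; you have no mechanism to descend these chains, and the fibres $p^{-1}(x_i)$ are linearly connected over $K$ only by the same theorem you are trying to prove (since $p$ is merely projective birational onto the regular local ring $\sO_{X,x_i}$, not a sequence of blow-ups with regular centres). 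So the proposal is circular at the crucial step.

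Gabber's actual argument is of a completely different nature and does not go through Chow's lemma or any global connectedness statement. After reducing to $K/F$ finite, he passes through each regular point $x_i$ a one-dimensional regular closed subscheme $C_i$ of $\Spec\sO_{X,x_i}$ whose generic point lands in the locus where $f$ is an isomorphism; he then thickens $C_i$ to a finite flat $D_i\to C_i$ with closed fibre $\Spec K$, glues $D_0$ and $D_1$ along $\Spec K$, and embeds the result as a closed subscheme of a \emph{two-dimensional} regular local $A$-scheme $Y$. The point of forcing dimension $2$ is that Lipman's theorem then resolves the rational map $Y\dashrightarrow X$ by a finite sequence of blow-ups at closed points, producing $Y'\to Y$ with a morphism $Y'\to X$. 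Now $x_0,x_1$ lift to $K$-points of the special fibre of $Y'\to Y$, and since $Y'\to Y$ is a composite of point blow-ups of a regular surface, all fibres are chains of $\P^1$'s and one gets $R$-equivalence over $K$ for free. The ``device'' you were hoping for is thus: reduce to dimension $2$ via local arcs, then use that resolution of surfaces is available and has projective-line fibres.
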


See Appendix \ref{gabber} for a proof of Theorem \ref{chowstrong}.

\begin{thm}[Gabber \protect{\cite{gabber}}]\label{gabber2} If $F$ is a field, $X$ is a regular irreducible $F$-scheme
of finite type and
$K/F$ a field extension, then the map
\[ \lim X'(K)/R\to X(K)/R\] 
has a section, which is contravariant in $X$ and covariant in $K$. The limit is over the proper
birational $X'\to X$.
\end{thm}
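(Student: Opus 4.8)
The plan is to build the section by transporting a chosen point of $X$ along a single valuation and then to invoke Gabber's strong Chow lemma (Theorem \ref{chowstrong}) to show that the resulting class in the inverse limit is independent of all the choices made. Concretely, given $[x]\in X(K)/R$, I would first pick a representative $x\in X(K)$ with scheme-theoretic image $\xi\in X$. As $X$ is regular, $A=\sO_{X,\xi}$ is a regular local ring, so Lemma \ref{l6} furnishes a place $w\colon F(X)\tto\kappa(\xi)$ centred at $\xi$ with residue field $\kappa(\xi)$; composing with the inclusion $\kappa(\xi)\hookrightarrow K$ attached to $x$ yields a place $v\colon F(X)\tto K$ whose centre on $X$ is $x$. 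Its valuation ring is then finite on every proper birational $X'\to X$ by the valuative criterion of properness, and by Lemma \ref{l4} a) the centres $x_{X'}:=c_{X'}(v)\in X'(K)$ are compatible with the transition maps of the system. Reading them as $R$-classes gives a coherent family $\{[x_{X'}]\}\in \lim X'(K)/R$ lying over $[x]$, so the projection is surjective and the entire problem becomes showing that the rule $[x]\mapsto\{[x_{X'}]\}$ is well defined.

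Second, I would prove independence of the choices. If $v_1,v_2$ are two such valuations, possibly attached to two directly $R$-equivalent representatives $x_0,x_1$, one must check that $[x^{(1)}_{X'}]=[x^{(2)}_{X'}]$ in $X'(K)/R$ for every $X'$. The natural device is to base-change $X'\to X$ along $\Spec A$ (or the henselisation/completion of $A$): the two lifts become $K$-points of the special fibre $f^{-1}(\xi)$, and Theorem \ref{chowstrong} asserts that any two $K$-points of this fibre lying in the \emph{regular locus} of the total space become $R$-equivalent already inside the fibre, hence inside $X'(K)$. For representatives that are directly $R$-equivalent via some $h\colon\P^1\ttto X$, I would run the same fibrewise analysis over the rational curve, exactly as in the reduction used to prove Theorem \ref{c3.5a}, in order to merge their classes level by level.

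The delicate point — and the step I expect to be the main obstacle — is that for a \emph{singular} model $X'$ the fibre $f^{-1}(\xi)$ may lie entirely in the non-regular locus of $X'$ (this already occurs for suitable contractions of a regular threefold), so the chosen lift need not meet the regular locus and Theorem \ref{chowstrong} gives no information about its class on that particular $X'$. This is precisely why the statement is phrased in the inverse limit rather than model by model, and why \ref{chowstrong} is formulated in terms of the regular \emph{locus} rather than assuming the whole total space regular: one cannot appeal to resolution of singularities to make the models regular, since the theorem must hold over an arbitrary field $F$. The way around it is to compare the two families within the limit, pushing each comparison up to a higher model of the system on which a regular-locus representative is available, and then descending the resulting $R$-equivalence; making this limiting argument uniform and genuinely choice-free is the technical heart of the construction, supplied by \cite{gabber}. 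Once $\sigma$ is known to be well defined it is manifestly canonical, so covariance in $K$ (through the maps $X'(K)/R\to X'(K')/R$) and contravariance in $X$ follow formally from the naturality of centres of valuations and of $R$-equivalence.
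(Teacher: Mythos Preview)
The paper does not actually supply a proof of this statement: Theorem \ref{gabber2} is merely stated and attributed to a private communication of Gabber \cite{gabber}, with no argument reproduced anywhere in the text. (Appendix \ref{gabber} contains Gabber's letter, but that letter proves Theorem \ref{chowstrong}, the strong Chow--Murre--van der Waerden connectedness result, not the present theorem.) So there is no ``paper's own proof'' to compare your proposal against.

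That said, your sketch is a plausible outline of how one would expect the argument to go, and you have correctly identified the crux: the step where you invoke Theorem \ref{chowstrong} to prove independence of the chosen valuation requires the two lifts to lie in the regular locus of $X'$, which need not happen for a general proper birational $X'\to X$. You candidly flag this as ``the main obstacle'' and then defer its resolution to \cite{gabber}. In other words, your proposal is not a proof but a reduction of the problem to exactly the hard step whose details the paper also omits. The functoriality claims at the end (covariance in $K$, contravariance in $X$) likewise depend on having a construction that is genuinely canonical, which you have not established since the well-definedness is still outstanding. So as a self-contained proof your proposal has a real gap, but it is the same gap the paper itself leaves open by citing Gabber without reproducing his argument.
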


\subsection{Applications} The following theorem extends part of Corollary \ref{c6.1} to a relative setting: 

\begin{thm} \label{c3.5} 
a) Let $s:Y\to X$ be in $S_b^p$, with $X,Y$ regular. Then the induced map $Y(K)/R\to X(K)/R$ is
bijective for any field extension $K/F$. If $K$ is algebraically closed, the hypothesis ``$Y$ regular'' is not necessary.\\
b) Let $f: Y\ttto Z$ be a rational map with $Y$
regular and $Z$ proper. Then there is an induced map $f_*: Y(K)/R \to Z(K)/R$, which depends
functorially on $K/F$.
\end{thm}

\begin{proof} a) As in the proof of Proposition \ref{p3.4} a),
it suffices to deal with $K=F$. By this proposition, we have to show injectivity.

We assume that $s\in S_b^p$. Let $y_0,y_1\in Y(F)$. Suppose that $s(y_0)$ and $s(y_1)$ are
$R$-equivalent. We want to show that $y_0$ and $y_1$ are then $R$-equivalent. By definition, $s(y_0)$ and $s(y_1)$ are connected by a chain of direct
$R$-equivalences. Applying Proposition \ref{p3.4} a), the intermediate rational points lift
to $Y(F)$. This reduces us to the case where $s(y_0)$ and $s(y_1)$ are
directly $R$-equivalent.

Let $\gamma:\P^1\ttto X$ be a rational map defined at $0$ and $1$ such
that $\gamma(i)=s(y_i)$. Applying Proposition \ref{p3.4} a) with $K=F(t)$, we get that
$\gamma$ lifts to a rational map $\tilde \gamma:\P^1\ttto Y$. Since
$s$ is proper, $\tilde\gamma$ is still defined at $0$ and $1$. Let
$y'_i=\tilde\gamma(i)\in Y(F)$: then $y_i,y'_i\in s^{-1}(s(y_i))$. If $F$ is algebraically
closed, they are $R$-equivalent by Theorem \ref{murre}, thus $y_0$ and $y_1$ are
$R$-equivalent. If $F$ is arbitrary but $Y$ is regular, then we appeal to Theorem
\ref{chowstrong}.

b) By the usual graph trick, as $Z$ is proper, we can
resolve $f$ to get a morphism 
\[\xymatrix{
&\tilde{Y}\ar[dl]_p\ar[dr]^{\tilde{f}}\\
Y && Z
}\]
such that $p$ is a proper birational morphism. By Theorem \ref{gabber2}, the map
$p_*: \tilde{Y}(K)/R \to Y(K)/R$ has a section which is ``natural" in $p$ (\ie when we take a
finer $p$, the two sections are compatible). The statement follows.
\end{proof}

\begin{rk}
Concerning Theorem \ref{chowstrong}, Fakhruddin pointed out that $f^{-1}(s)$ is in
general not strongly linearly connected, while Gabber pointed out that $f^{-1}(s)^\reg(F)$ may
be empty even if $X$ is normal, when $F$ is not algebraically closed. Here is Gabber's example:
in dimension $2$, blow-up the maximal ideal of $A$ and then a non-rational point of the special
fiber, then contract the proper transform of the special fiber. Gabber also gave examples 
covering Fakhruddin's remark: suppose $\dim A=2$ and start from $X_0=$ the blow-up of $\Spec A$
at $s$. Using \cite{ferrand}, one can ``pinch" $X_0$ so as to convert a non-rational closed
point of the special fibre into a rational point. The special fibre of the resulting $X\to
\Spec A$ is then a singular quotient of $\P^1_F$, with two $R$-equivalence classes. He
also gave a normal example \cite{gabber}.
\end{rk}

\section{Examples, applications and open questions}\label{s7}

In this section, we put together some concrete applications of the above results
and list some open questions.

\subsection{Composition of $R$-equivalence classes} As a by-product of Theorem \ref{t6.1}, one gets for three smooth proper varieties $X,Y,Z$ over a field of characteristic $0$ a composition law
\begin{equation}\label{eq9.1}
Y(F(X))/R\times Z(F(Y))/R\to Z(F(X))/R
\end{equation}
which is by no means obvious.  As a corollary, we have:

\begin{cor}\label{c5.4.10}
Let $X$ be a smooth proper variety with function field $K$. Then
$X(K)/R$ has a structure of a monoid with $\eta_X$ as the identity
element.\qed
\end{cor}

\subsection{$R$-equivalence and birational functors} Here is a more concrete reformulation of part of Theorem
\ref{t6.1}:

\begin{cor} Let
\[P:\Sm^\proper(F)\to \sA\] 
be a functor to some category
$\sA$. Suppose that $P$ is a birational functor. Then
$R$-equi\-va\-lence
classes act on $P$: if $X,Y$ are two smooth projective varieties, any class
$x\in X(F(Y))/R$ induces a morphism $x_*:P(Y)\to P(X)$. This assignment is
compatible with the composition of $R$-equivalence classes from
\eqref{eq9.1}.\\
In particular, for two morphisms $f,g:X\to Y$, $P(f)=P(g)$ as soon as
$f(\eta_X)$ and
$g(\eta_X)$ are $R$-equivalent.
\end{cor}

Theorem \ref{t6.1} further says that $R$-equivalence is
``universal" among birational functors.






\subsection{Algebraic groups and $R$-equivalence} 
As a  special case of Corollary \ref{c5.4.10}, we consider a connected algebraic group
$G$ defined over $F$. Recall that for any extension $K/F$, the set $G(K)/R$
is in fact a group. Let $\bar G$ denote a smooth compactification
of $G$ over $F$ (we assume that there is one). It is 
known (P. Gille, \cite{gi}) that the natural map $G(F)/R \to \bar
G(F)/R$ is an isomorphism  if $F$ has characteristic zero and $G$ is
reductive.

Let $K$ denote the function field $F(G)$. By the above corollary, there is
a composition law $\circ$ on $\bar G(K)/R$. On the other hand, the
multiplication
morphism 
$$
m : G \times G \to G
$$
considered as a rational map on $\bar G \times \bar G$ induces a product
map (Theorem \ref{c3.5})
$$
\bar G(K)/R \times \bar  G(K)/R \to \bar G(K)/R
$$
which we denote by $(g,h)\mapsto g\cdot h$; this is clearly compatible
with
the corresponding product map on $G(K)/R$ obtained using the
multiplication homomorphism on $G$. Thus we have two composition laws on
$\bar G(K)/R$.

The following lemma is a formal consequence of Yoneda's lemma:

\begin{lemma}\label{l5.4.11}
Let $g_1, g_2, h \in \bar G(K)/R$. Then we have $(g_1\cdot g_2)\circ
h=(g_1\circ
h)\cdot (g_2\circ h)$.\qed
\end{lemma}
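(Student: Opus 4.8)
The plan is to recognise that both laws on $\bar G(K)/R$ are instances of the categorical structure of $S_b^{-1}\Sm^\proj(F)$ and then to derive the identity from the universal property of the product. First I would fix a \emph{smooth projective} compactification $\bar G$, which is legitimate in characteristic $0$ by Theorem \ref{Hironaka}; then $\bar G$ and $\bar G\times\bar G$ are smooth projective, and by Theorem \ref{t5.4.2} we may compute in $S_b^{-1}\Sm^\proj(F)$, where (with $K=F(\bar G)$)
\[\Hom(\bar G,\bar G)=\bar G(K)/R,\qquad \Hom(\bar G,\bar G\times\bar G)=(\bar G\times\bar G)(K)/R.\]
By the very construction of the composition of $R$-equivalence classes in Lemma \ref{l5.4.8}, the operation $\circ$ is exactly composition of morphisms in this category.

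Next I would translate the two laws into categorical language. The isomorphism \eqref{prodr} is induced by the two projections $p_1,p_2\colon \bar G\times\bar G\to\bar G$, so it exhibits $\bar G\times\bar G$ as the categorical product of $\bar G$ with itself, at least for morphisms out of $\bar G$ (which is all we need): an element of $\Hom(\bar G,\bar G\times\bar G)$ is the pairing $(g_1,g_2)$ of its components $g_i=p_i\circ(g_1,g_2)$. On the other hand the rational multiplication map $\bar m\colon \bar G\times\bar G\ttto\bar G$ defines a single fixed morphism $\mu:=[\bar m]\in\Hom(\bar G\times\bar G,\bar G)$ (via Theorem \ref{c3.5a} b), or the graph trick together with Theorem \ref{Hironaka}), and unwinding the definition of the product law through Corollary \ref{c3.5} and \eqref{prodr} shows that $g_1\cdot g_2=\mu\circ(g_1,g_2)$.

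Granting these two identifications, the proof is a one-line computation. For $h\in\Hom(\bar G,\bar G)$ I would write
\[(g_1\cdot g_2)\circ h=\bigl(\mu\circ(g_1,g_2)\bigr)\circ h=\mu\circ\bigl((g_1,g_2)\circ h\bigr)\]
by associativity, and then observe that the pairing distributes over precomposition: applying $p_i$ gives $p_i\circ\bigl((g_1,g_2)\circ h\bigr)=(p_i\circ(g_1,g_2))\circ h=g_i\circ h$, whence $(g_1,g_2)\circ h=(g_1\circ h,\,g_2\circ h)$ by the universal property of the product. Substituting,
\[(g_1\cdot g_2)\circ h=\mu\circ(g_1\circ h,\,g_2\circ h)=(g_1\circ h)\cdot(g_2\circ h),\]
as desired. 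This is the precise meaning of the appeal to Yoneda's lemma: post-composition with the fixed $\mu$ and the product decomposition \eqref{prodr} are natural in the source, hence commute with the precomposition operation $(-)\circ h$.

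The genuinely substantive points — and the main obstacle — are the two identifications of the second paragraph, namely that $\bar G\times\bar G$ is the categorical product (so that \eqref{prodr} is literally its universal property and the pairing distributes over precomposition) and that the geometrically defined product law $\cdot$ coincides with post-composition by the single morphism $\mu$. Both amount to checking that the operations defined on $R$-equivalence classes agree with the categorical operations in $S_b^{-1}\Sm^\proj(F)$ under Theorem \ref{t5.4.2}; once this compatibility is secured, the distributivity is entirely formal and uses no further geometry.
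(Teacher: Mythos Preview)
Your proof is correct and is precisely the unpacking of what the paper means by ``a formal consequence of Yoneda's lemma'': the paper gives no further argument beyond that phrase and a \qed, and your identification of $\circ$ with composition in $S_b^{-1}\Sm^\proj(F)$, of $\cdot$ with post-composition by the fixed morphism $\mu$ induced by $m$, and of $\bar G\times\bar G$ as the product (via \eqref{prodr} and Theorem~\ref{t5.4.2}) is exactly the intended content. One trivial remark: the paper only assumes $\bar G$ smooth proper, but replacing it by a smooth projective model (or invoking the equivalence $S_b^{-1}\Sm^\proj\simeq S_b^{-1}\Sm^\proper$) is harmless, as you note.
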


In particular, let us take $G=SL_{1,A}$, where $A$ is a central simple
algebra over $F$. It is then known that $G(K)/R\simeq SK_1(A_K)$ for any
function field $K$. If $\car F=0$, we may use Gille's theorem and find
that, for $K=F(G)$, $SK_1(A_K)$ admits a second composition law with unit
element the generic element, which is distributive on the right with
respect to the multiplication law. However, it is not distributive on the left in
general: 

Note that the natural map $\Hom(\Spec F,\bar G)=\bar G(F)/R\to \bar G(K)/R=\Hom(\bar G,\bar G)$
is split injective, a retraction being induced by the unit section $\Spec F\to G\to \bar G$.
Now let
$g\in G(F)$;  for any $\phi\in G(K)= \Rat(G,G)$, we clearly have $[g]\circ
[\phi]= [g]$. In particular, $[g]\circ ([\phi]\cdot [\phi'])\ne ([g]\circ [\phi])\cdot ([g]\circ
[\phi'])$ unless $[g]=1$. (This argument works for any group object in a category with a final
object.)

\subsection{Kan extensions and $\Pi_1$}\label{kan} Let  $\Sm_{**}$ denote the full subcategory of $\Sm$ given by those smooth varieties which admit a smooth proper compactification: then the functor $\theta$ of Corollary \ref{c5.1} induces an equivalence of categories $S_b^{-1}\Sm^\proper\iso S_b^{-1}\Sm_{**}$. Suppose we are given a functor $F:\Sm\to \sC$ whose restriction to $\Sm^\proper$ is birational. We then get an induced functor $\bar F:S_b^{-1} \Sm_{**}\to \sC$ plus a natural transformation
\[\rho_X:F(X)\to \bar F(X)\]
for any $X\in \Sm_{**}$. 

To construct $\bar F$, we set
\[\bar F(X) = \lim_{\bar X} F(\bar X)\]
where the limit is on the category of open immersions $j: X\inj \bar X$ with $\bar X\in \Sm^\proper$: this is an inverse limit of isomorphisms, hence makes sense without any hypothesis on $\sC$ and may be computed by taking any representative $\bar X$. To construct $\rho_X$, an open immersion $j: X\inj \bar X$  as above yields  a map $F(X)\by{F(j)} F(\bar X)\simeq \bar F(X)$, and one checks that this does not depend on the choice of $j$. This is an instance of a \emph{right Kan extension} \cite[Ch. X, \S 3, Th. 1]{maclane}.

We may apply this to $F=\Pi_1$, the fundamental groupoid\footnote{Rather than fundamental group, to avoid the choices of base points.} (here $\sC$ is the category of groupoids): the required property is \cite[Exp. X, Cor. 3.4]{sga1}. As an extra feature, we get that the universal transformation $\rho$ is an epimorphism, because $\Pi_1(U)\surj \Pi_1(X)$  if $U\inj X$ is an open immersion of smooth schemes. Thus, $\Pi_1(X)$ has a ``universal birational quotient'' which is natural in $X$.

As another application, we get that for $X$ smooth and proper, the ``section map'' (subject to a famous conjecture of Grothendieck when $X$ is a curve)
\begin{equation}\label{eq.sect}
X(F) \to \Hom_{\Pi_1(\Spec F)}(\Pi_1(\Spec F),\Pi_1(X))
\end{equation}
factors through $R$-equivalence. On the other hand, if $X$ is projective and $Y$ is a smooth hyperplane section, then $\Pi_1(Y)\iso \Pi_1(X)$ as long as $\dim X>2$ by \cite[Exp. XII, Cor. 3.5]{sga2}; so there are more morphisms to invert if one wishes to study \eqref{eq.sect} for $\dim X>1$ by the present methods.

\subsection{Strongly linearly connected smooth proper varieties}\label{slc}

One natural question that arises is the following: characterise morphisms
$f:X\to Y$ between smooth proper varieties which become invertible in
the category $S_b^{-1}\Sm^\proper$, or equivalently in $S_b^{-1}\Sm$ by Corollary \ref{c5.1}. Here we shall study this question only
in the
simplest case, where $Y=\Spec F$. 

\begin{thm}\label{tlin_con} a) Let $X$ be a smooth proper variety over $F$. Consider the
  following conditions:
\begin{enumerate}
\item $p:X\to \Spec F$ is an isomorphism in $S_b^{-1}\Sm$.
\item $p$ is an isomorphism in $S_r^{-1}\Sm$.
\item For any separable extension $E/F$, $X(E)/R$ has one element (\ie $X$ is strongly linearly connected
according to Definition \ref{dreq-Chow-s}).
\item Same, for $E/F$ of finite type.
\item $X(F)\neq \emptyset$ and $X(K)/R$ has one element for $K=F(X)$.
\item $X(F)\neq \emptyset$ and, given $x_0\in X(F)$, there exists a
  chain of rational curves $(f_i:\P^1_K\to X_K)_{i=1}^n$ such that
  $f_1(0)=\eta_X$, $f_{i+1}(0)=f_i(1)$ and $f_n(1)=x_0$. Here
  $K=F(X)$ and $\eta_X$ is the generic point of $X$.
\item Same as {\rm (6)}, but with $n=1$.
\end{enumerate}
Then $(1)\iff (2)\iff (3)\iff (4)\iff (5)
\iff (6)\Leftarrow (7)$.\\
b) If $\car F=0$, $X$ satisfies Conditions $(1)-(6)$ and is projective, it is rationally
connected.
\end{thm}

\begin{proof} a) (1) $\Rightarrow$ (2) is trivial and the converse
  follows from Theorem \ref{sb=sr}. Thanks to Theorem \ref{t6.1}, (2)
  $\iff$ (4) is an easy consequence of the Yoneda lemma.
The implications $(3)\Rightarrow 
  (4)\Rightarrow (5) \Rightarrow (6)\Leftarrow (7)$ are trivial and (4)
$\Rightarrow$
  (3) is easy by a direct limit argument. To see $(6) \Rightarrow
(1)$, note that by Theorem \ref{t6.1} (6) implies that $1_X=x_0\circ p$ in
$S_b^{-1}\Sm(X,X)$, hence $p$ is an
isomorphism. 

b) This follows from Proposition \ref{r5.1} plus the famous theorem of Koll\'ar-Miyaoka-Mori  
\cite[Th. 3.10]{kollar}, \cite[p. 107, Cor. 4.28]{debarre}.
\end{proof}

\begin{rk}\label{r7.3}  The example of an anisotropic conic shows that, in (5), the
  assumption $X(F)\neq\emptyset$ does not follow from the next one.
\end{rk}

\begin{qn} In the situation of Theorem \ref{tlin_con} b), does $X$ verify condition (7)? We give a partial result in this direction in Proposition \ref{p7.4} below. (The reader may consult the first version of this paper for a non-conclusive attempt to answer this question in general.)
\end{qn}

\subsection{Retract-rational varieties} Recall that, following Saltman, $X$ (smooth but not
necessarily proper) is
\emph{retract-rational} if it contains an open subset $U$ such that $U$ is
a retract of an open subset of $\A^n$. When $F$ is infinite, this includes
the case where there exists $Y$ such that $X\times Y$ is rational, as in
\cite[Ex. A. pp.  222/223]{ct-s}.

We have a similar notion for function fields:

\begin{defn} A function field $K/F$ is \emph{retract-rational} if there
exists an integer $n\ge 0$ and two places $\lambda:K\tto
F(t_1,\dots,t_n)$, $\mu:F(t_1,\dots,t_n)\tto K$ such that
$\mu\lambda=1_K$.
\end{defn}

Note that this forces $\lambda$ to be a trivial place (\ie an inclusion of
fields). Using Lemma \ref{l6}, we easily see that $X$ is retract-rational
if and only if $F(X)$ is retract-rational.

\begin{prop}\label{p7.4} If $X$ is a retract-rational smooth variety, then
$X\iso \Spec F$ in $S_b^{-1}\Sm$. If moreover $X$ is proper and
$F$ is infinite, then $X$ verifies Condition {\rm (7)} of Theorem
\ref{tlin_con} for a Zariski dense set of points $x_0$.
\end{prop}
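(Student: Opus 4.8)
The plan is to handle the two assertions separately, since the first is purely a statement about places and the second brings in $R$-equivalence and rational curves.

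First I would prove that $F(X)\simeq F$ in $S_r^{-1}\place(F)^\op$. By the definition of retract-rationality, there are places $\lambda:K\tto F(t_1,\dots,t_n)$ and $\mu:F(t_1,\dots,t_n)\tto K$ with $\mu\lambda=1_K$, where $K=F(X)$; as noted just before the proposition, $\lambda$ is forced to be a trivial place, i.e.\ a field inclusion $K\inj F(t_1,\dots,t_n)$ exhibiting $F(t_1,\dots,t_n)/K$ as purely transcendental. Hence $\lambda\in S_r$, so $\lambda$ becomes invertible in $S_r^{-1}\place(F)$. Working in $\place(F)^\op$ (so arrows reverse), the relation $\mu\lambda = 1_K$ together with the invertibility of $\lambda$ forces $\mu$ to be the inverse of $\lambda$ in $S_r^{-1}\place(F)^\op$; in particular $\mu$ is also invertible there. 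Since $F(t_1,\dots,t_n)/F$ is itself purely transcendental, the structural inclusion $F\inj F(t_1,\dots,t_n)$ lies in $S_r$, so $F\simeq F(t_1,\dots,t_n)$ in $S_r^{-1}\place(F)^\op$. Composing, $F(X)=K\simeq F(t_1,\dots,t_n)\simeq F$ in $S_r^{-1}\place(F)^\op$, as claimed.

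Next I would treat the second assertion, assuming $X$ proper and $F$ infinite, aiming at Condition (7) of Theorem \ref{tlin_con} for a Zariski-dense set of points $x_0$. Condition (7) asks, for $K=F(X)$, for a single rational curve $f:\P^1_K\to X_K$ joining the generic point $\eta_X$ to a point $x_0\in X(F)$. The idea is to unwind the retract-rational structure geometrically: retract-rationality gives an open $U\subseteq X$ which is a retract of an open $W\subseteq \A^n$, i.e.\ morphisms $i:U\to W$ and $r:W\to U$ with $ri=1_U$. A general point $x_0\in U(F)$ (these form a Zariski-dense set since $F$ is infinite) maps to $i(x_0)\in W(F)\subseteq \A^n(F)$, and in affine space one can join $i(x_0)$ to any second point by a line; applying $r$ transports such a line into a rational curve on $U$, hence on $X$. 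The point is to arrange the two endpoints to be $x_0$ and the generic point $\eta_X\in X(K)=U(K)$: viewing $\eta_X$ as $r(i(\eta_X))$ with $i(\eta_X)\in W(K)$, one draws the affine line in $\A^n_K$ from $i(x_0)$ (a constant point) to $i(\eta_X)$, composes with $r$, and obtains $f:\P^1_K\ttto X_K$ with $f(0)=x_0$ and $f(1)=\eta_X$ (after relabelling). Since $X$ is proper this rational map is a morphism $\P^1_K\to X_K$, giving Condition (7).

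The main obstacle will be the second part: one must ensure that the line in $\A^n_K$ stays inside the open retract $W$ at least near its two endpoints (so that $r$ is defined there), and that the resulting curve genuinely lands the generic point $\eta_X$ at one end. The first issue is handled because $f$ only needs to be defined at $0$ and $1$ in Condition (7), and properness of $X$ then extends it everywhere; the density of admissible $x_0$ comes from choosing $x_0$ in the dense open $U(F)$ and avoiding the closed locus where the line escapes $W$ before reaching its endpoints. Verifying carefully that the endpoint $f(1)$ is exactly $\eta_X$ (rather than some other $K$-point) is the delicate bookkeeping step, but it follows from the retract identity $ri=1_U$ applied on the generic point.
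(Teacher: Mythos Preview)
Your argument for the second assertion is essentially the paper's: take the retract $U\overset{i}{\to} W\overset{r}{\to} U$ with $W$ open in $\A^n$, pick $x_0\in U(F)$, draw the straight line in $\A^n_K$ from $i(x_0)$ to $i(\eta_X)$, and push it back through $r$; properness of $X$ extends the resulting rational map to a morphism $\P^1_K\to X_K$. That part is fine.

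The gap is in your proof of the first assertion. You assert that the trivial place $\lambda:K\hookrightarrow F(t_1,\dots,t_n)$ exhibits $F(t_1,\dots,t_n)/K$ as a \emph{purely transcendental} extension, and hence that $\lambda\in S_r$. Nothing in the definition of retract-rationality guarantees this: all you know is that $\lambda$ is a field inclusion and that it admits a one-sided inverse place $\mu$. Geometrically, $\lambda$ is induced by the dominant map $r:W\to U$, and there is no reason its generic fibre should be rational; indeed, if your claim held then every retract-rational variety would be stably rational, which is not what the definition provides. So your invertibility argument for $\lambda$ collapses.

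The correct route is the one the paper calls ``obvious by Yoneda'': the relation $\mu\lambda=1_K$ exhibits $K$ as a retract of $F(t_1,\dots,t_n)$ in $\place(F)$, hence in $S_r^{-1}\place(F)$. Since the inclusion $F\hookrightarrow F(t_1,\dots,t_n)$ \emph{is} in $S_r$, we have $F(t_1,\dots,t_n)\simeq F$ there, so $K$ is a retract of $F$. But $F$ is initial in $\place(F)$ (the only place $F\tto L$ is the structural inclusion), and it remains initial after localisation by Proposition~\ref{leq1}; any retract of an initial object is initial, hence $K\simeq F$. You never need $\lambda$ itself to lie in $S_r$.
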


\begin{proof} The first statement is obvious by Yoneda's lemma. Let us prove the second: 
by hypothesis, there exist open subsets $U\subseteq
X$ and $V\subseteq \A^n$ and morphisms $f:U\to V$ and $g:V\to U$ such that
$gf=1_U$. This already shows that $U(F)$ is Zariski-dense in $X$. Let now $x_0\in U(F)$, and
let $K=F(X)$. Consider the straight line $\gamma:\A^1_K\to \A^n_K$ such that $\gamma(0)=f(x_0)$
and $\gamma(1)=f(\eta_X)$. Then $g\circ\gamma$ links $x_0$ to $\eta_X$, as desired.
\end{proof}

\begin{cor}\label{c7.4} We have the following
implications for a smooth prop\-er variety $X$ over a field $F$ of 
characteristic $0$: retract-rational
$\Rightarrow$ strongly linearly connected $\Rightarrow$ rationally
connected.
\end{cor}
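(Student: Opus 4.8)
The plan is to chain together the three preceding results, reducing each implication to something already established: Proposition~\ref{p7.4} for the first arrow, and Theorem~\ref{tlin_con} together with Proposition~\ref{prat_curve} for the second. I would begin with ``retract-rational $\Rightarrow$ strongly linearly connected''. Since $X$ is projective, hence proper, and $F$ is infinite, Proposition~\ref{p7.4} applies and shows that a retract-rational $X$ satisfies Condition~(7) of Theorem~\ref{tlin_con} for a Zariski-dense set of points $x_0$, in particular for at least one $x_0$. Condition~(7) implies Condition~(6) for free, and it remains to reach Condition~(3), which is precisely strong linear connectedness. In characteristic $0$ this is part of Theorem~\ref{tlin_con} (where ``proper'' already suffices); in arbitrary characteristic I would instead invoke Remark~\ref{r7.3}~b), where the implication $(6)\Rightarrow(3)$ is proved for projective $X$ by means of Madore's theorem. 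Either way $X$ comes out strongly linearly connected.

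For ``strongly linearly connected $\Rightarrow$ separably rationally connected'' I would split on the characteristic. In characteristic $0$ the argument is immediate: strong linear connectedness is one of the equivalent Conditions $(1)$--$(6)$ of Theorem~\ref{tlin_con}, so part~b) of that theorem shows the projective $X$ is rationally connected, and in characteristic $0$ rational connectedness coincides with separable rational connectedness. In positive characteristic I would aim to produce a very free rational curve, which certifies separable rational connectedness of $X_{\bar F}$, hence of $X$, this property being geometric and stable under extension of the base. Here Proposition~\ref{prat_curve} is the tool: the generic point $\eta_X$ is general, so any rational curve $f\colon\P^1\to X$ through $\eta_X$ satisfies Conditions~$(1)$--$(4)$ there and is free, and if $f$ can in addition be taken through a rational point, part~c) (applied over a perfect base, reached by first base-changing to $\bar F$) forces $f$ to be very free.

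The crux---and the main obstacle---is this last clause in positive characteristic. Strong linear connectedness supplies, through Condition~(6), only a \emph{chain} of rational curves joining $\eta_X$ to a rational point $x_0$, whereas Proposition~\ref{prat_curve}~c) demands a \emph{single} curve meeting both a general and a rational point, i.e. Condition~(7). Passing from~(6) to~(7) is the smoothing of rational chains recorded in Remark~\ref{r7.3}~c), but that input rests on Theorem~\ref{tlin_con}~b) and so settles only the characteristic-$0$ case; in positive characteristic this reduction is genuinely delicate and is where the real work would lie, presumably via a direct smoothing argument over the perfect base $\bar F$ combined with the freeness analysis of Proposition~\ref{prat_curve}. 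The remaining points---that the perfect-base hypothesis of Proposition~\ref{prat_curve}~c) is harmless after base change, and that a very free curve over the enlarged base still witnesses separable rational connectedness of the original $X$---are routine descent checks.
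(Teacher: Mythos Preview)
Your argument for the first implication is correct and coincides with the paper's: Proposition~\ref{p7.4} yields Condition~(7), hence~(6), and then Theorem~\ref{tlin_con} in characteristic~$0$, or Remark~\ref{r7.3}~b) for projective $X$ in arbitrary characteristic, upgrades this to Condition~(3), which is strong linear connectedness.

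For the second implication you depart from the paper and make life harder than necessary. The paper's proof is a single clause: ``the second implication follows from Koll\'ar's work \cite{kollar}.'' It does not attempt to derive separable rational connectedness from the paper's own machinery. Your characteristic-$0$ route via Theorem~\ref{tlin_con}~b) is fine and amounts to the same thing, since that theorem's proof is itself just an appeal to the Koll\'ar--Debarre smoothing results. But in positive characteristic you try to build a very free curve from Proposition~\ref{prat_curve}~c), which requires a \emph{single} curve through both $\eta_X$ and a rational point, i.e.\ Condition~(7). Strong linear connectedness only hands you Condition~(6), a chain, and you correctly identify that you cannot close this gap with the tools at hand. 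The paper sidesteps the issue entirely by treating the implication as an external input from \cite{kollar} rather than something to be reproved; your obstacle is therefore an artifact of the route you chose, not a defect in the corollary.
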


\begin{proof} The first implication follows from  Theorem \ref{tlin_con} and Proposition
\ref{p7.4}; the second implication follows from the theorem of Koll\'ar-Miyaoka-Mori already quoted.
\end{proof}

\begin{rk} In characteristic $0$, if $X$ is a smooth compactification of a torus, then it
verifies Conditions {\rm (1) -- (6)} of Theorem \ref{tlin_con} if and only if it is
retract-rational, by \cite[Prop. 7.4]{ct-s2} (\ie the   first implication in the previous
corollary is an equivalence for such $X$). This may also be true by
replacing ``torus" by ``connected reductive group": at least it is so in many special cases,
see \cite[Th. 7.2 and Cor. 5.10]{gille-kneser}.

\end{rk}





\subsection{$S_r$-local objects} Recall:

\begin{defn}\label{d1.3} Let $\sC$ be a category and $S$ a family of
  morphisms of $\sC$. An object $X\in\sC$ is 
\emph{local} relatively to $S$ or \emph{$S$-local} (\emph{left closed}
in the terminology of 
\cite[Ch. 1, Def. 4.1 p. 19]{gz}) if, for any
$s:Y\to Z$ in
$S$, the map
\[\sC(Z,X)\overset{s^*}{\to}\sC(Y,X)\]
is bijective.
\end{defn}


 In this rather disappointing subsection,
we show that there are not enough of these objects. They are the exact
opposite of rationally connected varieties.

\begin{defn} A proper $F$-variety $X$ is \emph{nonrational} if
it does not carry any nonconstant rational curve (over the algebraic
closure of $F$), or equivalently if the map
\[X(\bar F)\to X(\bar F(t))\]
is bijective.
\end{defn}

\begin{lemma} a) Nonrationality is stable by product and by passing to
closed subvarieties.\\
b) Curves of genus $>0$ and torsors under abelian varieties are
nonrational.\\
c) Nonrational smooth projective varieties are minimal in the sense that
their canonical bundle is nef.
\end{lemma}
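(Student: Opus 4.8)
The plan is to prove the three parts of the lemma separately, as they are largely independent. Throughout, ``nonrational'' means carrying no nonconstant rational curve over $\bar F$, equivalently that $X(\bar F)\to X(\bar F(t))$ is bijective.

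For part a), the stability properties follow by unwinding the definition in terms of rational curves. For closed subvarieties, the key observation is that a nonconstant rational curve $f:\P^1_{\bar F}\to Z$ with $Z\subseteq X$ closed is in particular a nonconstant rational curve in $X$; hence if $X$ carries none, neither does $Z$. For products, suppose $X$ and $Y$ are nonrational and $f:\P^1_{\bar F}\to (X\times Y)_{\bar F}$ is a nonconstant rational curve. Composing with the two projections gives rational curves $p_1\circ f:\P^1\to X_{\bar F}$ and $p_2\circ f:\P^1\to Y_{\bar F}$. By nonrationality of $X$ and $Y$, both are constant, so $f$ itself is constant, a contradiction. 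I would phrase this cleanly using the bijectivity formulation: $(X\times Y)(\bar F)\to (X\times Y)(\bar F(t))$ factors as the product of the corresponding maps for $X$ and $Y$, each of which is bijective.

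For part b), curves of genus $>0$ contain no nonconstant rational curve because a nonconstant map $\P^1\to C$ would violate the Riemann--Hurwitz genus inequality (a dominant map to a smooth curve cannot decrease genus). For torsors under abelian varieties: after base change to $\bar F$ a torsor acquires a rational point, hence becomes isomorphic to the abelian variety $A$ itself, so it suffices to show $A$ carries no nonconstant rational curve. A morphism $\P^1\to A$ is constant since any rational map from a smooth variety to an abelian variety is a morphism and, by rigidity, any morphism $\P^1\to A$ factors through a point (equivalently, $\P^1$ has trivial Albanese, or $\mathrm{Hom}(\P^1,A)$ is trivial as $\Omega^1_{\P^1}$ admits no nonzero global sections while pulling back an invariant differential would produce one).

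For part c), I would argue by contraposition: if the canonical bundle $K_X$ is not nef, then by the Cone Theorem of Mori there exists an extremal ray generated by the class of a rational curve, so $X$ carries a nonconstant rational curve and is therefore rational in our sense. This is the step I expect to be the main obstacle, as it relies on Mori theory and the precise hypotheses (projectivity, smoothness) under which the Cone Theorem applies; one must be careful that the lemma as stated speaks of proper rather than projective varieties, so I would either invoke the projective case and reduce via Chow's lemma (Theorem \ref{Chow}), or simply note that the intended application is to projective $X$. The cleanest formulation is: $K_X$ nef means $K_X\cdot C\ge 0$ for every curve $C$; a nonrational $X$ contains no rational curve, and the curves produced by Mori's theorem when $K_X$ fails to be nef are precisely rational curves, whence the claim.
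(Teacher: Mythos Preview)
Your proof is correct and, for parts a) and b), simply spells out in detail what the paper dismisses as ``obvious''. For part c) you invoke the Cone Theorem, whereas the paper cites the Miyaoka--Mori theorem directly (the numerical criterion for uniruledness, \cite{mori}, \cite[Th.~1.13]{km}, \cite[Th.~3.6]{debarre}): if $K_X$ is not nef there is a curve $C$ with $K_X\cdot C<0$, and bend-and-break produces a rational curve. Both routes lie in the same circle of Mori-theoretic ideas and yield the conclusion equally well; the Miyaoka--Mori citation is slightly more elementary in that it does not require the full structure of extremal rays.

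One small point: your worry about proper versus projective is unnecessary, since the lemma as stated already assumes $X$ is projective in part c), so no reduction via Chow's lemma is needed.
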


\prf a) and b) are obvious; c) follows from the Miyaoka-Mori
theorem (\cite{mori}, see also \cite[Th. 1.13]{km} or \cite[Th.
3.6]{debarre}).\qed

On the other hand, an anisotropic conic is not a nonrational variety. This is also true for some minimal models in dimension 2, even when $F$ is algebraically closed.

Smooth nonrational varieties are the local objects of
$\Sm$ with respect to $S_r$ in the sense of Definition
\ref{d1.3}: 

\begin{lemma} a) A proper variety $X$ is nonrational if and only if, for
any morphism $f:Y\to Z$ between smooth varieties such that $f\in S_r$, the
map
\[f^*:Map(Z,X)\to Map(Y,X)\]
is bijective.\\
b) A smooth proper nonrational variety $X$ is stably minimal in the
following sense: any morphism in $S_r$ with source $X$ is an isomorphism.
\end{lemma}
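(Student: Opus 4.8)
The plan is to prove a) first and then deduce b) from it. The key geometric input is that a proper nonrational $X$ absorbs purely transcendental extensions at the level of rational points: \emph{for any field $L\supseteq F$ and any purely transcendental $M/L$, the map $X(L)\to X(M)$ is bijective.} Injectivity is clear since $L\hookrightarrow M$ is dominant and $X$ is separated. For surjectivity it suffices (by induction on transcendence degree) to treat $M=L(t)$: an element of $X(L(t))$ is a rational map $\P^1_L\ttto X_L$, hence a morphism $\phi:\P^1_L\to X_L$ because $X$ is proper, and after base change to $\bar L$ the morphism $\phi_{\bar L}$ is constant because $X_{\bar L}$ carries no nonconstant rational curve. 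That nonrationality is preserved under algebraically closed extension is the usual spreading-out argument, exactly as in the proof of Lemma \ref{l5.2.2} and Remark \ref{r5.1}. A morphism $\P^1_L\to X_L$ that is constant over $\bar L$ factors through an $L$-point, providing the required preimage in $X(L)$.

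Granting this, fix $f:Y\to Z$ in $S_r$ with $Y,Z$ smooth and consider $f^*:\Map(Z,X)\to\Map(Y,X)$. Injectivity is immediate: $f$ is dominant, so two morphisms $Z\to X$ agreeing after composition with $f$ already agree at $\eta_Z=f(\eta_Y)$ and on residue fields, hence coincide by Lemma \ref{l1}. For surjectivity, let $h:Y\to X$. Since $F(Y)/F(Z)$ is purely transcendental, the sub-lemma identifies $h(\eta_Y)\in X(F(Y))$ with a point of $X(F(Z))$, i.e. (Lemma \ref{l2.1}) with a rational map $g_0:Z\ttto X$ such that $g_0\circ f=h$ on a dense open subset of $Y$ (again by Lemma \ref{l1}).

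The heart of the matter, and the step I expect to be the main obstacle, is to show that $g_0$ is actually a morphism, i.e. that its fundamental set $\Exc(g_0)$ is empty. Since $Z$ is smooth and $X$ proper, $\Exc(g_0)$ has codimension $\ge 2$ (rational maps from a smooth variety to a proper variety are defined in codimension one); resolve $g_0$ by the graph trick into $p:\bar\Gamma\to Z$ proper birational, an isomorphism over $Z\setminus\Exc(g_0)$, and $q:\bar\Gamma\to X$, so that $(p,q):\bar\Gamma\hookrightarrow Z\times X$ is a closed immersion. It suffices to prove that $p$ is quasi-finite, for then $p$ is finite birational onto the normal variety $Z$, hence an isomorphism, and $g_0=q\circ p^{-1}$ is a morphism. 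Quasi-finiteness may be checked after base change to $\bar F$; as $Z$ is smooth, $Z_{\bar F}$ is a disjoint union of smooth $\bar F$-varieties and $\bar\Gamma_{\bar F}$ decomposes accordingly, reducing us to a proper birational morphism of $\bar F$-varieties onto a smooth $\bar F$-variety with all points rational. A positive-dimensional fibre over such a point is linearly connected by the Murre--Chow--van der Waerden theorem \ref{murre} (using Chow's lemma \ref{Chow} to reduce to the projective case), hence contains a nonconstant rational curve; but $X_{\bar F}$ is nonrational, so $q$ contracts every such curve and is therefore constant on the whole fibre. Since $(p,q)$ is injective, the fibre must be a single point, a contradiction. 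Thus all fibres of $p$ are finite, $p$ is an isomorphism, $g_0$ is a morphism $g$, and $g\circ f=h$ on all of $Y$ by Lemma \ref{l1}; this proves surjectivity.

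For the converse in a), suppose $f^*$ is bijective for every such $f$ but $X$ is not nonrational. A nonconstant rational curve on $X_{\bar F}$ spreads out to a smooth integral $F$-variety $Z$ and an $F$-morphism $ev:Z\times\P^1\to X$ whose restriction to the generic fibre is nonconstant, so $ev$ does not factor through the projection $pr:Z\times\P^1\to Z$. But $pr\in S_r$ and both varieties are smooth, so bijectivity of $pr^*$ forces $ev=g\circ pr$ for some $g:Z\to X$, a contradiction. Finally, for b), let $s:X\to W$ be a morphism in $S_r$ in the ambient category of smooth proper varieties, so that $W$ is smooth (this smoothness is essential: the normalisation of a variety obtained by pinching two points of a smooth proper nonrational curve is a morphism in $S_r$ that is not an isomorphism). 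Applying a) to $f=s$, with the nonrational \emph{target} $X$, shows $s^*:\Map(W,X)\to\Map(X,X)$ is bijective, so $\mathrm{id}_X$ has a preimage $g$ with $g\circ s=\mathrm{id}_X$. Then $(s\circ g)\circ s=s=\mathrm{id}_W\circ s$, and since $s$ is dominant, Lemma \ref{l1} gives $s\circ g=\mathrm{id}_W$; hence $s$ is an isomorphism.
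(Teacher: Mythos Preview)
Your proof is correct and takes a route that differs from the paper's in one substantive respect. For the hard implication in a) (nonrational $\Rightarrow$ $f^*$ bijective), the paper reduces to $F$ algebraically closed, takes a common open $U$ of $Y$ and $Z\times\P^n$, and invokes the standard extension theorem \cite[Cor.~1.5]{km}, \cite[Cor.~1.44]{debarre} (a rational map from a smooth variety to a proper variety carrying no rational curves is everywhere defined) to extend $\psi|_U$ to a morphism $\phi:Z\times\P^n\to X$; then $\phi$ is constant on each fibre $\{z\}\times\P^n$, checked line by line, and so factors through the projection to $Z$. You instead first descend directly to a rational map $g_0:Z\ttto X$ via your sub-lemma $X(L)\simeq X(M)$, and then essentially \emph{reprove} the cited extension result for $g_0$ using the graph closure and Murre's connectedness theorem (Theorem~\ref{murre}), which the paper already has on hand. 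So your argument is longer but more self-contained within the paper, trading an external citation for an internal one; the underlying geometric mechanism---rational curves appearing in exceptional fibres of a birational morphism onto a smooth base---is the same in both proofs. Your converse direction via spreading out and your treatment of b), including the observation that the target must be smooth for the conclusion to hold, spell out what the paper dispatches in the one-liners ``take $f:\P^1\to\Spec F$'' and ``immediately follows from a)''.
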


\prf a) Necessity is clear (take $f:\P^1\to \Spec F$). For sufficiency,
$f^*$ is clearly injective since $f$ is dominant, and we have to show
surjectivity. We may assume $F$ algebraically closed. Let
$U$ be a common open subset to $Y$ and $Z\times\P^n$ for suitable $n$. Let $\psi:Y\to X$. By
\cite[Cor. 1.5]{km} or \cite[Cor. 1.44]{debarre}, $\psi_{|U}$ extends to a
morphism
$\phi$ on $Z\times\P^n$. But for any closed point $z\in Z$,
$\phi(\{z\}\times\P^1)$ is a point, where
$\P^1$ is any line of $\P^n$. Therefore $\phi(\{z\}\times\P^n)$ is a point,
which implies that $\phi$ factors through the first projection.

b) immediately follows from a).\qed

\begin{lemma} If $X$ is nonrational, it remains nonrational over any
extension $K/F$.
\end{lemma}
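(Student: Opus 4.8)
The plan is to reduce the statement to a purely geometric assertion over algebraically closed fields, and then to settle it by a spreading-out and specialisation argument. Writing $X_K=X\times_F K$, the variety $X_K$ is nonrational precisely when it carries no nonconstant rational curve over $\overline K$. Fix an embedding $\bar F\hookrightarrow\overline K$ of an algebraic closure of $F$ into one of $K$; then $X_K\times_K\overline K=X\times_F\overline K=(X\times_F\bar F)\times_{\bar F}\overline K$. Hence it suffices to prove the following: if $Y$ is a proper variety over an algebraically closed field $k$ (to be applied with $k=\bar F$) that carries no nonconstant rational curve, and $\Omega\supseteq k$ (to be applied with $\Omega=\overline K$) is a larger algebraically closed field, then $Y_\Omega$ carries no nonconstant rational curve either. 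In other words, nonrationality is insensitive to enlarging the algebraically closed base field.

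First I would argue by contradiction: suppose $g\:\P^1_\Omega\to Y_\Omega$ is a nonconstant morphism. Since a morphism out of $\P^1$ is nonconstant exactly when its image is not a single point, I may choose $p,q\in\P^1(\Omega)$ with $g(p)\neq g(q)$. All of this data involves only finitely many elements of $\Omega$, so by a standard limit argument there is a finitely generated $k$-subalgebra $A\subseteq\Omega$, necessarily a domain, such that, setting $T=\Spec A$, the morphism $g$ descends to a $T$-morphism $g_A\:\P^1_A\to Y_A:=Y\times_k T$ and the points $p,q$ descend to sections in $\P^1(A)$.

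Next I would specialise. The two sections $s_p=g_A\circ p$ and $s_q=g_A\circ q$ of $Y_A\to T$ differ over the generic point of $T$, because $g(p)\neq g(q)$ holds over $\Omega\supseteq k(T)$; since $Y$, hence $Y_A\to T$, is separated, their equaliser is a closed subscheme $Z\subsetneq T$, so $U:=T\setminus Z$ is a dense open $k$-subvariety. As $k$ is algebraically closed and $U$ is nonempty of finite type, there is a point $t_0\in U(k)$. Specialising at $t_0$ yields a morphism $g_{t_0}\:\P^1_k\to Y$ together with $p_{t_0},q_{t_0}\in\P^1(k)$ such that $g_{t_0}(p_{t_0})=s_p(t_0)\neq s_q(t_0)=g_{t_0}(q_{t_0})$. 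Thus $g_{t_0}$ is a nonconstant rational curve on $Y=X\times_F\bar F$, contradicting the nonrationality of $X$ over $F$ and finishing the reduction.

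The only delicate points are the two descent and specialisation steps. Descending $g$ and the points $p,q$ to a finitely generated subalgebra $A$ is routine from the finite presentation of $\P^1_k$ and $Y$. The hard part will be guaranteeing that the specialised curve $g_{t_0}$ stays nonconstant; this is exactly what the separatedness of $X$ buys us, since it forces the locus where $s_p$ and $s_q$ agree to be a \emph{proper} closed subset of $T$, over whose complement the specialised curve automatically separates $p_{t_0}$ from $q_{t_0}$. Keeping nonconstancy alive under specialisation is the one thing I expect to need genuine care.
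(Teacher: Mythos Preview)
Your proof is correct and follows essentially the same spreading-out/specialisation strategy as the paper: reduce to an algebraically closed base, spread a putative rational curve over a finite-type base $T$, and specialise to a closed point. The only cosmetic difference is in the endgame: you argue by contradiction, tracking two points $p,q$ with distinct images and using separatedness to find a closed $t_0$ where they still differ; the paper instead observes directly that every closed fibre $f_u\colon\P^1\to X$ is constant (by nonrationality over $\bar F$) and concludes that the spread morphism $U\times\P^1\to X$ factors through the first projection, forcing the generic fibre to be constant as well. These are contrapositives of the same argument, and neither buys anything substantial over the other.
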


\prf It is a variant of the previous one: we may assume that $F$ is
algebraically closed and that
$K/F$ is finitely generated. Let $f:\P^1_K\to X_K$. Spread $f$ to a
$U$-morphism 
$\tilde f:U\times \P^1\to U\times X$ and compose with the second
projection. Any closed point $u\in U$ defines a map $f_u:\P^1\to X$,
which is constant, hence $p_2\circ\tilde f$ factors through the first
projection, which implies that $f$ is constant.\qed

\subsection{Open questions}\label{s7.7} We finish by listing a few problems that are not
answered in this paper.

\begin{enumerate}
\item Compute Hom sets in $S_b^{-1}\Var$. In \cite[Rk. 8.11]{loc}, it is shown that the functor $S_b^{-1}\Sm\to S_b^{-1}\Var$ is neither full nor faithful and that the Hom sets are in fact completely different.
\item Compute Hom sets in $(S_b^p)^{-1}\Sm$.
\item Let $d_{\le n}\Sm$ be the full subcategory of $\Sm$ consisting of smooth varieties of dimension $\le n$. Is the induced functor $S_b^{-1}d_{\le n} \Sm\to S_b^{-1} \Sm$ fully faithful?
\item Give a categorical interpretation of rationally connected varieties.
\item Finally one should develop additional functoriality: products and internal Homs, change of base field.
\end{enumerate}

\appendix

\section{Invariance birationnelle et invariance ho\-mo\-to\-pi\-que} \label{colliot}

\hfill par Jean-Louis Colliot-Th\'el\`ene

\hfill 14 septembre 2006.

\bigskip

\noindent{\it Soit $k$ un corps. Soit $F$ un foncteur contravariant de la cat\'egorie des
$k$-sch\'emas vers la cat\'egorie des ensembles.  Si sur les morphismes $k$-birationnels de
surfaces projectives, lisses   et g\'eom\'etri\-quement connexes
 ce foncteur induit   des bijections, alors l'application $F(k) \to F(\P^1_{k})$ est une
bijection.}
 
\bigskip
 
 \noindent{\it D\'emonstration}.   Toutes les vari\'et\'es consid\'er\'ees sont des $k$-vari\'et\'es. On \'ecrit
 $F(k)$ pour $F({\rm Spec}(k))$.
 Soit $W$ l'\'eclat\'e de $\P^1\times \P^1$ en un $k$-point $M$. Les transform\'es propres des deux g\'en\'eratrices $L_{1}$ et $L_{2}$ passant par $M$ sont deux courbes  exceptionnelles  de premi\`ere esp\`ece  $E_{1}\simeq \P^1$ et $E_{2}\simeq \P^1$ qui ne se rencontrent pas. On peut donc les contracter simultan\'ement, la surface que l'on obtient est le plan projectif $\P^2$. Notons $M_{1}$ et $M_{2}$ les $k$-points de  $\P^2$ sur lesquels les courbes $E_{1}$ et $E_{2}$ se contractent.
  
  On r\'ealise facilement cette
  construction de mani\`ere concr\`ete.  Dans  $ \P^1 \times \P^1 \times \P^2$ avec coordonn\'ees multihomog\`enes
 $(u,v; w,z;X,Y,T)$  on prend pour $W$ la surface d\'efinie par l'id\'eal  $(uT-vX, wT-zY)$, 
 et on consid\`ere les deux projections $W \to \P^1\times \P^1$ et $W \to \P^2$.

  On a  un diagramme commutatif de morphismes 
 \[\begin{CD} E_{1} @>>> W     \\
 @V{\wr}VV @V{}VV \\
L_1 @>>> \P^1 \times \P^1.
\end{CD}\]

Le compos\'e de l'inclusion $L_{1} \inj \P^1 \times \P^1$ et d'une des deux projections
$\P^1 \times \P^1 \to \P^1$ est un isomorphisme. Par fonctorialit\'e, la restriction $F(\P^1\times \P^1) \to F(L_{1})$
est donc surjective.  Par fonctorialit\'e, le diagramme ci-dessus implique alors que la restriction $F(W) \to F(E_{1})$ est surjective.
 
 Consid\'erons maintenant la projection $W \to \P^2$.
 On a ici le  diagramme commutatif de morphismes
\[\begin{CD}
E_{1} @>>> W     \\
@V{}VV @V{}VV \\
M_{1} @>>> \P^2. 
\end{CD}\]

Par l'hypoth\`ese d'invariance birationnelle, on a la bijection  $F(\P^2) \allowbreak\iso  F(W)$. Donc  la fl\`eche compos\'ee $F(\P^2) \to F(W) \to F(E_{1})$
 est surjective. Mais par le diagramme commutatif ci-dessus la fl\`eche compos\'ee se factorise aussi comme
  $F(\P^2) \to  F(M_{1}) \to F(E_{1})$. Ainsi $F(M_{1}) \to F(E_{1})$, c'est-\`a-dire
 $F(k) \to  F(\P^1)$, est surjectif.
 L'injectivit\'e de $F(k) \to F(\P^1)$ r\'esulte de la fonctorialit\'e et de la consid\'eration d'un
 $k$-point sur $\P^1$.


\section{A letter from O. Gabber}\label{gabber}

\hfill June 12, 2007
\medskip

Dear Kahn,
\medskip

I discuss a proof of

\begin{thm} Let $A$ be a regular local ring with residue field
$k$, $X'\to X=Spec(A)$ a proper birational morphism, $X'_{reg}$ the regular
locus
of $X'$, $X'_s$ the special fiber of $X'$, $X'_{reg,s}= X'_s \cap X'_{reg}$,
which
is known to be open in $X'_s$, $F$ a field extension of $k$, then any two
points of
$X'_{reg,s}(F)$ are $R$-equivalent in $X'_s(F)$.
\end{thm}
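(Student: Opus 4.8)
I would argue by induction on $d=\dim A$, reducing the \emph{strong} $R$-triviality statement to the geometric rational chain connectedness supplied by Chow's Theorem \ref{chow}, and using the regularity of the two chosen points to descend from $\bar F$ down to the given field $F$. It is enough to treat two points $x_0,x_1\in X'_{\reg,s}(F)$ and show they are $R$-equivalent in $X'_s(F)$.

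First the formal reductions and the base cases. Since $A$ is regular it is a unique factorisation domain, so $X=\Spec A$ is factorial and the proper birational morphism $f\colon X'\to X$ is an isomorphism in codimension $1$; hence its non-isomorphism locus $Z\subset X$ has $\codim Z\ge 2$. For $d\le 1$ this forces $Z=\emptyset$, $f$ is an isomorphism and $X'_s$ is a single point, so there is nothing to prove. In general I would apply Chow's lemma (Theorem \ref{Chow}) to $f$ to obtain a projective birational $g\colon X''\to X'$ with $X''\to\Spec A$ projective; then $X''_s=g^{-1}(X'_s)$ surjects properly onto $X'_s$. By Theorem \ref{chow}, $X''_s$ is linearly connected, and pushing chains of rational curves forward along $g$ shows that $X'_s$ is linearly connected too. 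By Lemma \ref{l5.2.2} this already disposes of the case where $F$ is algebraically closed, so the entire difficulty is to upgrade this geometric statement to an honest chain of rational curves defined over an \emph{arbitrary} $F$ and joining two \emph{regular} $F$-points. This is exactly where the hypothesis $x_0,x_1\in X'_{\reg,s}(F)$ must enter.

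For the descent over $F$ I would exploit the regular local rings $\sO_{X',x_i}$ in the manner of the proof of Lemma \ref{l6}. A regular system of parameters at $x_i$ produces a flag of regular local rings and an associated chain of rank-one discrete valuations, hence a composite place of $F(X')$ with centre $x_i$ and residue field exactly $\kappa(x_i)=F$, together with a finite chain of specialisations inside $X'$ starting at $x_i$. Because the $x_i$ are regular, Lemma \ref{l6} also lets us lift them to $F$-points $x_i''\in X''_s(F)$ over $x_i$ (the valuation just built has, by properness and Lemma \ref{l4}, a centre on $X''$ mapping to $x_i$), so that one may always compute inside the projective model where Theorem \ref{chow} applies. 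Feeding the valuative criterion of properness into the flag, exactly as in the Madore-type argument recorded in Corollary \ref{cspec} and Remark \ref{r7.3}, one realises the $R$-equivalence class of each $x_i$ over $F$ as built up from direct $R$-equivalences produced over the successive residue situations; connecting both $x_0$ and $x_1$ to a common reference point lying over the generic point of a component of $X'_s$ then yields $x_0\sim_R x_1$ in $X'_s(F)$.

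The main obstacle is precisely this $F$-rational descent, carried out \emph{without} resolution of singularities and in arbitrary characteristic: one must manufacture rational curves defined over $F$ and lying inside the possibly singular fibre $X'_s$, not merely over $\bar F$. Two points need care. First, the Chow-lemma modification $g\colon X''\to X'$ need not be an isomorphism over $X'_{\reg}$, so the conclusion is transported back to the prescribed regular points only through the surjectivity of $X''_s\to X'_s$ and the liftings above, never by naive base change. Second, one must check that the valuations arising from the regular flag genuinely have centre in the special fibre with residue field equal to $F$ (this is exactly what regularity of $x_i$ guarantees and what a singular point would fail), and that the induction is set up so that each intermediate step again takes place over a \emph{regular} local base, allowing Theorem \ref{chow} to be reapplied. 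I expect all the genuine content to lie in this descent step, the geometric skeleton being furnished formally by Chow's theorem.
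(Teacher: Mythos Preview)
Your proposal correctly isolates the difficulty---passing from linear connectedness over $\bar F$ to $R$-equivalence over an arbitrary $F$ at the two regular points---but the descent step you sketch does not go through. You declare an induction on $\dim A$ but never invoke the inductive hypothesis; and the ``Madore-type argument'' you cite runs in the wrong direction: specialisation of $R$-equivalence (Corollary \ref{cspec}, Remark \ref{r7.3}) transports $R$-equivalence \emph{from} a generic situation \emph{to} a special one, whereas here you must manufacture rational curves inside the special fibre $X'_s$, over $F$, connecting $x_0$ and $x_1$. The flag of valuations from Lemma \ref{l6} gives a place $F(X')\tto F$ centred at $x_i$, but it does not by itself produce any curve in $X'_s$, nor any link between the two points. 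The phrase ``connecting both $x_0$ and $x_1$ to a common reference point lying over the generic point of a component of $X'_s$'' is where the argument evaporates: that generic point is not an $F$-point, and no mechanism is offered to build $F$-rational chains to it. It is worth noting that Gabber's letter itself opens by remarking that an approach ``by joining centers of divisorial valuations has a gap in the imperfect residue field case''---this is essentially the route you are attempting.

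The actual proof (Appendix \ref{gabber}) is quite different and more constructive. After reducing to $F/k$ finite, one finds through each regular point $x_i$ a regular one-dimensional closed subscheme $C_i$ of $\Spec\sO_{X',x_i}$ whose generic point lands in the isomorphism locus of $X'\to X$, then takes a finite flat cover $D_i\to C_i$ which is $\Spec F$ over the closed point. The pushout $D_0\cup_{\Spec F} D_1$ is embedded as a closed subscheme of a \emph{two-dimensional} regular local $A$-scheme $Y$. The rational map $Y\ttto X'$ is then resolved by a sequence of point blow-ups $Y'\to Y$ (Lipman's resolution for surfaces, valid in all characteristics), so that $x_0,x_1$ become images of $F$-points of $Y'_s$. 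Since $Y'\to Y$ is a composition of blow-ups at regular closed points, its fibres are projective spaces and any two $F$-points of $Y'_s$ are trivially $R$-equivalent; pushing forward to $X'_s$ gives the result. The engine is thus not Chow's connectedness theorem plus a valuation-theoretic descent, but a reduction to dimension $2$ via local arcs together with the unconditional resolution of surfaces.
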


The proof  I tried to sketch by joining centers of divisorial valuations has
a gap in the imperfect residue field case. It is easier to adapt the proof
by deformation of local arcs.

(1) If $Y'\to Y$ is proper surjective map between separated $k$-schemes
of finite type  whose fibers are
projective spaces then for every $F/k$, $Y'(F)/R\to Y(F)/R$ is bijective. In
particular the theorem holds if $X'$ is obtained from $X$ by a sequence of
blow-ups with regular centers.

(2) If $A$ is a regular local ring of dimension $>1$ with maximal ideal $\fm$,
$U$ an
open non empty in $\Spec(A)$, then there is $f \in \fm-\fm^2$ s.t. the generic
point
of $V(f)$ is in $U$.

This is because $U$ omits only a finite number of height $1$ primes and
there
are infinitely many possibilities for $V(f)$, e.g. $V(x-y^i)$ where $x,y$ is
a
part of a regular system of parameters.

Inductively we get that there is $P  \in U$ s.t. $A/P$ is regular
$1$-dimensional.

(3) If $A$ is a regular local ring and $P,P'$  different prime ideals with
$A/P$
and $A/P'$ regular one dimensional, then there is a prime ideal $Q \subset P
\cap P'$ with $A/Q$ regular $2$-dimensional.

Indeed let $x_1,\dots,x_n$ be a minimal system of generators of $P$; their
images
in
$A/P'$ generate a principal ideal; we may assume this ideal is generated by
the
image of $x_1$, and then we can substract some multiples of $x_1$ from
$x_2,\dots,x_n$ so that the images of $x_2,\dots,x_n$ are $0$; take
$Q=(x_2,\dots,x_n)$.

To prove the theorem we may assume $F$ is a finitely generated extension of
$k$, so $F$ is a finite extension of a purely transcendental extension $k'$
of
$k$. We replace $A$ by the local ring at the generic point of the special
fiber of
an affine space over $A$ that has residue field $k'$. So we reduce to $F/k$
finite.
Let $x,y$ be $F$-points of $X'_s$ centered at closed points $a,b$ at which
$X'$ is
regular. Let $U$ be dense open of $X$ above which $X'\to X$ is an
isomorphism. Let
$X'(a),X'(b)$ be the local schemes (Spec of the local rings at $a$ and $b$).
There
are regular one dimensional closed subschemes
$$C \subset X'(a), C' \subset X'(b)$$
whose generic points map to $U$.

By EGA $0_{III}$ 10.3 there are finite flat $D\to C$, $D'\to C'$ which are
$Spec(F)$ over the closed points of $C,C'$. Then $D,D'$ are Spec's of DVRs
essentially of finite type over $A$ (localization of finite type
$A$-algebras). We
form the pushout of $D\leftarrow Spec(F)\to D'$, which is Spec of a fibered
product ring, which by some algebraic exercise is still an $A$-algebra
essentially
of finite type. The pushout can be embedded as a closed subscheme in Spec of
a
local ring of an affine space over $A$ and then by (3) in  some $Y$ a
$2$-dimensional local regular $A$-scheme essentially of finite type. Now
$D,D'$
are subschemes of $Y$. We have a rational map $Y\to X'$ defined on the
inverse
image of $U$ and in particular at the generic points of $D$ and $D'$. By
e.g. Theorem 26.1 in Lipman's paper on rational singularities (Publ. IHES 36)
there is
$Y'\to Y$ obtained as a succession of blow-ups at closed points s.t. the
rational
map gives a morphism $Y'\to X'$. Then $x,y$ are images of $F$-points of $Y'$
(closed points of the proper transforms of $D,D'$),and by (1) any two
$F$-points
of the special fiber of $Y'\to Y$ are
$R$-equivalent.
\medskip

\enlargethispage*{20pt}

Sincerely,

\hfill Ofer Gabber

\end{document}